\renewcommand{\Re}{\mathop{\rm Re}\nolimits}
\renewcommand{\Im}{\mathop{\rm Im}\nolimits}
\theoremstyle{plain}
\newtheorem{theorem}{Theorem}[section]
\newtheorem{lemma}[theorem]{Lemma}
\newtheorem{proposition}[theorem]{Proposition}
\theoremstyle{definition}
\theoremstyle{remark}
\newtheorem{remark}[theorem]{Remark}
\newtheorem{claim}[theorem]{Claim}
\newcommand{\R}{{\mathbb R}}
\newcommand{\Z}{{\mathbb Z}}
\newcommand{\N}{{\mathbb N}}
\def\im{{\rm i}}
\newcommand{\C}{\mathbb{C}}
\def\({\left(}
\def\){\right)}
\def\[{\left[}
\def\]{\right]}
\def\<{\left\langle}
\def\>{\right\rangle}
\numberwithin{equation}{section}
\begin{document}

\title{Revisiting asymptotic stability of solitons of nonlinear Schr\"odinger equations via refined profile method}

\author{Scipio Cuccagna, Masaya Maeda}
\maketitle

\begin{abstract}
In this paper, we give an alternative proof for the asymptotic stability of solitons for nonlinear Schr\"odinger equations with internal modes.
The novel idea is to use ``refined profiles" developed by the authors for the analysis of small bound states.
By this new strategy, we able to avoid the normal forms.
Further, we can track the functions appearing in the Fermi Golden Rule hypothesis.
\end{abstract}

\section{Introduction}

In this paper  we revisit a theorem on the asymptotic stability of ground states of the nonlinear Schr\"odinger equations (NLS), see \cite{Cuccagna11CMP,Cuccagnatrans,Bambusi13CMPas},
giving a novel and much simplified proof, thanks to notion of "Refined Profile", which allows to avoid the normal form arguments in the older papers.

To set up the problem,
 we consider   the scalar NLS,
 \begin{align}\label{scalarNLS}
 \im \partial_t u = -\Delta u + g(|u|^2)u \text{  with } u(t,x):\R^{1+3} \to {\C},
 \end{align}
 where $g \in C^\infty (\R,\R)$ with $g(0)=0$ satisfies the growth condition:
 \begin{align}\label{eq:ggrowth}
 \ \forall n=0,\cdots,4,\ \exists C_n>0,\  |g^{(n)}(s)|\leq  C_n s^{2-n}.
 \end{align}
 NLS \eqref{scalarNLS} under these conditions is locally well-posed in $H^1(\R^3, \C )$, see e.g.\ Theorem 5.5 of \cite{LPBook}.

 We will assume the existence of ground states.
 In particular, we  assume    existence of an open interval $\mathcal{O}\subset (0,\infty)$ and of a  map
   \begin{align}\label{eq:mapgs}
 \  \omega \mapsto \varphi_\omega  \in C^\infty (\mathcal{O}, H^1_{\mathrm{rad}}\cap L^\infty(\R^3, \C ) )  \text{ , }    H^1_{\mathrm{rad}}(\R^3, \C ):=\{u\in H^1(\R^3, \C )\ |\ u (x) \equiv u(|x|) \} ,
 \end{align}
 where $\varphi_{\omega}$    a ground state, i.e.\  it satisfies
 \begin{align}\label{eq:sp}
 -\Delta \varphi_\omega + \omega \varphi_\omega + g(\varphi_\omega ^2) \varphi_\omega=0   \text{   and $\varphi_\omega(x)>0$ for all $x\in \R^3$}.
 \end{align}
 For a very general existence result, see \cite{BL83ARMA}.

 We fix $\omega_*\in \mathcal{O}$ and assume the following two hypotheses:
 \begin{itemize}
 	\item[(H1)] $L_{\omega_*,+}$ has exactly one negative eigenvalue and $\ker \left.L_{\omega_*,+}\right|_{L^2_{\mathrm{rad}}}=\{0\}$,   where $$L_{\omega,+}:=-\Delta + \omega + g(\varphi_{\omega}^2)+2g'(\varphi_\omega^2)\varphi_\omega^2;$$
 	\item[(H2)] $ \left.\frac{d}{d\omega}\right|_{\omega=\omega_*}  \| \varphi_\omega \| _{L^2 (\R ^3)} ^2   >0$.
 \end{itemize}

\begin{remark}
By (H1), we have $\mathrm{ker}L_{\omega_*,+}=\{\partial_{x_l}\varphi\ |\ l=1,2,3\}$, see \cite{Weinstein85, CGNT07SIMA}.
\end{remark}

 \begin{remark}
 Both conditions (H1) and (H2) hold  for $\omega$ near $\omega_*$.
  In the following, we will restrict $\mathcal{O}$ so that for all $\omega\in \mathcal{O}$, assumption (H1) and (H2) hold.
  \end{remark}

 The second condition in (H1) is the so-called nondegeneracy condition, for sufficient conditions that insure it, see \cite{ASW18AMS,LN20CVPDE} and reference therein.
 The first condition in (H1) holds when $\varphi_{ \omega}$ is obtained by variational arguments, see Proposition B.1 of \cite{FGJS04CMP}.

 The condition (H1) and the Vakhitov-Kolokolov condition (H2) are standard sufficient conditions to ensure the orbital stability of $e^{\im \omega t} \varphi_\omega$ for $\omega=\omega_*$.

 \begin{proposition}[Orbital stability]\label{prop:os}
 There exist $\epsilon_0>0$ and $C>0$ s.t.\ if $
 \|u(0)-\varphi_\omega\|_{H^1}<\epsilon_0$, then
 \begin{align}
 \sup_{t\in\R}\inf_{\theta\in \R,y\in \R^3}\|u(t)-e^{\im \theta}\varphi_\omega(\cdot-y)\|_{H^1}\leq C \|u(0)-\varphi_\omega\|_{H^1},
 \end{align}
 where $u(t)$ is the solution of \eqref{scalarNLS}.
 \end{proposition}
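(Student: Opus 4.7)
The plan is to apply the classical Weinstein / Grillakis--Shatah--Strauss Lyapunov argument, tailored to the phase and translation symmetries of \eqref{scalarNLS}. I would first record the two conserved quantities
\begin{equation*}
Q(u)=\tfrac12\|u\|_{L^2}^2,\qquad E(u)=\tfrac12\|\nabla u\|_{L^2}^2+\tfrac12\int_{\R^3}G(|u|^2)\,dx,\qquad G(s):=\int_0^s g(\tau)\,d\tau,
\end{equation*}
both of class $C^2$ on $H^1$ by \eqref{eq:ggrowth}. Equation \eqref{eq:sp} identifies $\varphi_\omega$ as a critical point of the action $S_\omega:=E+\omega Q$, and after writing $v=v_1+\im v_2$ with $v_1,v_2$ real the Hessian $S_\omega''(\varphi_\omega)$ becomes block diagonal, with blocks $L_{\omega,+}$ acting on $v_1$ and $L_{\omega,-}:=-\Delta+\omega+g(\varphi_\omega^2)$ acting on $v_2$.

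A standard continuity/bootstrap argument based on $H^1$ local well-posedness reduces the proposition to an a priori bound on any time interval on which the modulated distance is small. On such an interval I would first use (H2) and the implicit function theorem to select a $C^1$ function $\omega(t)$ close to $\omega_*$ with $Q(\varphi_{\omega(t)})=Q(u(t))$, and then pick $(\theta(t),y(t))\in\R\times\R^3$ by minimizing $\|e^{-\im\theta}u(t,\cdot+y)-\varphi_{\omega(t)}\|_{H^1}$. This produces a decomposition $u(t,x)=e^{\im\theta(t)}\bigl(\varphi_{\omega(t)}(x-y(t))+v(t,x-y(t))\bigr)$ together with the orthogonality conditions $\langle\varphi_{\omega(t)},v_2(t)\rangle=0$ and $\langle\partial_{x_l}\varphi_{\omega(t)},v_1(t)\rangle=0$ for $l=1,2,3$; expanding the mass constraint in addition yields $\langle\varphi_{\omega(t)},v_1(t)\rangle=-\tfrac12\|v(t)\|_{L^2}^2$.

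The core step is the coercivity estimate $\langle L_{\omega,+}v_1,v_1\rangle+\langle L_{\omega,-}v_2,v_2\rangle\geq c_0\|v\|_{H^1}^2$ under these (near-)orthogonality conditions. Since $L_{\omega,-}\geq 0$ with kernel $\R\varphi_\omega$, the phase choice handles the $v_2$ contribution directly. For $L_{\omega,+}$ I would invoke Weinstein's spectral lemma: by (H1) it has a single negative eigenvalue, while differentiating \eqref{eq:sp} in $\omega$ gives $L_{\omega,+}\partial_\omega\varphi_\omega=-\varphi_\omega$, so that (H2) is equivalent to $\langle L_{\omega,+}^{-1}\varphi_\omega,\varphi_\omega\rangle=-\tfrac12\partial_\omega\|\varphi_\omega\|_{L^2}^2<0$, which is precisely the criterion that ensures coercivity of $L_{\omega,+}$ on $\{\varphi_\omega\}^\perp\cap\{\partial_{x_l}\varphi_\omega : l=1,2,3\}^\perp$. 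Combining this with the Taylor expansion
\begin{equation*}
S_{\omega(t)}(u(t))-S_{\omega(t)}(\varphi_{\omega(t)})=\tfrac12\langle S_{\omega(t)}''(\varphi_{\omega(t)})v,v\rangle+O(\|v\|_{H^1}^3),
\end{equation*}
the conservation of $E$ and $Q$, and the smooth dependence of $S_\omega(\varphi_\omega)$ on $\omega$, the left-hand side is $O(\|u(0)-\varphi_{\omega_*}\|_{H^1}^2)$, and one obtains $\|v(t)\|_{H^1}\lesssim\|u(0)-\varphi_{\omega_*}\|_{H^1}$ uniformly in $t$, which closes the bootstrap.

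The main technical obstacle is the spectral coercivity step: it is the only place where hypotheses (H1) and (H2) are really used in tandem, and the translation directions $\partial_{x_l}\varphi_\omega$ require a bit of extra care beyond Weinstein's original radial argument. They can nevertheless be absorbed by the same Lagrange-multiplier/minimization technique, since they lie in $\ker L_{\omega,+}$ by (H1). The remaining ingredients---modulation, Taylor expansion, and the bootstrap---are routine consequences of the symmetries and conservation laws of \eqref{scalarNLS}.
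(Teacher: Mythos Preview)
Your proposal is correct and is precisely the Grillakis--Shatah--Strauss Lyapunov argument that the paper invokes: the paper's own proof consists solely of the reference ``See Theorem 3.4 of \cite{GSS}'', and your sketch is exactly a fleshed-out version of that result specialized to the present setting.
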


 \begin{proof}
 See Theorem 3.4 of  \cite{GSS}.
 \end{proof}

\begin{remark}
The number of negative eigenvalues of $L_{\omega_*,+}$ is called Morse index.
 In Remark 1   \cite{Bambusi13CMPas}, it  is stated that Orbital Stability follows from (H1) and (H2) without assuming   Morse index    $1$, by quoting \cite{FGJS04CMP} which claims  that   Morse index   $1$   follows from the second condition of (H1), (H2) and  Theorem 3   \cite{GSS}.
 However, Theorem 3    \cite{GSS}  is proved  assuming that Morse index is $1$. So for Orbital Stability it seems that we need  the hypotheses as we state them here.
 We are not aware of any example of positive bound states with Morse index more than $2$, satisfying the second  condition in (H1) and (H2).
 However, for NLS with potential and  for  systems of NLS there are such examples, see \cite{M10}.
 Also, there exists a positive bound state with Morse index $2$ (although it is not clear if this bound state satisfies (H2)), see \cite{DdPG13PLMS}.
 \end{remark}

   The aim of this paper is to prove a stronger stability property,  the asymptotic stability, which states that all solutions near the ground state $\varphi_{ \omega}$ converge to $\varphi_{ \omega_+}$ for some $\omega_+$ near $\omega_*$ modulo scattering waves.
   Postponing assumptions (H3)--(H7), the main theorem of this paper is as follows, already known under slightly stronger assumption:

      \begin{theorem}\label{thm:main}
      	Assume (H1)--(H7) hold, where (H3)--(H7) are given below.
      	Then, there exist $\epsilon _0>0$ and $C>0$ s.t.\ for all $u_0\in H^1$ satisfying $\|u_0-\varphi_{\omega_*}\|_{H^1} <\epsilon _0$, there exist $C^1(\R )$ functions $\theta,\omega,y,v$ and  there are  $\eta_+\in H^1(\R^ 3)$, $v_+\in \R^3$ and $\omega_+\in \mathcal O$ s.t.\
      	\begin{align}
      	\lim_{t\to \infty}\|u(t)-e^{\im \theta(t)}e^{ \frac{\im}{2}v(t)\cdot x}\varphi_{\omega(t)}(\cdot-y(t)) - e^{\im t \Delta}\eta_+ \|_{H^1}=0,\label{thm:main1} \\
      	\lim_{t\to \infty}|\omega(t)-\omega_+|=\lim_{t\to \infty}|v(t)-v_+|=0,\label{thm:main2}
      	\end{align}
      where $u(t)$ is the solution of \eqref{scalarNLS} satisfying $u(0)=u_0$, and
      	\begin{align}
      	\|\eta_+\|_{H^1}+|v_+|+|\omega_+-\omega_*|\leq C\|u_0-\varphi_{\omega_*}\|_{H^1}.\label{thm:main3}
      	\end{align}
      \end{theorem}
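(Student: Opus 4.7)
\emph{Setup via modulation.} By Proposition~\ref{prop:os} and hypothesis (H1), a standard implicit function theorem argument applied to the symplectic orthogonality conditions produces $C^1$ parameters $\theta,\omega,y,v$ such that
\begin{equation*}
u(t,x)=e^{\im\theta(t)}e^{\frac{\im}{2}v(t)\cdot(x-y(t))}\bigl(\varphi_{\omega(t)}+r(t,\cdot)\bigr)(x-y(t)),
\end{equation*}
where the remainder $r$ is orthogonal to the generators of the symmetries. Anticipating that (H3)--(H7) postulate a simple internal-mode eigenvalue of the linearization at $\varphi_{\omega_*}$, absence of resonances/embedded eigenvalues, and a Fermi Golden Rule (FGR) nondegeneracy, I would split $r(t)=z(t)\xi_{\omega(t)}+\eta(t)$ with $\eta$ in the continuous spectral subspace. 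Orbital stability yields $\|\eta(0)\|_{H^1}+|z(0)|\lesssim\epsilon_0$.

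\emph{Refined profile.} Rather than working with the bare ground state, I would upgrade the ansatz to
\begin{equation*}
u(t,x)=e^{\im\theta(t)}e^{\frac{\im}{2}v(t)\cdot(x-y(t))}\bigl(\Phi[\omega(t),z(t)]+\tilde\eta(t,\cdot)\bigr)(x-y(t)),
\end{equation*}
with $\Phi[\omega,z]=\varphi_\omega+\sum_{1\le j+k\le N}z^j\bar{z}^k A_{j,k}(\omega)+\cdots$, the corrections $A_{j,k}(\omega)$ being defined by solving non-degenerate linear elliptic problems so that $\Phi$ absorbs the non-resonant $z$-self-interactions up to the FGR order. This is the algebraic substitute for the iterated Birkhoff normal forms of \cite{Cuccagna11CMP,Bambusi13CMPas}: rather than transforming variables canonically to kill bad monomials, we pre-subtract them in the ansatz.

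\emph{Effective system and FGR.} Substituting into \eqref{scalarNLS} and projecting on the spectral components would give: (i) modulation identities in which $\dot\omega,\dot v,\dot y-v,\dot\theta-\omega$ are quadratic in $(z,\tilde\eta)$; (ii) an ODE of the form $\im\dot z=\lambda(\omega)z+\alpha(\omega)|z|^{2N}z+\langle G[\omega,\Phi],\tilde\eta\rangle+\text{h.o.t.}$; (iii) a Schr\"odinger-type equation for $\tilde\eta$ with inhomogeneity of size $|z|^{N+1}$. Duhamel for $\tilde\eta$ substituted into the $z$-equation, combined with the limiting absorption principle at the embedded energy $N\lambda(\omega_*)$, extracts a dissipative term $-\im\gamma|z|^{2N}z$ with $\gamma>0$ by (H7). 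Since $G[\omega,\Phi]$ is an explicit function of $\Phi$, this approach realizes the stated aim of tracking the functions appearing in the FGR hypothesis.

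\emph{Closing the bootstrap and main obstacle.} I would combine Strichartz and Kato-type weighted smoothing for the linearized evolution (available under the spectral part of (H3)--(H6)) with the FGR-induced damping to close a bootstrap
\begin{equation*}
\|z\|_{L^{2N+2}(\R_+)}+\|\langle x\rangle^{-\sigma}\tilde\eta\|_{L^2_tL^2_x}+\|\tilde\eta\|_{\mathrm{Strichartz}}\lesssim\epsilon_0.
\end{equation*}
This renders $\dot\omega,\dot v\in L^1_t$, giving limits $\omega_+$ and $v_+$ satisfying \eqref{thm:main2}--\eqref{thm:main3}, forces $z(t)\to 0$, and via Cook's method applied to $\tilde\eta$ produces the asymptotic profile $\eta_+\in H^1$ realizing \eqref{thm:main1}. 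The principal obstacle is the construction of $\Phi[\omega,z]$ itself: one must verify that the linear problems defining the $A_{j,k}$ are solvable in suitable weighted spaces and that the resulting inhomogeneity of the $\tilde\eta$-equation has precisely the spectral structure needed for smoothing estimates. Extracting the sign of the damping coefficient $\gamma$ directly from the refined profile, without passing through several canonical transformations as in the earlier works, is the delicate algebraic computation that drives the whole scheme and pinpoints the content of assumption (H7).
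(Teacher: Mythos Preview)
Your overall architecture—refined profile in place of normal forms, a bootstrap coupling Strichartz/smoothing control of the radiation with an $L^2$-type bound on the resonant monomials, and FGR-induced damping—matches the paper's strategy, and you correctly identify the construction of $\Phi[\omega,z]$ (the paper's $\varphi[\omega,\mathbf{z}]$) as the main algebraic input. Two tactical points, however, diverge from the paper, and one of them is an actual gap in your outline.

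\textbf{FGR extraction.} You propose to substitute the Duhamel expression for $\tilde\eta$ back into the $z$-ODE and read off a damping coefficient $-\im\gamma|z|^{2N}z$. The paper does \emph{not} do this. Instead it differentiates the localized action $E(\varphi[\Theta])+\omega_*Q_0(\varphi[\Theta])$ in time; after the refined profile the surviving main term is $\langle\eta,D_{\mathbf z}\mathcal R(\im\boldsymbol\lambda\mathbf z)\rangle$, and the decomposition $\widetilde\zeta=Z+g$ with $Z_{\mathbf m}=-(\mathcal H_{\omega_*}-\lambda(\omega_*,\mathbf m)-\im 0)^{-1}\sigma_3\mathfrak G_{\mathbf m}$ turns this, via the Plemelj formula, directly into the negative-definite quadratic form on the sphere $|\xi|^2=r_k-\omega_*$ that (H7) renders coercive. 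This bypasses any manipulation of the $z$-ODE and makes the appearance of $\mathcal F(W^*\mathfrak G_{\mathbf m})_+$ in (H7) completely transparent.

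\textbf{Convergence of $\omega(t)$ and $v(t)$.} Your claim that the bootstrap ``renders $\dot\omega,\dot v\in L^1_t$'' is not justified by the quantities you control. After the refined profile, the modulation equations give $\widetilde\omega_{\mathcal R},\widetilde{\mathbf v}_{\mathcal R}=O\bigl(\sum_{\mathbf m\in\mathbf R_{\min}}|\mathbf z^{\mathbf m}|\bigr)$, and the bootstrap only places $\mathbf z^{\mathbf m}$ in $L^2_t$; likewise $\dot\Theta-\widetilde\Theta\in L^2_t$. So you only get $\dot\omega,\dot v\in L^2_t$, which does not give limits. The paper closes this differently: once scattering of $\eta$ is known, conservation of $Q_0(u)=\tfrac12\|u\|_{L^2}^2$ and $Q_l(u)=-\tfrac12\langle\im\partial_{x_l}u,u\rangle$ forces $Q_l(\varphi[\Theta(t)])$ to converge, and since $\mathbf z(t)\to 0$ this yields the limits $\omega_+$ and $v_+$. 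You should replace the $L^1$-integrability step by this conservation-law argument.

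Finally, note that the paper allows $N\ge 1$ internal modes with the full multi-index combinatorics $\mathbf m\in\mathbf R_{\min}$; your reduction to a single mode with monomials $z^j\bar z^k$ is a genuine simplification of the setting, not merely of the notation.
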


%
      The key novelty here is the fact that we avoid the normal forms in the context of the Fermi Golden Rule  (FGR).
      This is a significant advance because
      the FGR is a key  mechanism
      in  radiation induced dissipation. Classical oscillating mechanisms, like the oscillations of a soliton trapped by
      a potential, which in certain asymptotic regimes are known to last for long times, see for example \cite{JLFGS06AHP}, are
      not expected to last for ever. Similar oscillatory motions, in correspondence to critical points of the function $\omega \to \| \varphi _\omega \| _{L^2}$, which appear naturally in the case a saturated versions of the $L^2$ critical pure power focusing  NLS, see \cite{BG01MCS,MRS10JNS}, analyzed rigorously in
      \cite{CM20JDE}, are not expected to hold for ever.  Other related examples of complicated oscillatory patterns, lasting over long times
      are the complicated patterns near  branchings of the maps $\omega \to \varphi _\omega$ considered  in \cite{MW10DCDS}, which again are
      expected to be transient.      In analogy to  the  role of the FGR in the stabilization phenomena observed in \cite{BP2}, \cite{SW3}--\cite{TY1}, \cite{Cuccagna11CMP,Cuccagnatrans} and many other papers, some of whom referenced in the survey \cite{CM21DCDS},
      what breaks the oscillations should be an exchange of energy between discrete and continuous modes of the solutions. In particular,
      in each of   \cite{BG01MCS,JLFGS06AHP,MRS10JNS,MW10DCDS} the linearization   $\mathcal{H}_{\omega }$  has a pair of  eigenvalues very close to the origin.  The nonlinear interaction of the corresponding discrete modes with the continuous modes, should be responsible for transient nature of the patterns observed.  The longevity of these patterns is   connected with the  smallness of the eigenvalues of the pair, because
      the nonlinear interaction, which leads to radiation induced dissipation on discrete modes,  is related to the fact that multiples of the eigenvalues are in the continuous spectrum, see the definition of
      resonant multi-indexes under \eqref{def:prec}. In the present paper we avoid the issue of small eigenvalues, see that in (H5) we are assuming $\lambda_j(\omega)>0$ and in particular $\min_{j}\lambda_j(\omega_*)>0$,
      but we expect that the main novel idea of this paper, and of  the previous papers \cite{CM21AnnPDE,CM21}, might have some relevance also in the case
      of small eigenvalues.  This because, in the presence of small eigenvalues,  the problem of simplifying as
      much as possible the search for optimal coordinate systems, where it might be easier to see  the    radiation induced dissipation,
      becomes essential, in view of the large number of steps required in normal forms arguments. Now it turns out that with a well chosen
      "Refined Profile", the resulting coordinates are automatically optimal. This is similar to, and in fact was inspired by,   what happens in the study of the $\log \log$  blow up in the $L^2$ critical NLS,
      see  \cite{perelman01,MR03GAFA,MR4}, where the choice of an   appropriate deformation of the ground states, yields automatically
      to a system
      where the dissipation mechanism is directly available.   The advantage of the Refined Profile, that is of an appropriate deformation of the
      ground states  which incorporates all the discrete coordinates, is that here as well as in  \cite{CM21AnnPDE,CM21}, is that it
      can be defined by an elementary argument. Obviously, the method will have to be tested  to study the transient nature
      of the patterns in \cite{JLFGS06AHP,MRS10JNS,MW10DCDS,CM20JDE}, and in other analogous contexts, and in general to get truly novel results.

   \subsection{Linearized operator and assumption (H3)--(H6)}\label{subsec:Assumptions}
  We use the Pauli matrices
   \begin{equation}
   \label{pauli-matrices}
   \sigma_1=\begin{pmatrix} 0 &
   1 \\
   1 & 0
    \end{pmatrix} \,,
   \quad
   \sigma_2= \begin{pmatrix} 0 &
   -\im \\
    \im  & 0
    \end{pmatrix} \,,
   \quad
   \sigma_3=\begin{pmatrix} 1 &
   0 \\
   0 & -1
    \end{pmatrix}.
   \end{equation}
   For given function $\psi$, we define
   \begin{align}\label{eq:lineariz_gen}
           \mathcal{H}[\omega,\psi]:&=\begin{pmatrix}
         -\Delta + \omega + g(|\psi|^2)+g'(|\psi|^2)|\psi|^2 & g'(|\psi|^2)\psi^2\\
         -g'(|\psi|^2)\overline{\psi^2} &  \Delta - \omega - g(|\psi|^2)- g'(|\psi|^2)|\psi|^2
         \end{pmatrix},
         \end{align}
         and for  $\omega\in \mathcal{O}$, we consider the "linearized operator"
      \begin{align}\label{eq:lineariz}
        \mathcal{H}_\omega:=\mathcal{H}[\omega,\varphi_{ \omega}].
      \end{align}
      \begin{remark}
      Setting $u=e^{\im \omega t}(\varphi_\omega +r)$ and substituting this into \eqref{scalarNLS}, we obtain $$\im \partial_t r = -\Delta r + \omega r + g(\varphi_\omega^2)r + g'(\varphi_{ \omega}^2)\varphi_{ \omega}^2r +g'(\varphi_{ \omega}^2)\varphi_{ \omega}^2\overline{r} +O(r^2).$$
      Since   complex conjugation is not $\C$-linear, it is natural to consider the above matrix form of the linearized operator when considering the spectrum.
      \end{remark}

Under the assumptions  (H1) and (H2), the generalized kernel $ {N}_g(\mathcal{H}_\omega):=\cup_{j=1}^\infty \mathrm{ker} \mathcal{H}_{\omega}^j$ becomes
\begin{align}\label{eq:Ng}
 {N}_g(\mathcal{H}_{\omega })=\mathrm{span}\{ \im\sigma _3 \phi_{\omega }, \partial_\omega \phi_{\omega }, \partial_{x_l}\phi_{\omega }, \im \sigma _3 x_l\phi_{\omega },\ l=1,2,3 \},\ \text{where}\ \phi_\omega=\begin{pmatrix}
 \varphi_{ \omega} \\ \varphi_{ \omega}
 \end{pmatrix}.
\end{align}
\begin{remark}
The inclusion  $\supseteq$ always holds while $\subseteq$  follows from (H1) and (H2), see  \cite{Weinstein85}.
\end{remark}

Under the assumption of (H1) and (H2) (and the fact that $\varphi_\omega$ is positive), one can show $\sigma(\mathcal{H}_{\omega }) \subset \R$ (otherwise the bound state will be unstable contradicting Proposition \ref{prop:os}) and $\sigma_{\mathrm{ess}}(\mathcal{H}_{\omega })=(-\infty,-\omega ] \cup [\omega ,\infty)$, where $\sigma(\mathcal{H}_{\omega }) $ and $\sigma_{\mathrm{ess}}(\mathcal{H}_{\omega })$ are the  spectrum and essential spectrum respectively.
We assume:
\begin{itemize}
	\item [(H3)] $\pm \omega_*$ are not eigenvalues nor resonance of $\mathcal{H}_{\omega_*}$;
	\item[(H4)] $\mathcal{H}_{\omega_*}$ has  no   eigenvalues   in  $(-\infty,-\omega_* ) \cup (\omega _*,\infty)$  (no embedded eigenvalues).
\end{itemize}

\begin{remark}
Assumption (H3)  is   generically true, while we expect assumption (H4)  always to be true.
That is, we conjecture the absence of embedded eigenvalues
with positive Krein signature.
Notice that the Krein signature of such embedded eigenvalues has to be positive when $\varphi_\omega$
is a ground state.
\end{remark}

The spectrum of $\mathcal{H}_\omega$ is symmetric with respect to the imaginary axis.
It is known that there are finitely many eigenvalues with finite total multiplicity, Proposition 2.2 of \cite{CPV}.
Thus, considering the Riesz projection, we see that the projections to the finite dimensional subspaces of discrete components are smooth in $\omega$.
We assume the following:
\begin{itemize}
\item [(H5)]
There exist $N\in \N_0$ and $\lambda_j(\cdot)\in C^\infty(\mathcal{O},\R_+)$ and $\xi_j[\cdot]\in C^\infty(\mathcal{O},L^2(\R))$ for $j=1,\cdots,N$ s.t. $\sigma_d(\mathcal{H}_\omega)=\{0\}\cup\{\pm\lambda_j(\omega),\ j=1,\cdots,N\}$ and $\mathcal{H}_\omega \xi_{j}[\omega]=\lambda_j(\omega)\xi_j[\omega]$.
\end{itemize}
\begin{remark}
Assumption (H5) is satisfied   when $g$ is analytic.
\end{remark}

We write
\begin{align}\label{def:xipm}
\xi_j[\omega]=\begin{pmatrix}
\xi_{j+}[\omega] \\ \xi_{j-}[\omega]
\end{pmatrix} .
\end{align}
From the anticommutative relation $\sigma_1 \mathcal{H}_{\omega}=-\mathcal{H}_{\omega}\sigma_1$, one can see $\sigma_1 \xi_j[\omega]$ is the eigenvector of the eigenvalue $-\lambda_j(\omega)$.
It is possible to take all the  $\xi_{j\pm }[\omega]$  to be $\R$-valued and moreover  normalize so that, for $\delta_{jk}$ is the Kronecker's delta,
\begin{align}\label{eq:krein}
 ( \sigma _3 \xi_j[\omega] , \xi_j[\omega] ) =  \delta _{jk}.
\end{align}
\begin{remark}
The above equality is always true for $j\neq k$, while for $j=k$ it reflects the nontrivial, but easy to prove,  fact that  each eigenvalue
$ \lambda_j(\omega)$  has positive Krein signature (this is a consequence of the fact that  $\varphi_\omega$
is a ground state).
\end{remark}


%

\noindent By standard argument, we know that $\varphi_\omega$ and $\xi_j[\omega]$ decay  exponentially, see \cite{HL07BullLMS}.
Thus, we can show that for all $\omega\in \mathcal{O}$ we have
$\phi_{\omega }, \xi_j[\omega] \in \Sigma$, where for sufficiently large $\sigma>0$, $\Sigma$ is defined by
\begin{align}\label{eq:defsig}
\Sigma:=\{u\in L^2(\R ^3,\C ^2)\ |\ \|u\|_{\Sigma}<\infty\},\ \|u\|_{\Sigma}:=\|    \< x \> ^\sigma  u\|_{H^2}.
\end{align}
The map $\omega\mapsto \xi_{j }[\omega]$ is $C^\infty$ in $\Sigma$ and the same holds for $\varphi_\omega$ too.

In the following, given $\mathbf{x}\in \mathbb{K}^M$ for $\mathbb{K}=\N_0,\R,\C$ and $M\in \N$ with $\mathbf{x}=(x_1,\cdots,x_M)$,
 we set $\|\mathbf{x}\|:=\sum_{n=1}^{M}|x_n|$.
To state further assumptions on the discrete spectrum, we introduce further notation.
For $\mathbf{m}\in \N_0^{2N}$, we write $\mathbf{m}=(\mathbf{m}_+,\mathbf{m}_-)$, where $\mathbf{m}_\pm \in \N_0^N$.
We also set $\overline{\mathbf{m}}=(\mathbf{m}_-,\mathbf{m}_+)$, $\mathbf{e}^{j}=(\delta_{1j},\cdots,\delta_{Nj}) \in \N_0^N$, $\mathbf{e}^{j+}=(\mathbf{e}^j,0)$, $\mathbf{e}^{j-}=\overline{\mathbf{e}^{j+}}$
and
	\begin{align}
	\lambda(\omega,\mathbf{m})=\sum_{j=1}^N\lambda_j(\omega) \(m_{+,j} -m_{-,j}\).
	\end{align}
For $\mathbf{m},\mathbf{m}'\in \N_0^{2N}$, we define
\begin{align}
\mathbf{m}'\preceq \mathbf{m}\ &\Leftrightarrow\ m_{+,j}'+m_{-,j}'\leq m_{+,j}+m_{-,j},\ \text{for\ all\ }j=1,\cdots,N,\nonumber\\
\mathbf{m}'\prec \mathbf{m}\ &\Leftrightarrow\ \mathbf{m}'\preceq \mathbf{m}\ \text{and}\ \|\mathbf{m}'\|<\|\mathbf{m}\|.\label{def:prec}
\end{align}
We define the resonant resp. minimal resonant indices as
\begin{align*}
\mathbf{R}_\omega=\{\mathbf{m}\in \N_0^{2N}\ |\ |\lambda(\omega,\mathbf{m})|>\omega\}\text{   resp. }\mathbf{R}_{\mathrm{min},\omega}=\{\mathbf{m}\in \mathbf{R}_\omega\ |\ \not\exists\mathbf{m}'\in \mathbf{R}_\omega\ s.t.\ \mathbf{m}'\prec \mathbf{m}\}.
\end{align*}
Further,   the set of indices which we will ignore is
\begin{align*}
\mathbf{I}_\omega:=\{\mathbf{m}\in \N_0^{2N}\ |\ \exists\mathbf{m}'\in \mathbf{R}_{\mathrm{min},\omega}\ s.t.\ \mathbf{m}'\prec \mathbf{m}\}.
\end{align*}

We assume the following, on the discrete spectrum.
\begin{itemize}
	\item [(H6)]
	We assume that for $\mathbf{m}_+\in \N_0^N$ with $\|\mathbf{m}_+\|\geq 2$, $\lambda(\omega_*,(\mathbf{m}_+,0))\neq \lambda_j(\omega_*)$ for $j=1,\cdots, N$ and
	\begin{align*}
	\forall \mathbf{m}\in \N_0^{2N}\setminus\mathbf{I}_{\omega_*},\ |\lambda(\omega_* ,\mathbf{m})|\neq \omega_*.
	\end{align*}
\end{itemize}

\begin{remark}
A sufficient condition for (H6) is that 
\begin{align*}
2\leq \|  \mathbf{m}_+\|\leq \omega_*\(\min_{j}\lambda_j(\omega_*)\)^{-1}\ \Rightarrow\
 \lambda(\omega_*,(\mathbf{m}_+,0))\neq  \lambda_1(\omega_*),\cdots,\lambda_N(\omega_*),\ \omega_*.
\end{align*}
\end{remark}

Under (H6), restricting $\mathcal{O}$ if necessary, $\mathbf{R}_{\mathrm{min},\omega}$ and $\mathbf{I}_\omega$ do not depend on $\omega\in \mathcal{O}$.
Thus, we write them $\mathbf{R}_{\mathrm{min}}$ and $\mathbf{I}$ respectively.
We enumerate the set $\{\lambda(\omega_*,\mathbf{m}) |\ \mathbf{m}\in \mathbf{R}_{\mathrm{min}}\}$ as $\{\pm r_k\ |\ k=1,\cdots,M\}$ where $r_k>0$ and set
\begin{align}\label{def:Rmink}
\mathbf{R}_{\mathrm{min},k}=\{\mathbf{m}\in \mathbf{R}_{\mathrm{min}}\ |\ \lambda(\omega_*,\mathbf{m})=r_k\},
\end{align}
and write $\mathbf{R}_{\mathrm{min},k}=\{\mathbf{m}(k,n)\ |\ n=1,\cdots,M_k\}$.
The set of nonresonant indices defined by
\begin{align*}
\mathbf{NR}:= \N_0^{2N}\setminus (\mathbf{R}_{\mathrm{min}}\cup\mathbf{I}).
\end{align*}
Notice that we have
$
\N_0^{2N}\setminus\mathbf{I}=\mathbf{R}_{\mathrm{min}}\cup \mathbf{NR}.
$
We further set
\begin{align*}
\Lambda_{0}:=\{\mathbf{m}\in \mathbf{NR}\setminus\{0\}\ |\ \lambda(\omega_*, \mathbf{m})=0\}\text{   and }\Lambda_{j}:=\{\mathbf{m}\in \mathbf{NR}\ |\ \lambda(\omega_*, \mathbf{m})=\lambda_j(\omega_*)\}.
\end{align*}
Finally, for $\mathbf{z}=(z_1,\cdots,z_N)\in \C^N$, we write
\begin{align*}
\mathbf{z}^{\mathbf{m}}=\mathbf{z}^{\mathbf{m}_+}\overline{\mathbf{z}}^{\mathbf{m}_-},\ \mathbf{m}=(\mathbf{m}_+,\mathbf{m}_-)\in \N_0^{2N},\ \text{where}\ \mathbf{z}^{\mathbf{m}_\pm}=\prod_{j=1}^N z_{j}^{m_{j\pm}}.
\end{align*}

\subsection{Refined profile and Fermi Golden Rule assumption (H7)}

 For a $C^1$ function in $\mathbf{z}$ let  $DF\mathbf{w}=D_{\mathbf{z}}F(\mathbf{z})\mathbf{w}=\left.\frac{d}{d\epsilon}\right|_{\epsilon=0}F(\mathbf{z}+\epsilon\mathbf{w})$.
 Let also  $\nabla_x=(\partial_{x_1},\partial_{x_2},\partial_3)$.

We now introduce the notion of refined profile.


\begin{proposition}\label{prop:rp_pre_galilei}
There exist  $\varphi[\omega,\mathbf{z}]$, $\widetilde{\theta}(\omega,\mathbf{z})$, $\widetilde{\omega}(\omega,\mathbf{z})$, $\widetilde{\mathbf{y}}(\omega,\mathbf{z})$, $\widetilde{\mathbf{v}}(\omega,\mathbf{z})$ and $\widetilde{\mathbf{z}}(\omega,\mathbf{z})$ smoothly defined in the neighborhood of $(\omega_*,0)\in \mathcal{O}\times \C^N$, such that $\varphi[\omega,0]=\varphi_{ \omega}$ and for $\varphi = \varphi[\omega,\mathbf{z}]$,
\begin{align}\label{eq:phi_pre_gali}
\mathcal{R}[\omega,\mathbf{z}]:=-\Delta \varphi + g(|\varphi|^2)\varphi + \widetilde{\theta}\varphi - \im \widetilde{\omega}\partial_{\omega}\varphi + \im \widetilde{\mathbf{y}} \cdot \nabla_x \varphi+\frac{1}{2}\widetilde{\mathbf{ v}}\cdot x \varphi -\im D_{\mathbf{z}}\varphi \widetilde{\mathbf{z}},
\end{align}
can be expanded as
\begin{align}&
\mathcal{R}[\omega,\mathbf{z}]=\sum_{\mathbf{m} \in \mathbf{R}_{\mathrm{min}}}\mathbf{z}^{\mathbf{m}}G_{\mathbf{m}}+\mathcal{R}_1[\omega,\mathbf{z}] \text{   with }G_{\mathbf{m}}\in \Sigma \text{   and}  \label{expestR}
\\& \label{estR}
\| \mathcal{R}_1\|_{\Sigma}\lesssim  \(|\omega-\omega_*|+\|\mathbf{z}\|\)\sum_{\mathbf{m}\in \mathbf{R}_{\mathrm{min}}}|\mathbf{z}^{\mathbf{m}}|.
\end{align}
Furthermore, $\mathcal{R}[\omega,\mathbf{z}]$ satisfies the following orthogonality conditions, for $\< f,g\> :=\Re \int f \overline{g }dx$,
\begin{align}\label{R:orth} &
\<\mathcal{R}[\omega,\mathbf{z}],\im\varphi [\omega,\mathbf{z}]\>=\<\mathcal{R}[\omega,\mathbf{z}], \partial_{\omega}\varphi [\omega,\mathbf{z}]\>=\<\mathcal{R}[\omega,\mathbf{z}],  \partial_{x_l}\varphi[\omega,\mathbf{z}]\>  =  \<\mathcal{R}[\omega,\mathbf{z}],\im x_l\varphi[\omega,\mathbf{z}]\> \\& =\<\mathcal{R}[\omega,\mathbf{z}],\partial_{z_{jA}}\varphi[\omega,\mathbf{z}]\> \equiv 0,\text{   for all $l=1,2,3$, $j=1,\cdots,N$, and $A=R,I$,}\nonumber
\end{align}
where $z_{jR}=\Re z_j$  and    $z_{jI}=\Im z_j$.

\end{proposition}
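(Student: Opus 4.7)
The plan is to construct $\varphi[\omega,\mathbf{z}]$ together with the modulation scalars $\widetilde{\theta},\widetilde{\omega},\widetilde{\mathbf{y}},\widetilde{\mathbf{v}},\widetilde{\mathbf{z}}$ as Taylor polynomials in $\mathbf{z}$ of some total degree $K$ strictly larger than $\min_{\mathbf{m}\in\mathbf{R}_{\mathrm{min}}}\|\mathbf{m}\|$, solving the Taylor expansion of $\mathcal{R}[\omega,\mathbf{z}]$ monomial by monomial and stopping at the resonant indices. At $\mathbf{z}=0$ the obvious normalization is $\varphi[\omega,0]=\varphi_{\omega}$, $\widetilde{\theta}(\omega,0)=\omega$ and $\widetilde{\omega}=\widetilde{\mathbf{y}}=\widetilde{\mathbf{v}}=\widetilde{\mathbf{z}}=0$, which makes $\mathcal{R}[\omega,0]=0$ by \eqref{eq:sp}. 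At linear order in $\mathbf{z}$ I would set $\widetilde{z}_{j}=-\im\lambda_{j}(\omega)z_{j}+O(\|\mathbf{z}\|^{2})$ and take the coefficients of $z_{j}$ and $\overline{z_{j}}$ in $\varphi[\omega,\mathbf{z}]$ to be $\xi_{j+}[\omega]$ and $\xi_{j-}[\omega]$ respectively; after pairing $\mathcal{R}$ with its complex conjugate $\overline{\mathcal{R}}$ as a two-component system, the linear-in-$\mathbf{z}$ part then vanishes identically by the eigenvalue equation $\mathcal{H}_{\omega}\xi_{j}[\omega]=\lambda_{j}(\omega)\xi_{j}[\omega]$.

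Next, by induction on $k=\|\mathbf{m}\|$, suppose that all coefficients of order $<k$ have been selected so that the expansion of $\mathcal{R}$, modulo already-set-aside resonant sources, starts at order $k$. Grouping the order-$\mathbf{z}^{\mathbf{m}}$ coefficient of $\mathcal{R}$ together with the order-$\mathbf{z}^{\overline{\mathbf{m}}}$ coefficient of $\overline{\mathcal{R}}$ yields a two-vector obeying
\[
\bigl(\mathcal{H}_{\omega}-\lambda(\omega,\mathbf{m})\bigr)\Phi_{\mathbf{m}}[\omega]=F_{\mathbf{m}}[\omega]-(\text{linear contribution of the order-}\mathbf{z}^{\mathbf{m}}\text{ modulations}),
\]
with $F_{\mathbf{m}}$ produced from the Taylor expansions of $g(|\varphi|^{2})\varphi$, $\widetilde{\theta}\varphi$, $\widetilde{\omega}\partial_{\omega}\varphi$, $\widetilde{\mathbf{y}}\cdot\nabla_{x}\varphi$, $\widetilde{\mathbf{v}}\cdot x\varphi$ and $D_{\mathbf{z}}\varphi\,\widetilde{\mathbf{z}}$ using only lower-order data. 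The cases split according to $\mathbf{m}$. If $\mathbf{m}\in\Lambda_{0}$, the eight real modulations $(\widetilde{\theta}_{\mathbf{m}},\widetilde{\omega}_{\mathbf{m}},\widetilde{\mathbf{y}}_{\mathbf{m}},\widetilde{\mathbf{v}}_{\mathbf{m}})$ at this order are used to cancel the projection of $F_{\mathbf{m}}$ onto the eight-dimensional $N_{g}(\mathcal{H}_{\omega})$ of \eqref{eq:Ng}, invertibility being the Vakhitov--Kolokolov-type pairing enforced by (H2). If $\mathbf{m}\in\Lambda_{j}$, the complex modulation coefficient $\widetilde{z}_{j,\mathbf{m}}$ is used to cancel the projection of $F_{\mathbf{m}}$ onto $\xi_{j}[\omega]$, with invertibility from \eqref{eq:krein}. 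If $\mathbf{m}\in\mathbf{NR}\setminus(\Lambda_{0}\cup\bigcup_{j}\Lambda_{j})$, then $\mathcal{H}_{\omega}-\lambda(\omega,\mathbf{m})$ is invertible on the symplectic complement of the discrete subspace and $\Phi_{\mathbf{m}}$ is uniquely determined. Finally, if $\mathbf{m}\in\mathbf{R}_{\mathrm{min}}$, stop: set $\varphi_{\mathbf{m}}=0$ and record $G_{\mathbf{m}}:=F_{\mathbf{m}}$. Since every $\mathbf{m}\in\mathbf{I}$ satisfies $\mathbf{m}'\prec\mathbf{m}$ for some $\mathbf{m}'\in\mathbf{R}_{\mathrm{min}}$, truncating the ansatz at sufficiently large $K$ absorbs all such indices into terms of total weight strictly larger than the minimal resonant weight, which yields \eqref{estR}.

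The orthogonality conditions \eqref{R:orth} hold by construction, because the modulation choices at order $\mathbf{z}^{\mathbf{m}}$ were defined precisely so that the symplectic projections of $\mathcal{R}$ onto $N_{g}(\mathcal{H}_{\omega})$ and onto each $\xi_{j}[\omega]$ vanish; translating these projections back into the scalar formulation reproduces exactly the pairings $\<\mathcal{R},\im\varphi\>$, $\<\mathcal{R},\partial_{\omega}\varphi\>$, $\<\mathcal{R},\partial_{x_{l}}\varphi\>$, $\<\mathcal{R},\im x_{l}\varphi\>$ and $\<\mathcal{R},\partial_{z_{jA}}\varphi\>$ of \eqref{R:orth}. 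Smoothness in $(\omega,\mathbf{z})$ propagates through the induction since the Riesz projections onto the discrete spectrum of $\mathcal{H}_{\omega}$ are smooth in $\omega$ (by the finite-multiplicity fact cited from \cite{CPV} together with (H5)), and $\varphi_{\omega}$, $\xi_{j}[\omega]$ are $C^{\infty}$ into $\Sigma$; exponential decay of $\varphi_{\omega}$ and $\xi_{j}[\omega]$ is preserved by the polynomial nonlinearity and by $(\mathcal{H}_{\omega}-\lambda(\omega,\mathbf{m}))^{-1}$ restricted to the discrete-complement subspace, so $G_{\mathbf{m}}\in\Sigma$. The main obstacle is the combinatorial bookkeeping, namely verifying at each step which modulation parameter is available to cancel which projection of $F_{\mathbf{m}}$; one must handle in particular the Jordan-block structure of $\mathcal{H}_{\omega}$ at $\lambda=0$ so that the eight generators of $N_{g}(\mathcal{H}_{\omega})$ are neutralized by exactly the eight real modulations in $(\widetilde{\theta},\widetilde{\omega},\widetilde{\mathbf{y}},\widetilde{\mathbf{v}})$ without overlap, and check that the stopping set $\mathbf{R}_{\mathrm{min}}$ coincides with the set of indices for which $|\lambda(\omega,\mathbf{m})|$ lies in the essential spectrum $(-\infty,-\omega]\cup[\omega,\infty)$, so that no discrete correction is available.
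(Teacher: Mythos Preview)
Your inductive scheme for building the Taylor coefficients $\varphi_{\mathbf{m}}$ matches the paper's Step~1 in spirit, but there is a genuine gap in your treatment of the orthogonality conditions \eqref{R:orth}.  You claim they hold ``by construction'' because at each $\mathbf{m}\in\Lambda_0$ or $\Lambda_j$ the modulation coefficients were chosen to kill the projection of the source onto $N_g(\mathcal{H}_\omega)$ or $\xi_j[\omega]$.  That is not the same thing.  The choices made during the induction guarantee only that the coefficient of $\mathbf{z}^{\mathbf m}$ in $\mathcal R$ vanishes for $\mathbf m\in\mathbf{NR}$; they say nothing about the residual monomials $\mathbf z^{\mathbf m}G_{\mathbf m}$ with $\mathbf m\in\mathbf R_{\mathrm{min}}$ (you explicitly set $\varphi_{\mathbf m}=0$ and record $G_{\mathbf m}=F_{\mathbf m}$ there, adding no modulation), nor about the $\mathbf m\in\mathbf I$ tail.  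Moreover, \eqref{R:orth} requires orthogonality against the $\mathbf z$-\emph{dependent} vectors $\im\varphi[\omega,\mathbf z]$, $\partial_\omega\varphi[\omega,\mathbf z]$, $\partial_{z_{jA}}\varphi[\omega,\mathbf z]$, etc., not against $\im\varphi_\omega$, $\partial_\omega\varphi_\omega$, $\xi_j[\omega]$.  The paper deals with this in a separate second step: after the polynomial construction yields a preliminary residual $\widetilde{\mathcal R}$, one adds further \emph{non-polynomial} corrections $\widetilde\theta_{\mathcal R},\widetilde\omega_{\mathcal R},\widetilde{\mathbf y}_{\mathcal R},\widetilde{\mathbf v}_{\mathcal R},\widetilde{\mathbf z}_{\mathcal R}$ to the modulation scalars, of size $O(\sum_{\mathbf m\in\mathbf R_{\mathrm{min}}}|\mathbf z^{\mathbf m}|)$, determined by solving a $(8+2N)\times(8+2N)$ linear system whose invertibility near $(\omega_*,0)$ uses (H2).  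This is where $\widetilde\omega$ and $\widetilde{\mathbf y}$ actually enter; your scheme never produces them.

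There is a secondary confusion at $\mathbf m\in\Lambda_0$.  You propose eight real modulations $(\widetilde\theta_{\mathbf m},\widetilde\omega_{\mathbf m},\widetilde{\mathbf y}_{\mathbf m},\widetilde{\mathbf v}_{\mathbf m})$ to neutralize the eight generators of $N_g(\mathcal H_\omega)$.  But solvability of $(\mathcal H_\omega-\lambda(\omega,\mathbf m))\Phi_{\mathbf m}=\cdots$ requires the right-hand side to lie in $\mathrm{Ran}(\mathcal H_\omega-\lambda(\omega,\mathbf m))=\ker(\mathcal H_\omega^*-\lambda(\omega,\mathbf m))^\perp$, and that cokernel is four-dimensional, not eight.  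The paper accordingly uses only $\widetilde\theta_{\mathbf m}$ and $\widetilde{\mathbf v}_{\mathbf m}$ in the polynomial step (with $\widetilde\omega$ and $\widetilde{\mathbf y}$ carrying only the Step~2 remainder parts), splitting the system into $L_{\omega,+}$ and $L_{\omega,-}$ pieces and matching the four Fredholm obstructions exactly.  Invoking (H2) at this stage is also misplaced: it is not needed for the Step~1 solvability, only for the Step~2 matrix.
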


We set
\begin{align}\label{FermiG}
\mathfrak{G}_{\mathbf{m}}=\begin{pmatrix}
G_{\mathbf{m}}\\
G_{\overline{\mathbf{m}}}
\end{pmatrix},
\end{align}
and the wave operator $W$ by
\begin{align}\label{def:waveop}
W=\lim_{t\to \infty} e^{\im t \mathcal{H}_{\omega_*}}e^{-\im t \sigma_3(-\Delta+\omega_*)}.
\end{align}
For the existence and boundedness, as well as its adjoint $W^*$ and inverse see \cite{Cu1,CPV}.

We state now our final assumption, the  Fermi Golden Rule (FGR).

\begin{itemize}
	\item[(H7)] For each $k=1,\cdots,M$, $\mathcal{F}{\(W^*\mathfrak{G}_{\mathbf{m}(k,1)}\)_+},\cdots,\mathcal{F}{\(W^*\mathfrak{G}_{\mathbf{m}(k,M_k)}\)_+}$ are linearly independent as a function on the sphere $|\xi|^2=r_k-\omega_*$.
Here, $\mathcal{F}{f}$ is the Fourier transform of $f$ and $(F)_+$ is the upper component of the $\C^2$-valued function $F$.
\end{itemize}

\begin{remark}
If $M_k=1$, (H7) states that there exists some $\xi$ with $|\xi|^2=r_k$ such that  $\widehat{\mathfrak{G}}_{\mathbf{m}}(\xi)\neq 0$.
In generic situations, when is the case $\lambda_j(\omega_*)$ are $\Z $--linearly independent and each eigenspace of $\mathcal{H}_{\omega}$ is spanned by a finite subgroup of rotations of one element, (H7) is generic.
The $ G_{\mathbf{m}}$,
   are  obtained by an elementary recursive linear procedure, much simpler than the analogous one in \cite{Cuccagna11CMP,Cuccagnatrans,Bambusi13CMPas},
 which  involves various nonlinear  normal forms transformations.   The theory in this paper should make much more feasible
the task of checking numerically the FGR hypothesis for specific examples.
\end{remark}

\section{Proof of Proposition \ref{prop:rp_pre_galilei}}\label{sec:rf}

In this section, we provide the proof of Proposition \ref{prop:rp_pre_galilei}.
\begin{proof}[Proof of Proposition \ref{prop:rp_pre_galilei}]
We seek for $\varphi$, $\widetilde{\theta}$, $\widetilde{\omega}$, $\widetilde{\mathbf{y}}$,
$\widetilde{\mathbf{v}}$ and $\widetilde{\mathbf{z}}$ having the following expansions:
\begin{align}\label{eq:anzphi}
\varphi[\omega,\mathbf{z}]&=\sum_{\mathbf{z}\in \mathbf{NR}}\mathbf{z}^{\mathbf{m}}\varphi_{\mathbf{m}}[\omega],\text{where}\
 \varphi_0[\omega]=\varphi_{ \omega},\ \text{and}\  \varphi_{\mathbf{e}_{j\pm}}[\omega]=\xi_{j\pm}[\omega]\ \text{for}\ j=1,\cdots,N,
\end{align}
and
\begin{align}
\widetilde{\theta}(\omega,\mathbf{z})&=\omega+ \sum_{\mathbf{m}\in \Lambda_0}\mathbf{z}^{\mathbf{m}}\widetilde{\theta}_{\mathbf{m}}(\omega)+\widetilde{\theta}_{\mathcal{R}}(\omega,\mathbf{z}),\label{eq:thetaanz},\
\widetilde{\omega}(\omega,\mathbf{z})=\widetilde{\omega}_{\mathcal{R}}(\omega,\mathbf{z}),\\
\widetilde{y}_l(\omega,\mathbf{z})&=\widetilde{y}_{l\mathcal{R}}(\omega,\mathbf{z}),\ \widetilde{v_l}(\omega,\mathbf{z})=\sum_{\mathbf{m}\in \Lambda_0}\mathbf{z}^{\mathbf{m}}\widetilde{v}_{l\mathbf{m}}(\omega)+\widetilde{v}_{l\mathcal{R}}(\omega,\mathbf{z}),\ l=1,2,3,\label{eq:vanz}\\
\widetilde{z_j}(\omega,\mathbf{z})&=-\im \lambda_j z_j-\im \sum_{\mathbf{m}\in \Lambda_j,\ \|\mathbf{m}\|\geq 2}\mathbf{z}^{\mathbf{m}}\widetilde{\lambda}_{j\mathbf{m}}(\omega)+\widetilde{z}_{j\mathcal{R}}(\omega,\mathbf{z}),\ j=1,\cdots,N,\label{eq:zanz}
\end{align}
with $\widetilde{\lambda}_{j \mathbf{e}_{j+}}(\omega)= \lambda_j(\omega)$ and
\begin{align}
|\widetilde{\theta}_{\mathcal{R}}|+|\widetilde{\omega}_{\mathcal{R}}|+\|\widetilde{\mathbf{y}}_{\mathcal{R}}\|+\|\widetilde{\mathbf{v}}_{\mathcal{R}}\| + \|\widetilde{\mathbf{z}}_{\mathcal{R}}\| \lesssim \sum_{\mathbf{m} \in \mathbf{R}_{\mathrm{min}}} |\mathbf{z}^{\mathbf{m}}|.\label{eq:Ranz}
\end{align}
Our task is to determine $\varphi_{\mathbf{m}}$, $\widetilde{\theta}_{\mathbf{m}}$, $\widetilde{\theta}_{\mathcal{R}}$, $\widetilde{\omega}_{\mathcal{R}}$, $\widetilde{\mathbf{y}}_{\mathcal{R}}$, $\widetilde{\mathbf{v}}_{\mathbf{m}}$, $\widetilde{\mathbf{v}}_{\mathcal{R}}$, $\widetilde{\mathbf{z}}_\mathbf{m}$ and $\widetilde{\mathbf{z}}_{\mathcal{R}}$ so that $\mathcal{R}$ given by \eqref{eq:phi_pre_gali} satisfies \eqref{expestR}--\eqref{R:orth}.

The proof consists of two  steps.
In the 1st, we substitute \eqref{eq:anzphi}--\eqref{eq:zanz} into the r.h.s.\ of \eqref{eq:phi_pre_gali} and solve the equation for each coefficients of $\mathbf{z}^{\mathbf{m}}$ for $\mathbf{m}\in \mathbf{NR}$.
This determines $\varphi_{\mathbf{m}}$, $\widetilde{\theta}_{\mathbf{m}}$,  $\widetilde{\mathbf{v}}_{\mathbf{m}}$, and $\widetilde{\mathbf{z}}_\mathbf{m}$.
Furthermore, since we have erased all coefficients of $\mathbf{z}^{\mathbf{m}}$ with $\mathbf{m}\in \mathbf{NR}$, the r.h.s.\ of \eqref{eq:phi_pre_gali}, which we will denote $\widetilde{\mathcal{R}}$ (see \eqref{def:tildeR} below), will satisfy the error estimate \eqref{estR} after subtracting the $\mathbf{z}^{\mathbf{m}}$ terms with $\mathbf{m}\in \mathbf{R}_{\mathrm{min}}$.
Next, in the 2nd step, we choose $\widetilde{\theta}_{\mathcal{R}}$, $\widetilde{\omega}_{\mathcal{R}}$, $\widetilde{\mathbf{y}}_{\mathcal{R}}$, $\widetilde{\mathbf{v}}_{\mathcal{R}}$  and $\widetilde{\mathbf{z}}_{\mathcal{R}}$ so that \eqref{R:orth} is satisfied.
In the 2nd step we are basically taking a projection of $\widetilde{\mathcal{R}}$ to satisfy the orthogonality conditions \eqref{R:orth}.

\subsubsection*{1st step}
We substitute \eqref{eq:anzphi}, \eqref{eq:thetaanz}, \eqref{eq:vanz} and \eqref{eq:zanz} into the r.h.s.\ of \eqref{eq:phi_pre_gali}.

Expanding $-\Delta \varphi + g(|\varphi|^2)\varphi$ and omitting the dependence on $\omega$ in the r.h.s.'s, except for the ground state $\varphi _{\omega}$, we have,
\begin{align}
-\Delta \varphi[\omega,\mathbf{z}] &= \sum_{\mathbf{m}\in \mathbf{NR}}\mathbf{z}^{\mathbf{m}}(-\Delta \varphi_{\mathbf{m}}),\label{exp:delta}\\
g(|\varphi[\omega,\mathbf{z}]|^2)\varphi[\omega,\mathbf{z}]&=\sum_{\mathbf{m} \in \mathbf{NR}} \mathbf{z}^{\mathbf{m}}g(\varphi_{ \omega}^2)\varphi_{\mathbf{m}}+\sum_{\mathbf{m} \in \mathbf{NR}\setminus\{0\}} \mathbf{z}^{\mathbf{m}}\(g'(\varphi_{ \omega}^2)\varphi_{ \omega}^2(\varphi_{\mathbf{m}}+\varphi_{\overline{\mathbf{m}}}) + g_{\mathbf{m}}\)+I,\label{exp:g}
\end{align}
where
\begin{align}
&g_{\mathbf{m}}=g'(\varphi_{ \omega}^2)
\sum_{\substack{\mathbf{m}^1+\mathbf{m}^2=\mathbf{m}\\ \mathbf{m}^1,\mathbf{m}^2\neq 0}}
\(\varphi_{\mathbf{m}^1}\varphi_{\overline{\mathbf{m}^2}}\varphi_{ \omega}
+\varphi_{\mathbf{m}^1}\varphi_{\mathbf{m}^2}\varphi_{ \omega}
+\varphi_{\overline{\mathbf{m}^1}}\varphi_{\mathbf{m}^2}\varphi_{ \omega}+\sum_{\substack{\mathbf{m}^{11}+\mathbf{m}^{12}=\mathbf{m}^1\\ \mathbf{m}^{11},\mathbf{m}^{12}\neq 0}}\varphi_{\mathbf{m}^{11}}\varphi_{\overline{\mathbf{m}^{12}}}\varphi_{\mathbf{m}^2}\)\nonumber\\
&+\sum_{n=2}^{\infty}\frac{1}{n!}g^{(n)}(\varphi_{ \omega}) \sum_{\substack{\mathbf{m}^1+\cdots+\mathbf{m}^{n+1}=\mathbf{m}\\ \mathbf{m}^1,\cdots,\mathbf{m}^n\neq 0 }} \prod_{j=1}^n\(\varphi_{\omega}\varphi_{\mathbf{m}^j}+\varphi_{\omega}\varphi_{\overline{\mathbf{m}^j}}+\sum_{\substack{\mathbf{m}^{j1}+\mathbf{m}^{j2}=\mathbf{m}^j\\ \mathbf{m}^{j1},\mathbf{m}^{j2}\neq 0}}\varphi_{\mathbf{m}^{j1}}\varphi_{\mathbf{m}^{j2}}\)\varphi_{\mathbf{m}^{n+1}}.\label{expand:g}
\end{align}
and $I=I[\omega,\mathbf{m}]$ is a collection of remainders.
\begin{remark}
In \eqref{expand_theta_phi}, \eqref{expand_v_phi} and \eqref{exapnd_z_phi} below, we use $I$ with the same meaning.
\end{remark}
Notice that in the 1st sum in the second line of \eqref{expand:g}, for $n>\|\mathbf{m}\|$, the set of $\mathbf{m}^1,\cdots,\mathbf{m}^{n+1}$ satisfying the condition of the 2nd sum is empty.
Thus, the 1st sum is just a finite sum and not an infinite series.
Furthermore, notice that all terms appearing in \eqref{expand:g} consist  of $\varphi_{\mathbf{n}}$ with $\|\mathbf{n}\|< \|\mathbf{m}\|$.
In this sense, $g_{\mathbf{m}}$ is a "known" term.

We next expand the terms $\widetilde{\theta}\varphi$ and $\frac{1}{2}\widetilde{v_l}x_l\varphi$ for $l=1,2,3$.
\begin{align}
\widetilde{\theta}(\omega,\mathbf{z})\varphi[\omega,\mathbf{z}]&=
\sum_{\mathbf{m}\in \Lambda_0}\mathbf{z}^{\mathbf{m}}\widetilde{\theta}_{\mathbf{m}}\varphi_{ \omega}
+\sum_{\mathbf{m} \in \mathbf{NR}}\mathbf{z}^{\mathbf{m}}\omega \varphi_{\mathbf{m}}
+\sum_{\mathbf{m}\in \mathbf{NR}}\sum_{\substack{\mathbf{m}^1+\mathbf{m}^2=\mathbf{m}\label{expand_theta_phi}\\ \mathbf{m}^1,\mathbf{m}^2\neq 0}}
\mathbf{z}^{\mathbf{m}}\widetilde{\theta}_{\mathbf{m}^1}\varphi_{\mathbf{m}^2}+I,\\
\frac{1}{2}\widetilde{v_l}(\omega,\mathbf{z})x_l\varphi[\omega,\mathbf{z}]&=\sum_{\mathbf{m}\in \Lambda_0}\mathbf{z}^{\mathbf{m}}\frac{1}{2}\widetilde{v}_{l\mathbf{m}}x_l\varphi_\omega
+\sum_{\mathbf{m}\in \mathbf{NR}}\sum_{\substack{\mathbf{m}^1+\mathbf{m}^2=\mathbf{m}\\ \mathbf{m}^1,\mathbf{m}^2\neq 0}}
\mathbf{z}^{\mathbf{m}}\frac{1}{2}\widetilde{v}_{l,\mathbf{m}^1}x_l\varphi_{\mathbf{m}^2}+I.\label{expand_v_phi}
\end{align}
The 3rd term in r.h.s.\ of  \eqref{expand_theta_phi} and the 2nd term in r.h.s.\ of \eqref{expand_v_phi} are known terms.

Expanding $-\im  D_{\mathbf{z}}\varphi\widetilde{\mathbf{z}}$, we have
\begin{align}
\nonumber &-\im  D_{\mathbf{z}}\varphi\widetilde{\mathbf{z}}= -\sum_{\mathbf{m}\in \mathbf{NR}}\mathbf{z}^{\mathbf{m}}\sum_{k=1}^{N}\(\sum_{\mathbf{m}^1+\mathbf{m}^2=\mathbf{m}+\mathbf{e}^{k+}}m^1_{k+}\lambda_{k,\mathbf{m}^2}\varphi_{\mathbf{m}^1}-\sum_{\mathbf{m}^1+\overline{\mathbf{m}^2}=\mathbf{m}+\mathbf{e}^{k-}}m^1_{k-}\lambda_{k,\mathbf{m}^2}\varphi_{\mathbf{m}^1}\)+I\\&
=-\sum_{\mathbf{m}\in \mathbf{NR}}\mathbf{z}^{\mathbf{m}}\lambda(\omega,\mathbf{m})\varphi_{\mathbf{m}}-\sum_{k=1}^N\(\sum_{\mathbf{m}\in \mathbf{NR}\setminus\{\mathbf{e}^{k+}\}} \mathbf{z}^{\mathbf{m}}\lambda_{k,\mathbf{m}}\xi_{j+}-\sum_{\mathbf{m}\in \mathbf{NR}\setminus\{\mathbf{e}^{k-}\}}\mathbf{z}^{\mathbf{m}}\lambda_{k,\overline{\mathbf{m}}}\xi_{j-}\label{exapnd_z_phi}\)\\&\quad
 -\sum_{\mathbf{m}\in \mathbf{NR}}\mathbf{z}^{\mathbf{m}}\sum_{k=1}^{N}\(\sum_{\substack{\mathbf{m}^1+\mathbf{m}^2=\mathbf{m}+\mathbf{e}^{k+}\\ 2\leq \|\mathbf{m}^1\|<\|\mathbf{m}\|}}m^1_{k+}\lambda_{k,\mathbf{m}^2}\varphi_{\mathbf{m}^1}-\sum_{\substack{\mathbf{m}^1+\overline{\mathbf{m}^2}=\mathbf{m}+\mathbf{e}^{k-}\\ 2\leq \|\mathbf{m}^1\|<\|\mathbf{m}\|}}m^1_{k-}\lambda_{k,\mathbf{m}^2}\varphi_{\mathbf{m}^1}\)+I.\nonumber
\end{align}
The last line (except $I$) are known terms.

We collect all known terms in what we denote $K_{\mathbf{m}}$.
That is,
\begin{align}
K_{\mathbf{m}}[\omega]=&g_{\mathbf{m}}
+\sum_{\substack{\mathbf{m}^1+\mathbf{m}^2=\mathbf{m}\\ \mathbf{m}^1,\mathbf{m}^2\neq 0}}
\(\widetilde{\theta}_{\mathbf{m}^1}\varphi_{\mathbf{m}^2}+\frac{1}{2}\widetilde{v}_{l,\mathbf{m}^1}x_l\varphi_{\mathbf{m}^2}\)\label{eq:Km}\\
&-\sum_{k=1}^{N}\(\sum_{\substack{\mathbf{m}^1+\mathbf{m}^2=\mathbf{m}+\mathbf{e}^{k+}\\ 2\leq \|\mathbf{m}^1\|<\|\mathbf{m}\|}}m^1_{k+}\lambda_{k,\mathbf{m}^2}\varphi_{\mathbf{m}^1}-\sum_{\substack{\mathbf{m}^1+\overline{\mathbf{m}^2}=\mathbf{m}+\mathbf{e}^{k-}\\ 2\leq \|\mathbf{m}^1\|<\|\mathbf{m}\|}}m^1_{k-}\lambda_{k,\mathbf{m}^2}\varphi_{\mathbf{m}^1}\).\nonumber
\end{align}
Collecting the coefficients of $\mathbf{z}^{\mathbf{m}}$, for all $\mathbf{m}\in \mathbf{NR}$ we impose
\begin{align}
0=&\(-\Delta +\omega + g(\varphi_{ \omega}^2)+g'(\varphi_{ \omega}^2)\varphi_{ \omega}^2\) \varphi_{\mathbf{m}} + g'(\varphi_{ \omega}^2)\varphi_{ \omega}^2 \varphi_{\overline{\mathbf{m}}}-\lambda(\omega,\mathbf{m})\varphi_{\mathbf{m}}  \nonumber\\&+\widetilde{\theta}_{\mathbf{m}}\varphi_{ \omega} +\frac{1}{2}\widetilde{\mathbf{v}}_{\mathbf{m}}\cdot x \varphi_{ \omega}-\sum_{k=1}^N \(\lambda_{k,\mathbf{m}}\xi_{k+}-\lambda_{k,\overline{\mathbf{m}}}\xi_{k-}\)+K_{\mathbf{m}},\label{eq:funrp}
\end{align}
where $\widetilde{\theta}_{\mathbf{m}}=0$ and $\widetilde{\mathbf{v}}_{\mathbf{m}}=0$ for $\mathbf{m}\not\in \Lambda_0$, $\lambda_{j,\mathbf{m}}=0$ for $\mathbf{m}\not\in \Lambda_j\cap\{\|\mathbf{m}\|\geq 2\}$, the terms with $g'$ are absent when $\mathbf{m}=0$ and  the 1st (resp.\ 2nd) term in $\sum_{k=1}^{N}$ is absent when $\mathbf{m}=\mathbf{e}^{j+}$ ($\mathbf{e}^{j-}$).

We first check that the root cases $\mathbf{m}=0$ and $\mathbf{e}^{j\pm}$ are satisfied for our initial choice given in \eqref{eq:anzphi}.

\begin{claim}
$\varphi_\mathbf{m}=\varphi_{ \omega}\in C^\infty (\mathcal{O}, \Sigma ) $ solves \eqref{eq:funrp} for the case $\mathbf{m}=0$.
\end{claim}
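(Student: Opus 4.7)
The plan is a direct verification by substitution: I would just plug $\mathbf{m}=0$ into \eqref{eq:funrp} and show that every term besides the ground state operator acting on $\varphi_\omega$ vanishes, at which point the identity reduces to \eqref{eq:sp}.

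First I would invoke the conventions spelled out immediately after \eqref{eq:funrp}. The clause ``the terms with $g'$ are absent when $\mathbf{m}=0$'' eliminates both $g'(\varphi_\omega^2)\varphi_\omega^2\varphi_{\mathbf{m}}$ and $g'(\varphi_\omega^2)\varphi_\omega^2\varphi_{\overline{\mathbf{m}}}$. Next, $\lambda(\omega,0)=0$ by definition, so the $-\lambda(\omega,\mathbf{m})\varphi_{\mathbf{m}}$ term drops. Because $\Lambda_0$ is defined as a subset of $\mathbf{NR}\setminus\{0\}$ in the ansatz \eqref{eq:thetaanz}--\eqref{eq:vanz}, the coefficients $\widetilde{\theta}_0$ and $\widetilde{\mathbf{v}}_0$ are zero; and since $\lambda_{k,\mathbf{m}}$ is declared to vanish outside $\Lambda_j\cap\{\|\mathbf{m}\|\geq 2\}$, both $\lambda_{k,0}$ and $\lambda_{k,\overline{0}}$ vanish.

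The only slightly nontrivial piece is showing $K_0=0$. Inspecting \eqref{eq:Km} and \eqref{expand:g} with $\mathbf{m}=0$, every inner sum ranges over decompositions $\mathbf{m}^1+\mathbf{m}^2=0$ (respectively $\mathbf{m}^1+\overline{\mathbf{m}^2}=0$) with $\mathbf{m}^1,\mathbf{m}^2\neq 0$, which is impossible in $\N_0^{2N}$; the two sums over $\mathbf{m}^1+\mathbf{m}^2=\mathbf{e}^{k\pm}$ are likewise empty because the constraint $\|\mathbf{m}^1\|\geq 2$ conflicts with $\|\mathbf{m}^1+\mathbf{m}^2\|=1$. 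Thus $g_0=0$ and $K_0=0$. After these simplifications, equation \eqref{eq:funrp} collapses to
\[
\bigl(-\Delta + \omega + g(\varphi_\omega^2)\bigr)\varphi_\omega = 0,
\]
which is exactly the ground state equation \eqref{eq:sp}.

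For the regularity statement $\varphi_\omega\in C^\infty(\mathcal{O},\Sigma)$, I would combine the assumed smoothness of $\omega\mapsto \varphi_\omega$ in $H^1\cap L^\infty$ from \eqref{eq:mapgs} with the uniform exponential decay recalled after \eqref{eq:defsig} (citing \cite{HL07BullLMS}) and standard elliptic regularity applied to \eqref{eq:sp}, which upgrades the $H^1$-smoothness to the weighted $H^2$-smoothness defining $\Sigma$. There is no real obstacle in this claim; it is a bookkeeping check, and the only place where one must be careful is keeping track of the conventions that make the various index sets defining $K_0$ empty.
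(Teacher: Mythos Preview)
Your proposal is correct and follows exactly the paper's approach: substitute $\mathbf{m}=0$, observe that every auxiliary term in \eqref{eq:funrp} vanishes by the stated conventions and the emptiness of the index sets defining $K_0$, and recognize the remaining identity as the ground state equation \eqref{eq:sp}. The paper compresses this into a single sentence, while you spell out each vanishing term; the regularity assertion $\varphi_\omega\in C^\infty(\mathcal{O},\Sigma)$ is in fact already recorded just after \eqref{eq:defsig}, so no additional argument is needed there.
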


\begin{proof}
In this case \eqref{eq:funrp} is reduced to \eqref{eq:sp}.
\end{proof}

\begin{claim}
Setting $\lambda_{j\mathbf{e}^j}(\omega)=\lambda_j(\omega)$, $\varphi_{\mathbf{e}_{j\pm}}=\xi_{j\pm}[\omega]\in C^\infty (\mathcal{O}, \Sigma ) $ solves \eqref{eq:funrp} for $\mathbf{m}=\mathbf{e}_{j\pm}$.
\end{claim}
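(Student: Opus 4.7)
The plan is to specialize equation \eqref{eq:funrp} to $\mathbf{m}=\mathbf{e}^{j\pm}$, show that most of the terms vanish for elementary combinatorial reasons, and recognize the surviving equation as one row of the eigenvalue identity $\mathcal{H}_\omega \xi_j[\omega]=\lambda_j(\omega)\xi_j[\omega]$.

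First I would check that $K_{\mathbf{e}^{j\pm}}=0$. Every sum in \eqref{eq:Km}, including those implicit in the definition \eqref{expand:g} of $g_{\mathbf{m}}$, is indexed by decompositions $\mathbf{m}^1+\mathbf{m}^2=\mathbf{m}$ (possibly with an $\mathbf{e}^{k\pm}$ shift) with both summands nonzero, or by $\|\mathbf{m}^1\|\geq 2$ and $\|\mathbf{m}^1\|<\|\mathbf{m}\|$. Since $\|\mathbf{e}^{j\pm}\|=1$, each of these index sets is empty. Next, since $\lambda(\omega_*,\mathbf{e}^{j\pm})=\pm\lambda_j(\omega_*)\neq 0$ by (H5), we have $\mathbf{e}^{j\pm}\notin\Lambda_0$, so $\widetilde{\theta}_{\mathbf{e}^{j\pm}}=0$ and $\widetilde{\mathbf{v}}_{\mathbf{e}^{j\pm}}=0$ by the conventions below \eqref{eq:funrp}. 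Finally, by the rule stated in the same paragraph, the term $\lambda_{k,\mathbf{m}}\xi_{k+}$ drops when $\mathbf{m}=\mathbf{e}^{j+}$ and $\lambda_{k,\overline{\mathbf{m}}}\xi_{k-}$ drops when $\mathbf{m}=\mathbf{e}^{j-}$.

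Second, I plug in the ansatz. For $\mathbf{m}=\mathbf{e}^{j+}$ we have $\overline{\mathbf{m}}=\mathbf{e}^{j-}$, $\varphi_{\mathbf{m}}=\xi_{j+}$, $\varphi_{\overline{\mathbf{m}}}=\xi_{j-}$, and $\lambda(\omega,\mathbf{m})=\lambda_j(\omega)$. After the reductions above, equation \eqref{eq:funrp} collapses to
\begin{align*}
\bigl(-\Delta+\omega+g(\varphi_\omega^2)+g'(\varphi_\omega^2)\varphi_\omega^2-\lambda_j(\omega)\bigr)\xi_{j+}+g'(\varphi_\omega^2)\varphi_\omega^2\,\xi_{j-}=0,
\end{align*}
which is exactly the top row of $\mathcal{H}_\omega\xi_j[\omega]=\lambda_j(\omega)\xi_j[\omega]$ as read off from \eqref{eq:lineariz_gen}, and hence holds by (H5). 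The case $\mathbf{m}=\mathbf{e}^{j-}$ is symmetric: one has $\lambda(\omega,\mathbf{m})=-\lambda_j(\omega)$ and $\varphi_{\overline{\mathbf{m}}}=\xi_{j+}$, and \eqref{eq:funrp} reduces to
\begin{align*}
\bigl(-\Delta+\omega+g(\varphi_\omega^2)+g'(\varphi_\omega^2)\varphi_\omega^2+\lambda_j(\omega)\bigr)\xi_{j-}+g'(\varphi_\omega^2)\varphi_\omega^2\,\xi_{j+}=0,
\end{align*}
which is the bottom row of the same eigenvalue identity multiplied by $-1$, again holding by (H5).

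The $C^\infty(\mathcal O,\Sigma)$ regularity is then not a separate claim but a direct quotation: it was observed in the paragraph following \eqref{eq:krein} that $\omega\mapsto\xi_j[\omega]$ is smooth into $\Sigma$. The main obstacle is essentially bookkeeping: one must carefully apply the "absent term" conventions introduced just below \eqref{eq:funrp} so as not to double-count the leading $-\im\lambda_j z_j$ of \eqref{eq:zanz} against the $\lambda_{k,\mathbf{m}}\xi_{k\pm}$ terms. Everything else is a direct matching of the matrix entries of $\mathcal{H}_\omega$.
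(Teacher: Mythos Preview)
Your proof is correct and follows essentially the same approach as the paper, only with more detail: the paper simply asserts that the second line of \eqref{eq:funrp} vanishes and then combines the two cases $\mathbf{m}=\mathbf{e}^{j+}$ and $\mathbf{m}=\mathbf{e}^{j-}$ into the single matrix equation $(\mathcal{H}_\omega-\lambda_j(\omega))\xi_j[\omega]=0$, whereas you spell out the vanishing of $K_{\mathbf{m}}$, $\widetilde{\theta}_{\mathbf{m}}$, $\widetilde{\mathbf{v}}_{\mathbf{m}}$ and the $\lambda_{k,\mathbf{m}}$-terms explicitly and then match each row of the eigenvalue identity separately.
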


\begin{proof}
In this case, the 2nd line of \eqref{eq:funrp} vanishes and combining \eqref{eq:funrp} for $\mathbf{m}=\mathbf{e}^{j+}$ and $\mathbf{m}=\mathbf{e}^{j-}$, we obtain
$
\(\mathcal{H}_{\omega}-\lambda_j(\omega)\)
\xi_j[\omega]
=0,
$
which is the definition of $\xi_j[\omega]$.
\end{proof}

In the following, we assume that we have determined $K_{\mathbf{m}}$, that is,  we have determined $\varphi_{\mathbf{n}}$, $\widetilde{\theta}_{\mathbf{n}}$, $\widetilde{\mathbf{v}}_{\mathbf{n}}$ and $\lambda_{k,\mathbf{n}}$ for all $\mathbf{n}$ with $\|\mathbf{n}\|< \|\mathbf{m}\|$.
We start from the case $\mathbf{m}\in \Lambda_{0}$.

\begin{claim}\label{claim:Lambda0}
Let $\mathbf{m}\in \Lambda_0$.
Then, we can choose $\widetilde{\theta}_{\mathbf{m}}, \widetilde{\theta}_{\overline{\mathbf{m}}}$, $\widetilde{\mathbf{v}}_{\mathbf{m}}$ and $\widetilde{\mathbf{v}}_{\overline{\mathbf{m}}}$ so that we can solve \eqref{eq:funrp} with $\mathbf{m}$ and $\overline{\mathbf{m}}$ for $\varphi_{\mathbf{m}}[\omega]$ and $\varphi_{\overline{\mathbf{m}}}[\omega]$.
Furthermore, $\varphi_{\mathbf{m}}[\omega]$ and $\varphi_{\overline{\mathbf{m}}}[\omega]$ are   in $C ^\infty (\mathcal{O},\Sigma )$, restricting $\mathcal{O}$ if necessary.
\end{claim}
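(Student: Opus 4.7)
Stack the two scalar equations in \eqref{eq:funrp} indexed by $\mathbf{m}$ and $\overline{\mathbf{m}}$ (both in $\Lambda_0$ since $\lambda(\omega_*,\overline{\mathbf{m}})=-\lambda(\omega_*,\mathbf{m})=0$) into one vector equation for $\Phi_{\mathbf{m}}:={}^t(\varphi_{\mathbf{m}},\varphi_{\overline{\mathbf{m}}})$. Using $\lambda(\omega,\overline{\mathbf{m}})=-\lambda(\omega,\mathbf{m})$ and negating the second row, it takes the form
\[
\bigl(\mathcal{H}_\omega-\lambda(\omega,\mathbf{m})\bigr)\Phi_{\mathbf{m}}=F_{\mathbf{m}}\bigl[\omega;\widetilde{\theta}_{\mathbf{m}},\widetilde{\mathbf{v}}_{\mathbf{m}}\bigr],
\]
where $F_{\mathbf{m}}$ depends affinely on the parameters $\widetilde{\theta}_{\mathbf{m}},\widetilde{\mathbf{v}}_{\mathbf{m}}$ (their $\overline{\mathbf{m}}$-counterparts being determined by the reality of $\widetilde{\theta}$ and $\widetilde{\mathbf{v}}$), with linear part produced by the contributions $\widetilde{\theta}_{\mathbf{m}}\varphi_\omega$ and $\widetilde{\mathbf{v}}_{\mathbf{m}}\cdot x\varphi_\omega$, plus the already-determined terms $K_{\mathbf{m}},K_{\overline{\mathbf{m}}}\in\Sigma$.

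\textbf{Solvability at $\omega=\omega_*$.} Because $\lambda(\omega_*,\mathbf{m})=0$, the Fredholm alternative requires $F_{\mathbf{m}}(\omega_*)\perp\ker\mathcal{H}_{\omega_*}^*$. From $\mathcal{H}_\omega^*=\sigma_3\mathcal{H}_\omega\sigma_3$ and \eqref{eq:Ng}, $\ker\mathcal{H}_{\omega_*}^*=\sigma_3\ker\mathcal{H}_{\omega_*}$ is spanned by $\phi_{\omega_*}$ (phase) and $\sigma_3\partial_{x_l}\phi_{\omega_*}$ (translations). Radial parity of $\varphi_{\omega_*}$ forces a clean decoupling: the source $\widetilde{\theta}_{\mathbf{m}}\varphi_\omega$ pairs nontrivially only with the phase element (with factor $\|\varphi_{\omega_*}\|_{L^2}^2$), while $\widetilde{\mathbf{v}}_{\mathbf{m}}\cdot x\varphi_\omega$ pairs only with the momentum elements $\sigma_3\partial_{x_l}\phi_{\omega_*}$ (producing a diagonal block after integration by parts). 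The resulting Gram matrix on $(\widetilde{\theta}_{\mathbf{m}},\widetilde{\mathbf{v}}_{\mathbf{m}})$ is therefore block-diagonal and invertible, so these parameters are uniquely fixed at $\omega_*$ in terms of the known $K_{\mathbf{m}},K_{\overline{\mathbf{m}}}$.

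\textbf{Smoothness in $\omega$.} To promote this to a $C^\infty$ family for $\omega$ near $\omega_*$, I would run a Lyapunov-Schmidt reduction. Let $P_\omega$ be the Riesz projector onto the generalized kernel of $\mathcal{H}_\omega$, smooth in $\omega$ because the $0$-eigenvalue is isolated by (H1)--(H2). On $\mathrm{ran}(1-P_\omega)$ the operator $\mathcal{H}_\omega-\lambda(\omega,\mathbf{m})$ is uniformly invertible for $\omega$ close to $\omega_*$ since its relevant spectrum stays away from the small eigenvalue $-\lambda(\omega,\mathbf{m})$; this yields $(1-P_\omega)\Phi_{\mathbf{m}}$ as a $C^\infty$ function of $(\omega,\widetilde{\theta}_{\mathbf{m}},\widetilde{\mathbf{v}}_{\mathbf{m}})$ with values in $\Sigma$, by standard resolvent estimates on weighted spaces. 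The reduced finite-dimensional system on $\mathrm{ran}\,P_\omega$ is then solved by the implicit function theorem around the invertible Gram matrix of the previous paragraph, producing $\widetilde{\theta}_{\mathbf{m}}(\omega),\widetilde{\mathbf{v}}_{\mathbf{m}}(\omega)\in C^\infty(\mathcal{O})$; the residual kernel ambiguity in $\Phi_{\mathbf{m}}$ is pinned down by requiring $\Phi_{\mathbf{m}}\perp\sigma_3\ker\mathcal{H}_\omega$ componentwise.

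\textbf{Main obstacle.} The crux is verifying both the invertibility and the smooth deformation of the reduced Gram matrix. While the parity decoupling at $\omega_*$ is straightforward, one must also use the Jordan-chain identities $\mathcal{H}_\omega\partial_\omega\phi_\omega\propto\sigma_3\phi_\omega$ (from differentiating \eqref{eq:sp}) and $\mathcal{H}_\omega(\im\sigma_3 x_l\phi_\omega)\propto\partial_{x_l}\phi_\omega$, together with (H1)--(H2), to confirm that the Jordan structure persists on a neighborhood of $\omega_*$ (no new nullvectors appearing, which would otherwise alter the solvability picture) and that the invertible block persists under perturbation.
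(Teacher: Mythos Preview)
Your approach is close in spirit to the paper's---both tune the free parameters $\widetilde{\theta}_{\mathbf m},\widetilde{\mathbf v}_{\mathbf m}$ to clear the cokernel and then invert on the complement---but the execution is genuinely different. The paper does \emph{not} work with $\mathcal H_\omega$ and its Jordan structure. Instead it adds and subtracts the scalar equations for $\mathbf m$ and $\overline{\mathbf m}$, passing to the unknowns $\varphi_{\mathbf m\pm}:=\varphi_{\mathbf m}\pm\varphi_{\overline{\mathbf m}}$. This produces the coupled system
\[
L_{\omega,+}\varphi_{\mathbf m+}-\lambda(\omega,\mathbf m)\varphi_{\mathbf m-}=\ldots,\qquad
L_{\omega,-}\varphi_{\mathbf m-}-\lambda(\omega,\mathbf m)\varphi_{\mathbf m+}=\ldots,
\]
with the \emph{self-adjoint} scalar operators $L_{\omega,\pm}$. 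The kernels $\ker L_{\omega,+}=\mathrm{span}\{\partial_{x_l}\varphi_\omega\}$ and $\ker L_{\omega,-}=\mathrm{span}\{\varphi_\omega\}$ are explicit, so the solvability conditions are four transparent scalar equations; the paper then projects with the orthogonal projection $P_0[\omega]$ and inverts $\mathrm{diag}(L_{\omega,+},L_{\omega,-})-\lambda(\omega,\mathbf m)P_0\sigma_1$ by a Neumann series (since $\lambda(\omega,\mathbf m)$ is small near $\omega_*$). No Lyapunov--Schmidt on the generalized kernel, no Jordan-chain bookkeeping, and smoothness in $\omega$ is immediate from the explicit formula. Your abstract route should also work, but the self-adjoint $L_\pm$ picture buys a much shorter and more explicit argument.

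There is, however, a genuine soft spot in your parameter count. The claim that ``the Gram matrix on $(\widetilde\theta_{\mathbf m},\widetilde{\mathbf v}_{\mathbf m})$ is block-diagonal and invertible'' is not correct as stated. If you pair the source vector with the cokernel elements $\phi_{\omega_*}$ and $\sigma_3\partial_{x_l}\phi_{\omega_*}$, only the combinations $\widetilde\theta_{\mathbf m-}=\widetilde\theta_{\mathbf m}-\widetilde\theta_{\overline{\mathbf m}}$ and $\widetilde v_{l,\mathbf m+}=\widetilde v_{l,\mathbf m}+\widetilde v_{l,\overline{\mathbf m}}$ appear; the ``orthogonal'' combinations $\widetilde\theta_{\mathbf m+}$ and $\widetilde{\mathbf v}_{\mathbf m-}$ are invisible to the solvability conditions. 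Thus the system is underdetermined and the full Gram matrix is singular. The paper resolves this not by a reality constraint but by the gauge choice $\widetilde\theta_{\mathbf m+}=0$, $\widetilde{\mathbf v}_{\mathbf m-}=0$; after that, the remaining $4\times 4$ block \emph{is} invertible (with entries $\|\varphi_{\omega_*}\|_{L^2}^2$ and $\langle x_l\varphi_{\omega_*},\partial_{x_l}\varphi_{\omega_*}\rangle=-\tfrac12\|\varphi_{\omega_*}\|_{L^2}^2$). Your reality ansatz $\widetilde\theta_{\overline{\mathbf m}}=\overline{\widetilde\theta_{\mathbf m}}$ is a different (and more restrictive) gauge than the one used here, and you would need to check separately that it is compatible with the structure of $K_{\mathbf m},K_{\overline{\mathbf m}}$; the paper avoids this entirely.
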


\begin{proof}
In this case, we can rewrite \eqref{eq:funrp} as
\begin{align*}
0=\(-\Delta +\omega + g(\varphi_{ \omega}^2)+g'(\varphi_{ \omega}^2)\varphi_{ \omega}^2\) \varphi_{\mathbf{m}} + g'(\varphi_{ \omega}^2)\varphi_{ \omega}^2 \varphi_{\overline{\mathbf{m}}}+\widetilde{\theta}_{\mathbf{m}}\varphi_{ \omega} +\frac{1}{2}\widetilde{\mathbf{v}}_{\mathbf{m}}\cdot x \varphi_{ \omega}-\lambda(\omega,\mathbf{m})\varphi_{\mathbf{m}} +K_{\mathbf{m}}.
\end{align*}
Since $\overline{\mathbf{m}}\in \Lambda_0$, adding and subtracting the above equations for $\mathbf{m}$ and $\overline{\mathbf{m}}$, we have
\begin{align}
0&=L_{\omega,+}\varphi_{\mathbf{m}+}-\lambda(\omega,\mathbf{m})\varphi_{\mathbf{m}-}+\widetilde{\theta}_{\mathbf{m}+}\varphi_{\omega}+\frac{1}{2}\widetilde{\mathbf{v}}_{\mathbf{m}+}\cdot  x \varphi_{ \omega}+K_{\mathbf{m}+},\label{eq:Lambda01}\\
0&=L_{\omega,-}\varphi_{\mathbf{m}-}-\lambda(\omega,\mathbf{m})\varphi_{\mathbf{m}+}+\widetilde{\theta}_{\mathbf{m}-}\varphi_{ \omega}+\frac{1}{2}\widetilde{\mathbf{v}}_{\mathbf{m}-}\cdot x\varphi_{ \omega} +K_{\mathbf{m}-},\label{eq:Lambda02}
\end{align}
where $L_{\omega,-}=-\Delta + \omega +g(\varphi_\omega^2)$ and $x_{\mathbf{m}\pm}=x_{\mathbf{m}}\pm x_{\overline{\mathbf{m}}}$ for $x=\varphi,\widetilde{\theta}, \widetilde{\mathbf{v}}$ and $K$.
To solve \eqref{eq:Lambda01} and \eqref{eq:Lambda02}, since $\ker L_{\omega,+} = \mathrm{span}\{\partial_{x_l}\varphi_\omega\ |\ l=1,2,3\}$ from (H1) and since, from the fact that $\varphi_\omega$ is positive,  $L_{\omega,-}=\mathrm{span}\{\varphi _{\omega}\}$,  we set $\widetilde{\theta}_{\mathbf{m}+}=0$, $\widetilde{\mathbf{v}}_{\mathbf{m}-}=0$ and choose $\widetilde{\theta}_{\mathbf{m}-}$ and $\widetilde{v}_{l\mathbf{m}+}$ to satisfy
\begin{align}
\frac{1}{2}\widetilde{v}_{l\mathbf{m}+} \<x_l \varphi_{ \omega},\partial_{x_l}\varphi_{ \omega}\>+\<K_{\mathbf{m}+},\partial_{x_l}\varphi_{ \omega}\>-\lambda(\omega,\mathbf{m})\<\varphi_{\mathbf{m}-},\partial_{x_l}\varphi_{ \omega}\>&=0,\label{eq:Lambda03}\\
\widetilde{\theta}_{\mathbf{m}-}\<\varphi_{ \omega},\varphi_\omega\> +\<K_{\mathbf{m}-},\varphi_\omega\>-\lambda(\omega,\mathbf{m})\<\varphi_{\mathbf{m}+},\varphi_\omega\>&=0.\label{eq:Lambda04}
\end{align}
From $\<x_l\varphi_{\omega},\partial_{x_l}\varphi_{ \omega}\>=-\frac{1}{2}\|\varphi_{ \omega}\|_{L^2}^2$, we can always solve \eqref{eq:Lambda03} and \eqref{eq:Lambda04} w.r.t.\ $\widetilde{v}_{l\mathbf{m}+}$ and $\widetilde{\theta}_{\mathbf{m}-}$ for given $\varphi_{\mathbf{m}\pm}$.
\begin{remark}
When $\mathbf{m}=\overline{\mathbf{m}}$, \eqref{eq:Lambda02} is trivial.
Notice that in this case we have $\lambda(\omega,\mathbf{m})=0$.
\end{remark}
Substituting $\widetilde{v}_{l\mathbf{m}+}$ and $\widetilde{\theta}_{\mathbf{m}-}$ given in \eqref{eq:Lambda03} and \eqref{eq:Lambda04},  into \eqref{eq:Lambda01} and \eqref{eq:Lambda02}, we obtain
\begin{align}\label{eq:Lambda05}
\(\begin{pmatrix}
L_{\omega+}& 0 \\ 0 & L_{\omega-}
\end{pmatrix} -\lambda(\omega,\mathbf{m})P_0[\omega]\sigma_1\)
\begin{pmatrix}
\varphi_{\mathbf{m}+}\\\varphi_{\mathbf{m}-}
\end{pmatrix}
=-P_0[\omega]\begin{pmatrix}
K_{\mathbf{m}+}\\ K_{\mathbf{m}-}
\end{pmatrix},
\end{align}
where $P_0[\omega]$ is the orthogonal projection $L^2(\R^3)\to \mathrm{ker}(L_{\omega+})^\perp \oplus \mathrm{ker}(L_{\omega-})^\perp$.
Since $\mathrm{diag}(L_{\omega+}\ L_{\omega-})$ is invertible on $\mathrm{Ran}P_0[\omega]$ and $\lambda(\omega,\mathbf{m})$ is small if $\omega$ is near $\omega_*$, we can take the inverse of the operator in the l.h.s.\ of \eqref{eq:Lambda05}.
Thus, we have solved \eqref{eq:funrp} for   $\mathbf{m}\in \Lambda_0$.
\end{proof}

We next consider the case $\mathbf{m}\in \Lambda_j$.   For $k$ s.t.\ $\mathbf{m}\in \Lambda_k$ set
\begin{align}\label{eq:lambdakm}
\lambda_{k\mathbf{m}}=\<\widetilde{K}_{\mathbf{m}},\xi_k\>, \text{where } \widetilde{K}_{\mathbf{m}}[\omega]  :=
 \begin{pmatrix}
K_{\mathbf{m}} [\omega]\\ K_{\overline{\mathbf{m}}}[\omega]
\end{pmatrix}.
\end{align}
\begin{claim}\label{claim:Lambdaj}
For $\mathbf{m}\in \Lambda_j$.
  we can solve \eqref{eq:funrp}   for $\varphi_{\mathbf{m}}[\omega]$ and $\varphi_{\overline{\mathbf{m}}}[\omega]$.
Furthermore, $\varphi_{\mathbf{m}}[\omega]$ and $\varphi_{\overline{\mathbf{m}}}[\omega]$ are in  $C ^\infty (\mathcal{O},\Sigma )$, restricting $\mathcal{O}$ if necessary.
\end{claim}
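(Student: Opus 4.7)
I would follow the pattern of Claim \ref{claim:Lambda0}, but replace the scalar reduction via $L_{\omega,\pm}$ with a Lyapunov--Schmidt reduction relative to $\mathcal{H}_\omega$ that handles the spectral obstruction coming from the eigenvalue $\lambda_j(\omega)$.

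First I would combine \eqref{eq:funrp} for the indices $\mathbf{m}$ and $\overline{\mathbf{m}}$ into a single vector equation for $\widetilde\varphi_\mathbf{m} := (\varphi_\mathbf{m}, \varphi_{\overline{\mathbf{m}}})^T$. Since $\mathbf{m} \in \Lambda_j$ with $\|\mathbf{m}\| \geq 2$, the indices $\mathbf{m}, \overline{\mathbf{m}}$ do not belong to $\Lambda_0$, so the $\widetilde\theta_\mathbf{m}$ and $\widetilde{\mathbf{v}}_\mathbf{m}$ contributions vanish. Moreover $\lambda(\omega_*, \overline{\mathbf{m}}) = -\lambda_j(\omega_*) < 0$ while every $\lambda_k(\omega_*) > 0$, so $\overline{\mathbf{m}} \notin \Lambda_k$ for every $k$ and hence $\lambda_{k, \overline{\mathbf{m}}} = 0$. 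Negating the $\overline{\mathbf{m}}$-equation to reconstitute the $\mathcal{H}_\omega$-block structure then yields
\begin{equation*}
(\mathcal{H}_\omega - \lambda(\omega, \mathbf{m}))\, \widetilde\varphi_\mathbf{m}[\omega] = \sum_{k:\, \lambda_k(\omega_*) = \lambda_j(\omega_*)} \lambda_{k, \mathbf{m}}(\omega)\, \xi_k[\omega] - \sigma_3 \widetilde K_\mathbf{m}[\omega],
\end{equation*}
where the sum is finite by (H5) and collapses to a single term when the $\lambda_k(\omega_*)$ are distinct.

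Second, I would implement a Lyapunov--Schmidt reduction. Let $P[\omega]$ denote the Riesz spectral projection of $\mathcal{H}_\omega$ onto the direct sum of eigenspaces attached to those $\lambda_k$, smooth in $\omega$ near $\omega_*$ by (H5). Impose the gauge $P[\omega]\widetilde\varphi_\mathbf{m}[\omega] = 0$. On the complement $\mathrm{Ran}(I - P[\omega])$ the operator $\mathcal{H}_\omega - \lambda(\omega, \mathbf{m})$ has smooth bounded inverse for $\omega$ near $\omega_*$, since the remaining spectrum of $\mathcal{H}_\omega$ stays separated from $\lambda_j(\omega_*)$ by (H3)--(H5); this determines $(I - P[\omega])\widetilde\varphi_\mathbf{m}[\omega] \in C^\infty(\mathcal{O}, \Sigma)$ uniquely in terms of the right-hand side.

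Third, applying $P[\omega]$ to the displayed equation produces the solvability conditions. Since $\sigma_3 \mathcal{H}_\omega$ is symmetric for the real pairing $\langle\cdot,\cdot\rangle$, the cokernel of $\mathcal{H}_\omega - \lambda_k(\omega)$ is spanned by $\sigma_3 \xi_k[\omega]$; testing against $\sigma_3 \xi_k[\omega]$ and using $\sigma_3^2 = I$ together with the Krein normalization \eqref{eq:krein} extracts $\lambda_{k, \mathbf{m}}(\omega) = \langle \widetilde K_\mathbf{m}[\omega], \xi_k[\omega] \rangle$, which is exactly \eqref{eq:lambdakm}. Exponential decay of $\xi_k, \varphi_\omega$, the growth bound \eqref{eq:ggrowth}, and the inductive assumption $\varphi_\mathbf{n} \in C^\infty(\mathcal{O}, \Sigma)$ for $\|\mathbf{n}\| < \|\mathbf{m}\|$ keep $\widetilde K_\mathbf{m}$ in $C^\infty(\mathcal{O}, \Sigma)$, a regularity transferred to $\varphi_\mathbf{m}, \varphi_{\overline{\mathbf{m}}}$ through the Lyapunov--Schmidt inverse. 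I expect the main obstacle to be running the reduction uniformly as $\omega \to \omega_*$, where $\mathcal{H}_\omega - \lambda(\omega, \mathbf{m})$ degenerates precisely on the $\lambda_j(\omega_*)$-eigenspace; the simplicity and positive Krein signature of $\lambda_j$ encoded in (H5) and \eqref{eq:krein} are exactly what make the reduced scalar equation for $\lambda_{k,\mathbf{m}}(\omega)$ non-degenerate and smooth.
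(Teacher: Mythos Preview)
Your proposal is correct and follows essentially the same Lyapunov--Schmidt strategy as the paper: combine the $\mathbf{m}$ and $\overline{\mathbf{m}}$ equations into the vector form $(\mathcal{H}_\omega - \lambda(\omega,\mathbf{m}))\phi_\mathbf{m} = \sum_k \lambda_{k,\mathbf{m}}\xi_k - \sigma_3\widetilde{K}_\mathbf{m}$, project away from the $\lambda_j$-eigenspace, and read off the solvability condition $\lambda_{k,\mathbf{m}} = \langle\widetilde{K}_\mathbf{m}, \xi_k\rangle$ as in \eqref{eq:lambdakm}. The only cosmetic difference is that the paper makes the smooth $\omega$-dependence explicit by conjugating the projection $P_{j+}[\omega]$ to $P_{j+}[\omega_*]$ via Kato's similarity $U_j[\omega]$ and writing out a Neumann series for the inverse, whereas you invoke the smoothness of the Riesz projection and of the restricted inverse on its complement abstractly.
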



\begin{proof}
Combining \eqref{eq:funrp} for $\mathbf{m}$ and $\overline{\mathbf{m}}$, we have
\begin{align}\label{eq:case_m_in_Lamdaj}
0=\(\mathcal{H}_\omega -\lambda(\omega,\mathbf{m})\) \phi_{\mathbf{m}} - \sum_{k:\mathbf{m}\in \Lambda_k}\lambda_{k,\mathbf{m}}\xi_{k}+\sigma_3\widetilde{K}_{\mathbf{m}},\ \text{where}\ \phi_{\mathbf{m}}:=
\begin{pmatrix}
\varphi_{\mathbf{m}}\\ \varphi_{\overline{\mathbf{m}}}
\end{pmatrix}.
\end{align}
Substituting \eqref{eq:lambdakm}, we have
\begin{align}\label{eq:case_m_in_Lamdaj_proj}
0=\(\mathcal{H}_\omega -\lambda(\omega,\mathbf{m})\) \phi_{\mathbf{m}}  +P_j[\omega]\sigma_3\widetilde{K}_{\mathbf{m}},\ \text{where}\ P_{j+}[\omega]:=1-\sum_{k:\mathbf{m}\in \Lambda_k}(\cdot,\sigma_3 \xi_k[\omega])\xi_k[\omega].
\end{align}
Notice that by \eqref{eq:krein}, $P_{j+}[\omega]$ is a projection and one can check $[\mathcal{H}_{\omega},P_{j+}[\omega]]=0$ and $\mathrm{Ran}P_{j+}=\{\sigma_3\xi_k[\omega]\ | k\  \mathrm{s.t.}\ \mathbf{m}\in \Lambda_k\}^{\perp}$.
Therefore, we have
\begin{align*}
\mathrm{Ran}(\mathcal{H}_{\omega_*}-\lambda_j(\omega_*))=\mathrm{Ker}\(\mathcal{H}_{\omega_*}^*-\lambda_j(\omega_*)\)^{\perp}=\mathrm{Ran}P_{j+}[\omega_*],
\end{align*}
where we have used the fact that $\mathrm{Ran}\(\mathcal{H}_{\omega_*}-\lambda_j(\omega_*)\)$ is closed, $\mathcal{H}_{\omega}^*=\sigma_3\mathcal{H}_\omega \sigma_3$ and $\mathrm{ker}\mathcal{H}_{\omega_*}=\{\xi_k[\omega]\ | k\  \mathrm{s.t.}\ \mathbf{m}\in \Lambda_k\}$.
Therefore, the inverse of  $\left.\(\mathcal{H}_{\omega_*}-\lambda_j(\omega_*)\)\right|_{P_{j+}[\omega_*]}$ exists. 
Now, set $U_j[\omega]$ by
\begin{align}
U_j[\omega]:=\(P_{j+}[\omega]P_{j+}[\omega_*]+\(1-P_{j+}[\omega]\)\(1-P_{j+}[\omega_*]\)\)\(1-\(P_{j+}[\omega]-P_{j+}[\omega_*]\)^2\)^{-1/2},
\end{align}
then, we have $U_j[\omega_*]=1$ and $P_{j+}[\omega]=U_j[\omega]P_{j+}[\omega_*]U_j[\omega]^{-1}$ (see 1.6.7 of \cite{KatoPertBook}).
Applying $U[\omega]^{-1}$ to \eqref{eq:case_m_in_Lamdaj_proj}, we have
\begin{align}
\(\mathcal{H}_{\omega_*}-\lambda_j(\omega_*)+V[\omega]\)\(U[\omega]^{-1}\phi_{\mathbf{m}}\)+P_{j+}[\omega_*]U[\omega]^{-1}\sigma_3\widetilde{K}_{\mathbf{m}}=0,
\end{align}
where
\begin{align*}
V[\omega]=P_{j+}[\omega_*] \(\(U[\omega]^{-1}-1\)\(\mathcal{H}_{\omega}-\lambda_j(\omega,\mathbf{m})\)+\(\mathcal{H}_\omega-\mathcal{H}_{\omega_*}-\lambda_j(\omega,\mathbf{m})+\lambda_j(\omega_+)\)\)P_{j+}[\omega_*].
\end{align*}
Thus, we have
\begin{align}
\phi_{\mathbf{m}}=-U[\omega]\sum_{n=0}^{\infty}\(-(\mathcal{H}_{\omega_*}-\lambda_j(\omega_*))^{-1}V[\omega]\)^nP_{j+}[\omega_*]U[\omega]^{-1}\sigma_3\widetilde{K}_{\mathbf{m}}.
\end{align}
Notice that since $V[\omega_*]=0$, the series converges near $\omega=\omega_*$.
The smoothness of $\phi_{\mathbf{m}}$ w.r.t.\ $\omega$ follows from the above expression.
\end{proof}

\begin{claim}\label{claim:other}
Let $\mathbf{m}\in \mathbf{NR}\setminus\{0\}$ with $\mathbf{m}\not\in\Lambda_0$, $\mathbf{m}\not\in \Lambda_j$ and $\overline{\mathbf{m}}\not\in \Lambda_j$.
Then, there exist  $\varphi_{\mathbf{m}}[\omega]$ and $\varphi_{\overline{\mathbf{m}}}[\omega]$ satisfying \eqref{eq:funrp} and $C ^\infty (\mathcal{O},\Sigma )$, restricting $\mathcal{O}$ if necessary.
\end{claim}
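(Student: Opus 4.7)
The key observation is that the hypotheses on $\mathbf{m}$ are tailored precisely so that $\lambda(\omega_*,\mathbf{m})$ lies in the resolvent set of $\mathcal{H}_{\omega_*}$, making the whole step a direct inversion with no modulation parameters to adjust.

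First, I would reduce \eqref{eq:funrp} to the relevant $2\times 2$ system. Since $\mathbf{m}\notin \Lambda_0$, the convention after \eqref{eq:funrp} forces $\widetilde{\theta}_{\mathbf{m}}=\widetilde{\theta}_{\overline{\mathbf{m}}}=0$ and $\widetilde{\mathbf{v}}_{\mathbf{m}}=\widetilde{\mathbf{v}}_{\overline{\mathbf{m}}}=0$, while $\mathbf{m},\overline{\mathbf{m}}\notin \Lambda_j$ for every $j$ forces $\lambda_{k,\mathbf{m}}=\lambda_{k,\overline{\mathbf{m}}}=0$. Combining the equations for $\mathbf{m}$ and $\overline{\mathbf{m}}$ exactly as in \eqref{eq:case_m_in_Lamdaj}, what remains is simply
\begin{align*}
\bigl(\mathcal{H}_\omega-\lambda(\omega,\mathbf{m})\bigr)\phi_{\mathbf{m}} = -\sigma_3 \widetilde{K}_{\mathbf{m}},
\end{align*}
with no unknown multipliers to solve for.

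The main (and only) point of substance is that $\lambda(\omega_*,\mathbf{m})$ belongs to the resolvent set of $\mathcal{H}_{\omega_*}$. Indeed, since $\mathbf{m}\in\mathbf{NR}$ means in particular $\mathbf{m}\notin\mathbf{R}_{\omega_*}$, we have $|\lambda(\omega_*,\mathbf{m})|\le \omega_*$; and $\mathbf{m}\in\mathbf{NR}$ also gives $\mathbf{m}\notin\mathbf{I}$, so (H6) excludes the equality $|\lambda(\omega_*,\mathbf{m})|=\omega_*$. Hence $\lambda(\omega_*,\mathbf{m})$ lies strictly inside the spectral gap $(-\omega_*,\omega_*)$, away from the essential spectrum $\sigma_{\mathrm{ess}}(\mathcal{H}_{\omega_*})=(-\infty,-\omega_*]\cup[\omega_*,\infty)$. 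The three exclusions $\mathbf{m}\notin\Lambda_0$, $\mathbf{m}\notin\Lambda_j$, $\overline{\mathbf{m}}\notin\Lambda_j$ then rule out $\lambda(\omega_*,\mathbf{m})$ coinciding with the discrete eigenvalues $0$ and $\pm\lambda_j(\omega_*)$, respectively. Thus $\mathcal{H}_{\omega_*}-\lambda(\omega_*,\mathbf{m})$ is invertible. By (H5), smoothness of the Riesz projections, and continuity of $\omega\mapsto \lambda(\omega,\mathbf{m})$, this persists on a neighborhood of $\omega_*$; restricting $\mathcal{O}$ if necessary, I can define
\begin{align*}
\phi_{\mathbf{m}}[\omega] := -\bigl(\mathcal{H}_\omega-\lambda(\omega,\mathbf{m})\bigr)^{-1}\sigma_3 \widetilde{K}_{\mathbf{m}}[\omega].
\end{align*}

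Smoothness of $\omega\mapsto \phi_{\mathbf{m}}[\omega]$ follows from the smoothness of $\omega\mapsto\mathcal{H}_\omega$ (via $\omega\mapsto\varphi_\omega$), the smoothness of $\lambda_j(\omega)$ from (H5), holomorphy of the resolvent on the resolvent set, and the inductive smoothness of $\widetilde{K}_{\mathbf{m}}$ from the previous claims. For $\Sigma$-regularity, the inductive hypothesis gives each $\varphi_{\mathbf{n}}$ (for $\|\mathbf{n}\|<\|\mathbf{m}\|$) in $\Sigma$; $\widetilde{K}_{\mathbf{m}}$ is then built from such $\varphi_{\mathbf{n}}$ together with the exponentially decaying $\varphi_\omega$ and the polynomial factors $x_l$ inherited from Claim \ref{claim:Lambda0}. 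These polynomial factors drop the weight $\langle x\rangle^\sigma$ by a fixed finite amount at each step, and are absorbed by the freedom in choosing $\sigma$ large in the definition \eqref{eq:defsig}. Finally, since $\lambda(\omega,\mathbf{m})$ remains inside the spectral gap, the resolvent has an exponentially decaying Green's kernel (Combes--Thomas/Agmon bounds), so it maps $\Sigma$ continuously into $\Sigma$.

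\textbf{Main potential obstacle.} None of the above is hard; the delicate bookkeeping is simply carrying the weight $\sigma$ through the induction so that the polynomial factors $x_l$ produced by Claim \ref{claim:Lambda0} and the products in $g_{\mathbf{m}}$ remain in $\Sigma$. This is handled uniformly by fixing $\sigma$ large enough from the outset, since only finitely many indices $\mathbf{m}$ need to be treated up to any given order and the remainder $\mathcal{R}_1$ is controlled by \eqref{estR}.
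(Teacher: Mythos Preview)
Your proposal is correct and follows exactly the paper's approach: reduce \eqref{eq:funrp} to $(\mathcal{H}_\omega-\lambda(\omega,\mathbf{m}))\phi_{\mathbf{m}}=-\sigma_3\widetilde{K}_{\mathbf{m}}$ and invert. You supply details the paper leaves implicit, namely the verification that $\lambda(\omega_*,\mathbf{m})$ avoids both the essential spectrum (via $\mathbf{m}\notin\mathbf{R}_{\omega_*}$ and (H6)) and the discrete eigenvalues (via the three $\Lambda$-exclusions), and you sketch the $\Sigma$-regularity argument; all of this is sound.
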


\begin{proof}
In this case, \eqref{eq:funrp} can be rewritten as
\begin{align}
\(\mathcal{H}_\omega-\lambda(\omega,\mathbf{m})\)\phi_{\mathbf{m}}+\sigma_3\widetilde{K}_{\mathbf{m}}=0.
\end{align}
Since $\mathcal{H}_\omega-\lambda(\omega,\mathbf{m})$ has a bounded inverse for $\omega$ sufficiently near $\omega_*$, we can solve the above w.r.t.\  $\phi_{\mathbf{m}}={}^t(\varphi_{\mathbf{m}}\ \varphi_{\overline{\mathbf{m}}})$.
\end{proof}

\subsubsection*{2nd step}
For the last step,  for $\varphi = \varphi [\omega ,\mathbf{z}] $  we consider
\begin{align}\label{def:tildeR}
\widetilde{\mathcal{R}}:=&-\Delta \varphi + g(|\varphi|^2)\varphi \nonumber\\&+ \(\omega+\sum_{\mathbf{m}\in \Lambda_0}\mathbf{z}^{\mathbf{m}}\widetilde{\theta}_{\mathbf{m}}\)\varphi +\frac{1}{2}\sum_{\mathbf{m}\in \Lambda_0}\mathbf{z}^{\mathbf{m}}\widetilde{\mathbf{v}}_{\mathbf{m}}\cdot x \varphi -\im \sum_{j=1}^ND_{z_j}\varphi \(-\im \sum_{\mathbf{m}\in \Lambda_j}\mathbf{z}^{\mathbf{m}}\widetilde{\lambda}_{j,\mathbf{m}}\) .
\end{align}
Since all the coefficients of $\mathbf{z}^{\mathbf{m}}$ with $\mathbf{m}\in \mathbf{NR}$ in the r.h.s.\ of \eqref{def:tildeR} are $0$, we have
$\|\widetilde{R}\|_{\Sigma^s}\lesssim_s \sum_{\mathbf{m} \in \mathbf{R}_{\mathrm{min}}}|\mathbf{z}^{\mathbf{m}}|$.
From \eqref{eq:phi_pre_gali} and \eqref{eq:thetaanz}--\eqref{eq:zanz} we have
\begin{align*}
\mathcal{R}=\widetilde{\mathcal{R}}+\widetilde{\theta}_{\mathcal{R}}\varphi + \widetilde{\omega}_{\mathcal{R}}\im \partial_{\omega}\varphi - \im \sum_{l=1}^3 \widetilde{y}_{l\mathcal{R}}\partial_{x_l}\varphi +\frac{1}{2}\widetilde{\mathbf{v}}_{\mathcal{R}}\cdot x\varphi-\im D_{\mathbf{z}}\varphi \widetilde{\mathbf{z}}_{\mathcal{R}}.
\end{align*}
To make $\mathcal{R}$ satisfy \eqref{R:orth}, $\widetilde{\theta}_{\mathcal{R}}$, $\widetilde{\omega}_{\mathcal{R}}$, $\widetilde{\mathbf{y}}_{\mathcal{R}}$, $\widetilde{\mathbf{v}}_{\mathcal{R}}$ and $\widetilde{\mathbf{z}}_{\mathcal{R}}$ need to satisfy the following equation:
\begin{align}
\begin{pmatrix}
\<\widetilde{\mathcal{R}},\im \varphi\>\\
\<\widetilde{\mathcal{R}},\partial_{\omega}\varphi\>\\
\<\widetilde{\mathcal{R}},\partial_{x_1}\varphi\>\\
\vdots\\
\<\widetilde{\mathcal{R}},\im x_l \varphi\>\\
\<\widetilde{\mathcal{R}},\partial_{z_{1R}}\varphi\>\\
\vdots\\
\<\widetilde{\mathcal{R}},\partial_{z_{NI}}\varphi\>
\end{pmatrix}
+
\mathcal{A}[\omega,\mathbf{z}]
\begin{pmatrix}
\widetilde{\theta}_{\mathcal{R}}\\
\widetilde{\omega}_{\mathcal{R}}\\
\widetilde{y}_{1\mathcal{R}}\\
\vdots\\
\widetilde{v}_{3\mathcal{R}}\\
\widetilde{z}_{1R\mathcal{R}}\\
\vdots\\
\widetilde{z}_{NI\mathcal{R}}
\end{pmatrix}
=0,\label{solvetilde}
\end{align}
for an appropriate  matrix $\mathcal{A}[\omega,\mathbf{z}]$   obtained  substituting the orthogonality condition.
From (H2) it is well known and elementary to see  that $\mathcal{A}[\omega ,\mathbf{z}]$ is invertible for $\omega=\omega_*$ and $\mathbf{z}=0$.
Thus, if $|\omega-\omega_*|+\|\mathbf{z}\|$ is sufficiently small, we can solve the above equation and $\mathcal{R}$ will satisfy \eqref{R:orth}.
Finally, the estimate \eqref{eq:zanz}  follows  from \eqref{solvetilde} and  $\|\widetilde{R}\|_{\Sigma^s}\lesssim_s \sum_{\mathbf{m} \in \mathbf{R}_{\mathrm{min}}}|\mathbf{z}^{\mathbf{m}}|$.
\end{proof}

\begin{remark}
The proof of Proposition \ref{prop:rp_pre_galilei} is rather involved because we are trying to construct $\varphi [\omega ,\mathbf{z}]$ in a neighborhood of $\omega_*$.
However, for the Fermi Golden Rule assumption (H7), it suffices to know $G_{\mathbf{m}}$, which can be constructed in much simple manner because we only have to consider $\omega=\omega_*$.
Indeed, in step 1, Claim \ref{claim:Lambda0}, we have
\begin{align*}
\widetilde{v}_{l\mathbf{m}+}&=4\|\varphi_{\omega_*}\|_{L^2}^{-2}\<K_{\mathbf{m}+}[\omega_*],\partial_{x_l}\varphi_{\omega_*}\>,\\
\widetilde{\theta}_{\mathbf{m}-}&=-\|\varphi_{ \omega_*}\|_{L^2}^{-2}\<K_{\mathbf{m}-}[\omega_{*}],\varphi_{ \omega_*}\>,
\end{align*}
and
\begin{align*}
\varphi_{\mathbf{m}\pm}=-L_{\omega_* \pm}^{-1}P_{0\pm}[\omega_*]\sigma_3K_{\mathbf{m}\pm}[\omega_*],
\end{align*}
where $P_{0\pm}$ are the orthogonal projection on $\mathrm{Ker}(L_{\omega\pm})$.
Similarly, in Claim \ref{claim:Lambdaj}, we have $\lambda_{k\mathbf{m}}$ given by \eqref{eq:lambdakm} and
\begin{align*}
\phi_{\mathbf{m}}[\omega_*]=(\mathcal{H}_{\omega_*}-\lambda_j(\omega_*))^{-1}P_j[\omega_*]\widetilde{K}_{\mathbf{m}}[\omega_*],
\end{align*}
and in Claim \ref{claim:other}, $\phi_{\mathbf{m}}=-(\mathcal{H}_{\omega_*}-\lambda(\omega_*,\mathbf{m}))^{-1}\sigma_3\widetilde{K}_{\mathbf{m}}[\omega_*]$.
Thus, we can inductively define $\widetilde{G}_{\mathbf{m}}$ in a very explicit manner using the above formulas by the r.h.s.\ of \eqref{eq:Km} with $\omega=\omega_*$ and $\mathbf{m}\in \mathbf{R}_{\mathrm{min}}$.
Finally, since the higher order correction of $\varphi[\omega,\mathbf{z}]$ only affects $\widetilde{R}_1$, we can take a projection of $\widetilde{G}_{\mathbf{m}}$ as step 2, but $\varphi$ replaced by $\varphi_{\omega_*}$ to obtain ${G}_{\mathbf{m}}$.

\end{remark}

\section{Modulation}

For $(\theta,\mathbf{y},\mathbf{v})\in \R^{1+3+3}$, we define  the Galilean transformations (with gauge rotation) by
\begin{align*}
\(G_{\theta,\mathbf{y},\mathbf{v}}u\)(t,x):=e^{\im \theta} e^{\im \frac{1}{2}\mathbf{v}\cdot (x-\mathbf{y})}u(t,x-\mathbf{y}).
\end{align*}

It is well known that the  NLS \eqref{scalarNLS} in invariant under Galilean transformations.
\begin{lemma}\label{lem:Gal}
Suppose $u$ satisfies
\begin{align*}
\im \partial_t u = -\Delta u + g(|u|^2)u -r,
\end{align*}
for some $r=r(t,x)$.
Then, for any $(\theta,\mathbf{y},\mathbf{v})\in \R^{1+3+3}$, $v(t,x):=\(G_{\theta + \frac{1}{4}|\mathbf{v}|^2t,\mathbf{y}+\mathbf{v}t,\mathbf{v}}u\)(t,x)$ solves
\begin{align*}
\im \partial_t v = -\Delta v + g(|v|^2)v - G_{\theta + \frac{1}{4}|\mathbf{v}|^2t,\mathbf{y}+\mathbf{v}t,\mathbf{v}}r.
\end{align*}
\end{lemma}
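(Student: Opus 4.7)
The plan is a direct computation verifying that the Galilean-plus-gauge action $G_{\theta+\frac{1}{4}|\mathbf{v}|^2 t,\mathbf{y}+\mathbf{v}t,\mathbf{v}}$ conjugates the NLS operator $\im\partial_t+\Delta-g(|\cdot|^2)$ to itself. Since the whole equation is linear in $r$, it suffices to treat the homogeneous case $r\equiv 0$ and then add back the inhomogeneous term; indeed, the map $u\mapsto G_{\theta+\frac{1}{4}|\mathbf{v}|^2 t,\mathbf{y}+\mathbf{v}t,\mathbf{v}}u$ commutes with multiplication by pointwise functions in the obvious way and the transformation of $r$ in the conclusion is simply the same $G$ applied pointwise.

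First I would introduce the shorthand $\alpha(t):=\theta+\frac{1}{4}|\mathbf{v}|^2 t$ and $\mathbf{b}(t):=\mathbf{y}+\mathbf{v}t$, so $v(t,x)=e^{\im\alpha(t)}e^{\im\frac12\mathbf{v}\cdot(x-\mathbf{b}(t))}u(t,x-\mathbf{b}(t))$. Compute $\im\partial_t v$ using the product rule: the three sources of time dependence are the phase $e^{\im\alpha}$ (producing $-\dot\alpha v=-\frac{1}{4}|\mathbf{v}|^2 v$ after the $\im\partial_t$ is applied), the modulation $e^{\im\frac12\mathbf{v}\cdot(x-\mathbf{b})}$ (producing $\frac12\mathbf{v}\cdot\dot{\mathbf{b}}\,v=\frac{1}{2}|\mathbf{v}|^2 v$), and the argument shift of $u$, which gives $e^{\im\alpha}e^{\im\frac12\mathbf{v}\cdot(x-\mathbf{b})}\bigl(\im\partial_t u-\im\mathbf{v}\cdot\nabla_x u\bigr)$. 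Substituting the equation for $u$ then yields
\begin{align*}
\im\partial_t v=\tfrac14|\mathbf{v}|^2 v+e^{\im\alpha}e^{\im\frac12\mathbf{v}\cdot(x-\mathbf{b})}\bigl(-\Delta u+g(|u|^2)u-r-\im\mathbf{v}\cdot\nabla_x u\bigr).
\end{align*}

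Next, I would compute $-\Delta v$ by differentiating the product $e^{\im\frac12\mathbf{v}\cdot(x-\mathbf{b}(t))}u(t,x-\mathbf{b}(t))$ twice in $x$. The cross term produces exactly $\im\mathbf{v}\cdot\nabla_x u$, the pure modulation term produces $-\frac14|\mathbf{v}|^2 v$, and the second derivative of $u$ produces $-\Delta u$; putting these together with the appropriate signs gives
\begin{align*}
-\Delta v=\tfrac14|\mathbf{v}|^2 v+e^{\im\alpha}e^{\im\frac12\mathbf{v}\cdot(x-\mathbf{b})}\bigl(-\Delta u-\im\mathbf{v}\cdot\nabla_x u\bigr).
\end{align*}
Finally, since the modulus is preserved, $g(|v|^2)=g(|u|^2)$ pointwise, and hence $g(|v|^2)v=e^{\im\alpha}e^{\im\frac12\mathbf{v}\cdot(x-\mathbf{b})}g(|u|^2)u$. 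Subtracting the expression for $-\Delta v+g(|v|^2)v$ from the one for $\im\partial_t v$ cancels the $\frac14|\mathbf{v}|^2 v$, the $-\Delta u$, the $-\im\mathbf{v}\cdot\nabla_x u$ and the $g(|u|^2)u$ contributions, leaving exactly $-e^{\im\alpha}e^{\im\frac12\mathbf{v}\cdot(x-\mathbf{b})}r=-G_{\alpha,\mathbf{b},\mathbf{v}}r$, which is the claim.

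There is no real obstacle; the only point to watch is bookkeeping of the constant $\frac{1}{4}|\mathbf{v}|^2$ in the phase $\alpha(t)$, which is precisely the factor needed to kill the leftover $\frac14|\mathbf{v}|^2 v$ produced jointly by the time-dependent modulation and the Laplacian of the spatial phase. Choosing any other constant would leave an unwanted $c|\mathbf{v}|^2 v$ term in the final identity, so the statement is sharp in this sense.
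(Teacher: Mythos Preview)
Your computation is correct and complete. The paper itself does not give a proof but simply refers the reader to a standard text (Linares--Ponce), so your direct verification is in fact more detailed than what the paper provides; the argument is the standard one and there is nothing to add.
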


\begin{proof}
See, e.g.\ Chapter 5 of \cite{LPBook}.
\end{proof}

We extend the refined profile $\varphi[\omega,\mathbf{z}]$ given in Proposition \ref{prop:rp_pre_galilei} by Galilean and gauge symmetry,
\begin{align}\label{def:rp}
\varphi[\theta,\varpi,\mathbf{y},\mathbf{v},\mathbf{z}]:=G_{\theta,\mathbf{y},\mathbf{v}}\varphi[\omega,\mathbf{z}],\ \text{where}\ \varpi=\omega-\omega_*
\end{align}
We also introduce the variable $\Theta:=(\theta,\varpi,\mathbf{y},\mathbf{v},\mathbf{z})$ and write
$\varphi[\Theta]:=\varphi[\theta,\varpi,\mathbf{y},\mathbf{v},\mathbf{z}]$.
Notice that we have $\varphi[0]=\varphi_{\omega_*}$.

In the following, for a smooth function $F$ of $\Theta$ (in particular $\varphi$), we write
\begin{align*}
DF[\Theta]\Xi:=\left.\frac{d}{d\epsilon}\right|_{\epsilon=0}F[\Theta+\epsilon\Xi].
\end{align*}
The 2nd derivative w.r.t.\ $\Theta$ will be  expressed by $D^2 F[\Theta](\Xi_1,\Xi_2)$.
We define $D_{\mathbf{y}}$ and $D_{\mathbf{v}}$ similarly.
Recall that we have already defined $D_{\mathbf{z}}$.
\begin{proposition}\label{prop:full_rp}
For $     \varphi = \varphi [\Theta]$,  we have
\begin{align}\label{eq:full_rp}
\im D\varphi  \widetilde{\Theta}+ \mathcal{R} = \(-\Delta+\omega_{*}\) \varphi + g(|\varphi|^2)\varphi,
\end{align}
where
\begin{align}\label{def:tildeTheta}
\widetilde{\Theta}[\varpi,\mathbf{v},\mathbf{z}]=\(\frac{1}{4}|\mathbf{v}|^2+\widetilde{\theta}[\omega_*+\varpi,\mathbf{z}]-\omega_*,\widetilde{\omega}[\omega_*+\varpi,\mathbf{z}],\mathbf{v}+\widetilde{\mathbf{y}}[\omega_*+\varpi,\mathbf{z}],\widetilde{\mathbf{v}}[\omega_*+\varpi,\mathbf{z}],\widetilde{\mathbf{z}}[\omega_*+\varpi,\mathbf{z}]\)
\end{align} 
and
\begin{align}\label{def:Rgali}
\mathcal{R}=\mathcal{R}[\Theta]=\mathcal{R}[\theta,\omega_*+\varpi,\mathbf{y},\mathbf{v},\mathbf{z}]=G_{\theta,\mathbf{y},\mathbf{v}}\mathcal{R}[\omega_*+\varpi,\mathbf{z}],
\end{align}
for   $x[\omega,\mathbf{z}]$ given in Proposition \ref{prop:rp_pre_galilei} for $x=\widetilde{\theta},  \widetilde{\mathbf{v}}, \widetilde{\mathbf{z}}$  and $\mathcal{R}$.
Furthermore,
\begin{align}\label{eq:iR_in_cont}
\forall \Xi \in \R^{1+1+3+3}\times \C^N \text{    we have } \<\mathcal{R}[\Theta],D\varphi[\Theta]\Xi\>=0.
\end{align}
\end{proposition}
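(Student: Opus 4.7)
The plan is to transport the defining identity \eqref{eq:phi_pre_gali} for $\mathcal{R}[\omega,\mathbf{z}]$ through the Galilean--gauge symmetry and read off \eqref{eq:full_rp} by collecting coefficients of each $\Theta$-derivative of $\varphi[\Theta]$.

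First I would record the elementary conjugation formulas for $G = G_{\theta,\mathbf{y},\mathbf{v}}$, obtained by direct differentiation of $(Gu)(x)=e^{\im\theta}e^{\im\mathbf{v}\cdot(x-\mathbf{y})/2}u(x-\mathbf{y})$:
\begin{align*}
G(-\Delta u) &= -\Delta(Gu) - \tfrac{|\mathbf{v}|^2}{4}(Gu) + \im\mathbf{v}\cdot G\nabla_x u, \qquad G\bigl(g(|u|^2)u\bigr) = g(|Gu|^2)(Gu), \\
G\nabla_x u &= -\nabla_\mathbf{y}(Gu) - \tfrac{\im\mathbf{v}}{2}(Gu), \qquad G(x_l u) = (x_l-y_l)(Gu) = -2\im\,\partial_{v_l}(Gu),
\end{align*}
together with $\partial_\theta(Gu)=\im(Gu)$, and the facts that $G$ commutes with $\partial_\omega$ and with $D_{\mathbf{z}}$.

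Next I would rearrange \eqref{eq:phi_pre_gali} as $-\Delta\varphi+g(|\varphi|^2)\varphi=\mathcal{R}[\omega,\mathbf{z}]-\widetilde\theta\varphi+\im\widetilde\omega\partial_\omega\varphi-\im\widetilde{\mathbf{y}}\cdot\nabla_x\varphi-\tfrac12\widetilde{\mathbf{v}}\cdot x\,\varphi+\im D_\mathbf{z}\varphi\,\widetilde{\mathbf{z}}$, apply $G$ term by term, and use the identities above to replace the internal operators $\nabla_x$, multiplication by $x_l$ and $-\Delta$ by expressions in the external operators $\partial_\theta,\partial_\varpi,\nabla_\mathbf{y},\nabla_\mathbf{v},D_\mathbf{z}$ acting on $\varphi[\Theta]=G\varphi[\omega_*+\varpi,\mathbf{z}]$. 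In the resulting expression the $-\tfrac{|\mathbf{v}|^2}{4}(Gu)$ from the conjugated Laplacian regroups with $\widetilde\theta\varphi$ and the target $\omega_*\varphi$ into the $\theta$-component $\tfrac{|\mathbf{v}|^2}{4}+\widetilde\theta-\omega_*$ of $\widetilde\Theta$; the $\im\mathbf{v}\cdot G\nabla_x\varphi$ term combines with $-\im\widetilde{\mathbf{y}}\cdot G\nabla_x\varphi$ to produce the coefficient $\mathbf{v}+\widetilde{\mathbf{y}}$ in front of $\nabla_\mathbf{y}\varphi[\Theta]$; the term $-\tfrac12\widetilde{\mathbf{v}}\cdot x\,\varphi$ becomes $-\im\widetilde{\mathbf{v}}\cdot\nabla_\mathbf{v}\varphi[\Theta]$ via $(x_l-y_l)(Gu)=-2\im\partial_{v_l}(Gu)$; and the $\widetilde\omega\partial_\omega$ and $D_\mathbf{z}\widetilde{\mathbf{z}}$ pieces pass through $G$ unchanged. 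Matching the assembled expression against $\im D\varphi[\Theta]\widetilde\Theta$ with $\widetilde\Theta$ from \eqref{def:tildeTheta} yields \eqref{eq:full_rp}.

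For the orthogonality \eqref{eq:iR_in_cont} the key observation is that the real $L^2$ pairing is $G$-invariant, $\langle Gu,Gv\rangle=\langle u,v\rangle$, because the amplitude factors cancel and the shift $x\mapsto x-\mathbf{y}$ has unit Jacobian. By the same conjugation formulas the $G$-images of the five generators $\im\varphi$, $\partial_\omega\varphi$, $\partial_{x_l}\varphi$, $\im x_l\varphi$, $\partial_{z_{jA}}\varphi$ in \eqref{R:orth} are, up to an explicit invertible linear change of basis, the tangent frame $\partial_\theta\varphi[\Theta],\partial_\varpi\varphi[\Theta],\partial_{y_l}\varphi[\Theta],\partial_{v_l}\varphi[\Theta],\partial_{z_{jA}}\varphi[\Theta]$ of the refined-profile manifold. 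Hence every pairing $\langle\mathcal{R}[\Theta],D\varphi[\Theta]\Xi\rangle$ is a linear combination of pairings $\langle G\mathcal{R}[\omega_*+\varpi,\mathbf{z}],G\psi_j\rangle=\langle\mathcal{R}[\omega_*+\varpi,\mathbf{z}],\psi_j\rangle$, all of which vanish by \eqref{R:orth}.

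The main obstacle is the bookkeeping in the second paragraph: the boost produces several cross-terms of the form $\tfrac12\mathbf{v}\cdot(\cdot)\varphi$ arising from the $-\tfrac{|\mathbf{v}|^2}{4}$ correction in $G(-\Delta)$, from the $-\tfrac{\im\mathbf{v}}{2}$ shift in $G\nabla_x$, and from $\partial_\mathbf{y}\varphi[\Theta]$ itself, and one must verify that they recombine exactly into the $\tfrac{|\mathbf{v}|^2}{4}$ in $\widetilde\Theta_\theta$ and the $\mathbf{v}$ in $\widetilde\Theta_\mathbf{y}$ with no leftover. Everything else is then a routine identification of coefficients.
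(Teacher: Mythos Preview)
Your approach is correct and constitutes a genuinely different route from the paper's. The paper does not conjugate each operator through $G$ by hand; instead it interprets \eqref{eq:phi_pre_gali} as saying that a suitable time-dependent profile (with $\dot x(0)=\widetilde x$ and initial data at $\theta=\mathbf y=\mathbf v=0$) solves $\im\partial_t u=-\Delta u+g(|u|^2)u-\mathcal R$, and then applies the Galilean-invariance Lemma~\ref{lem:Gal} as a black box to transport this identity to general $(\theta,\mathbf y,\mathbf v)$, reading off $\widetilde\Theta$ from the modified phase $\theta+\tfrac14|\mathbf v|^2t$ and drift $\mathbf y+\mathbf v t$. Your static conjugation unpacks exactly what that lemma hides; it is more elementary and self-contained, at the cost of the bookkeeping you flag. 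The paper's route is shorter because the cross-terms you worry about are precisely those already absorbed in the proof of Lemma~\ref{lem:Gal}.

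One point worth noting when you carry out the cancellation: the cross-terms do \emph{not} recombine with strictly no leftover. Since $\partial_{y_l}\varphi[\Theta]=-G\partial_{x_l}\varphi-\tfrac{\im v_l}{2}\varphi[\Theta]$, the term $\im(\mathbf v+\widetilde{\mathbf y})\cdot\nabla_{\mathbf y}\varphi[\Theta]$ produces, besides the expected $\tfrac{|\mathbf v|^2}{2}\varphi[\Theta]$, an extra $\tfrac12(\widetilde{\mathbf y}\cdot\mathbf v)\varphi[\Theta]$. Strictly speaking this should appear in the $\theta$-component of $\widetilde\Theta$; the paper's formula \eqref{def:tildeTheta} omits it (and its own proof glosses over the same cocycle, coming from $G_{\theta_1,\mathbf y_1,\mathbf v_1}G_{\theta_2,\mathbf y_2,\mathbf v_2}=G_{\theta_1+\theta_2+\mathbf v_1\cdot\mathbf y_2/2,\,\mathbf y_1+\mathbf y_2,\,\mathbf v_1+\mathbf v_2}$). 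This is harmless for everything downstream because $\widetilde{\mathbf y}=\widetilde{\mathbf y}_{\mathcal R}$ already satisfies \eqref{eq:Ranz}, so $\tfrac12\mathbf v\cdot\widetilde{\mathbf y}$ is a higher-order correction absorbed in the later estimates; but you should be aware that your careful computation will detect it.
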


\begin{proof}
From Proposition \ref{prop:rp_pre_galilei}, \eqref{def:rp} and setting $\left.\frac{d}{dt}\right|_{t=0}{\mathrm{x}}=\widetilde{\mathrm{x}}$, for $\mathrm{x}=\theta,\omega,\mathbf{y},\mathbf{v}$ and $\mathbf{z}$, we have
\begin{align*}
\left.\im \partial_t\right|_{t=0} \varphi&=\left.\im \partial_t\right|_{t=\theta=0,\mathbf{y}=\mathbf{v}=0} G_{\theta,\mathbf{y},\mathbf{v}} \varphi[\omega,\mathbf{z}]=-\widetilde{\theta}\varphi+\im \widetilde{\omega}\partial_{\omega}\varphi -\im \widetilde{\mathbf{y}}\cdot\nabla_x  \varphi-\frac{1}{2}x\cdot \widetilde{\mathbf{v}}\varphi+\im D_{\mathbf{z}}\varphi\\&
=-\Delta \varphi + g(|\varphi|^2)\varphi -R[0,\varpi,0,0,\mathbf{z}],
\end{align*}
where $\varphi=\varphi[0,\varpi,0,0,\mathbf{z}]$.
Thus, by Lemma \ref{lem:Gal}, setting $v=\varphi[\theta+\frac{1}{4}|\mathbf{v}|^2t,\varpi,\mathbf{y}+\mathbf{v} t, \mathbf{v},\mathbf{z}]$, we have
\begin{align*}
\left.\im \partial_t\right|_{t=0} v = -\Delta \varphi[\Theta] + g(|\varphi[\Theta]|^2)\varphi[\Theta]-\mathcal{R}[\Theta],
\end{align*}
and
\begin{align*}
\left.\im \partial_t\right|_{t=0} v &= -\(\widetilde{\theta}+\frac{1}{4}|\mathbf{v}|^2\)\varphi + \im \widetilde{\omega}\partial_{\omega}\varphi - \im (\widetilde{\mathbf{y}}+\mathbf{v})\cdot \nabla_{\mathbf{y}}\varphi-\frac{1}{2}x\cdot \widetilde{\mathbf{v}}\varphi + \im D_{\mathbf{z}}\varphi\\&
=\im D\varphi \widetilde{\Theta} - \omega_*\varphi,
\end{align*}
where $\widetilde{\Theta}$ is given in \eqref{def:tildeTheta}.
Therefore we have \eqref{eq:full_rp}.

Finally, \eqref{eq:iR_in_cont} follows from \eqref{R:orth}, \eqref{def:rp}, \eqref{def:Rgali} and
\begin{align*}
\partial_{\theta}G_{\theta,\mathbf{y},\mathbf{v}}f=\im G_{\theta,\mathbf{y},\mathbf{v}}f, \partial_{y_l}G_{\theta,\mathbf{y},\mathbf{v}}f = - \partial_{x_l} G_{\theta,\mathbf{y},\mathbf{v}}f,\ \partial_{v_l}G_{\theta,\mathbf{y},\mathbf{v}}f=\im \frac{1}{2}(x_l-y_l) G_{\theta,\mathbf{y},\mathbf{v}}f,
\end{align*}
for $l=1,2,3$.
\end{proof}

We set
\begin{align}\label{def:H}
H:=H[\Theta]&:=-\Delta +\omega_*+ \left.\frac{d}{d\epsilon}\right|_{\epsilon=0}g(|\varphi[\Theta] + \epsilon \cdot|^2)(\varphi[\Theta] +\epsilon \cdot)\\&=-\Delta + \omega_{*} + g(|\varphi[\Theta]^2|)+g'(|\varphi[\Theta]|^2)|\varphi[\Theta]|^2+g'(|\varphi[\Theta]^2|\varphi[\Theta])^2\mathrm{C},\nonumber
\end{align}
where, $\mathrm{C}u=\overline{u}$.
Notice that due to the complex conjugation $\mathrm{C}$, $H$ is not $\C$-linear but only $\R$-linear.
Furthermore, we can easily check $\<H u,v\>=\<u,Hv\>$.
Differentiating \eqref{eq:full_rp}, we have
\begin{align}\label{eq:HDphi}
HD\varphi \Xi = \im D^2\varphi(\widetilde{\Theta},\Xi) + \im D\varphi(D\widetilde{\Theta}\Xi)+D\mathcal{R}\Xi.
\end{align}

We define the ``continuous space" around $\varphi[\Theta]$ by
\begin{align*}
\mathcal{H}_{\mathrm{c}}[\Theta]:=\{u\in H^1\ |\ \forall \Xi\in \R^{1+1+3+3}\times \C^N,\ \<\im u, D\varphi[\Theta]\Xi\>=0\}.
\end{align*}

\begin{remark}
The condition \eqref{eq:iR_in_cont} can be rephrased as $\im \mathcal{R}[\Theta]\in \mathcal{H}_{\mathrm{c}}[\Theta]$.
\end{remark}

\begin{remark}
Notice that $\mathcal{H}_{\mathrm{c}}$ is an $\R$  vector space and not a $\C$ vector space.
\end{remark}

\begin{lemma}[Modulation lemma]\label{lem:mod}
There exists $\delta>0$ s.t.\ if
\begin{align}\label{eq:asslemod}
\inf_{\theta,\mathbf{y}}\|u-\varphi[\theta,0,\mathbf{y},0,0]\|_{H^1}<\delta,
\end{align}
then there exists $\Theta(u)=\(\theta(u), \omega(u),  \mathbf{y}(u), \mathbf{v}(u), \mathbf{z}(u)\)$ which depends on $u$ smoothly, such that we have
\begin{align}\label{eq:etaorth}
\eta(u):=u-\varphi[\Theta(u)]\in \mathcal{H}_{\mathrm{c}}[\Theta(u)].
%
\end{align}
Furthermore, we have
\begin{align}\label{eq:modest}
|\varpi(u)|+\|\mathbf{v}(u)\|+\|\mathbf{z}(u)\| + \|\eta(u)\|_{H^1}\lesssim \delta.
\end{align}
\end{lemma}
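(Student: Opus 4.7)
The strategy is to set up the orthogonality conditions defining $\mathcal H_{\mathrm c}[\Theta]$ as a finite system of equations in the $(8+2N)$ real parameters $\Theta=(\theta,\varpi,\mathbf y,\mathbf v,\mathbf z)$, and to solve them by the implicit function theorem. Fix a basis $\{\Xi_k\}_{k=1}^{8+2N}$ of $\R^{1+1+3+3}\times\C^N$ (viewed as a real vector space) and define
\begin{equation*}
F:\R^{8+2N}\times H^1\longrightarrow\R^{8+2N},\qquad F(\Theta,u)_k:=\<\im(u-\varphi[\Theta]),\,D\varphi[\Theta]\Xi_k\>.
\end{equation*}
Then the condition $\eta(u)\in\mathcal H_{\mathrm c}[\Theta]$ is precisely $F(\Theta,u)=0$, and trivially $F(\Theta_0,\varphi[\Theta_0])=0$ for every $\Theta_0$. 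I would apply the implicit function theorem near each base point $\Theta_0=(\theta_0,0,\mathbf y_0,0,0)$ for which $\varphi[\Theta_0]$ nearly realizes the infimum in \eqref{eq:asslemod}.

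The crucial step is to show that the Jacobian
\begin{equation*}
\mathcal A_{kk'}\;:=\;\partial_{\Theta_{k'}}F(\Theta_0,\varphi[\Theta_0])_k\;=\;-\<\im D\varphi[\Theta_0]\Xi_{k'},\,D\varphi[\Theta_0]\Xi_k\>
\end{equation*}
is invertible, uniformly in $(\theta_0,\mathbf y_0)$. The Galilean--gauge covariance built into \eqref{def:rp} reduces this to checking invertibility at $\Theta_0=0$. Using the explicit derivatives at $\Theta=0$, namely $\partial_\theta\varphi=\im\varphi_{\omega_*}$, $\partial_\varpi\varphi=\partial_\omega\varphi_\omega|_{\omega_*}$, $\partial_{y_l}\varphi=-\partial_{x_l}\varphi_{\omega_*}$, $\partial_{v_l}\varphi=\tfrac{\im}{2}x_l\varphi_{\omega_*}$, together with $\partial_{z_{jR}}\varphi=\xi_{j+}+\xi_{j-}$ and $\partial_{z_{jI}}\varphi=\im(\xi_{j+}-\xi_{j-})$, one sees that $\mathcal A$ is block-diagonal in the splitting (soliton parameters)$\,\oplus\,(\mathbf z$-parameters): the cross block vanishes because the nonzero eigenvectors $\xi_j$ of $\mathcal H_{\omega_*}$ are symplectically orthogonal to the generalized kernel $N_g(\mathcal H_{\omega_*})$ listed in \eqref{eq:Ng}. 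The soliton block then further splits into a $2\times 2$ block pairing $(\theta,\varpi)$, whose determinant is proportional to $\partial_\omega\|\varphi_\omega\|_{L^2}^2|_{\omega_*}\neq 0$ by (H2), and a $6\times 6$ block pairing $(\mathbf y,\mathbf v)$, whose determinant is a nonzero multiple of $\|\varphi_{\omega_*}\|_{L^2}^6$.

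I expect the main technical obstacle to lie in the $\mathbf z$-block. After the change of variables relating $(z_{jR},z_{jI})$ to $\xi_{j\pm}$, this $(2N)\times(2N)$ real matrix must be identified with a real form of the Krein pairing $(\sigma_3\xi_j,\xi_k)=\delta_{jk}$ from \eqref{eq:krein}; the bookkeeping between the $\R$-bilinear pairing $\<\cdot,\cdot\>=\Re\int\cdot\,\overline{\cdot}$ and the complex structure of $\C^2$-valued eigenvectors has to be carried out carefully, and it is exactly the positivity of the Krein signature that makes the resulting quadratic form nondegenerate.

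Once $\mathcal A$ is shown invertible, with bounds independent of $(\theta_0,\mathbf y_0)$, the implicit function theorem supplies a smooth map $u\mapsto\Theta(u)$ on some $H^1$-ball of radius $\delta$ about $\varphi[\Theta_0]$, with $\Theta(\varphi[\Theta_0])=\Theta_0$. Covering the set of $u$ satisfying \eqref{eq:asslemod} by such neighbourhoods and using local uniqueness of $\Theta(u)$ patches these together into a globally defined smooth map. The bound \eqref{eq:modest} follows from Lipschitz continuity of the implicit function and the identity $F(\Theta(u),u)=F(\Theta_0,\varphi[\Theta_0])+O(\|u-\varphi[\Theta_0]\|_{H^1})=0$, which gives $\|\Theta(u)-\Theta_0\|\lesssim\|u-\varphi[\Theta_0]\|_{H^1}\lesssim\delta$, and hence in turn $\|\eta(u)\|_{H^1}\lesssim\delta$.
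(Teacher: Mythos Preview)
Your proposal is correct and is exactly the standard implicit function theorem argument the paper has in mind: the paper's own proof consists of the single sentence ``Since this is standard, we skip it.'' In fact the Jacobian matrix $\mathcal A$ you compute is the same matrix $\mathcal A[\omega,\mathbf z]$ (at $\omega=\omega_*$, $\mathbf z=0$) that appears in the 2nd step of the proof of Proposition~\ref{prop:rp_pre_galilei}, where the paper remarks that its invertibility ``is well known and elementary'' from (H2).
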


\begin{proof}
Since this is standard, we skip it.
\end{proof}

From the orbital stability Proposition \ref{prop:os}, for any solution $u$ of \eqref{scalarNLS} with $\|u(0)-\varphi_{\omega_*}\|_{H^1}$ sufficiently small, the assumption of Lemma \ref{lem:mod} is satisfied for all $t\geq 0$.
Thus, we apply Lemma \ref{lem:mod} to $e^{-\im \omega_* t}u(t)$ (which also satisfies \eqref{eq:asslemod}) and obtain $x(t):=x(e^{-\im \omega_* t}u(t))$ for all $t\geq 0$, where  $x=\Theta, \theta,\varpi,\mathbf{y},\mathbf{v}, \mathbf{z}$ and $\eta$.
We will also use $\dot{x}:=\frac{d}{dt}x(e^{-\im \omega_* t}u(t))$ for $x=\Theta,\theta,\varpi,\mathbf{y},\mathbf{v}, \mathbf{z}$.

We now substitute
\begin{align}\label{u=phi+eta}
u=e^{\im \omega_* t}\(\varphi[\Theta]+\eta\),
\end{align}
 into \eqref{scalarNLS}.
Then, from \eqref{eq:full_rp}, we have
\begin{align}\label{eq:modnls}
\im \partial_t \eta + \im D\varphi[\Theta] (\dot{\Theta}-\widetilde{\Theta})=H[\Theta] \eta + F + \mathcal{R}[\Theta],
\end{align}
where
\begin{align}\label{def:F}
F=g(|\varphi+\eta|^2)(\varphi+\eta)-g(|\varphi|^2)\varphi - \left.\frac{d}{d\epsilon}\right|_{\epsilon=0}g(|\varphi+\epsilon \eta|^2)(\varphi+\epsilon\eta).
\end{align}

\section{Proof of Theorem \ref{thm:main}}

We consider the Strichartz space
\begin{align*}
\mathrm{Stz}(I):=L^\infty(I, H^1)\cap L^2(I,W^{1,6}).
\end{align*}

The main estimate of this paper is given by the following Proposition.
\begin{proposition}\label{prop:boot}
There exist $\epsilon_0>0$ and $C_0>0$ s.t.\ if $\epsilon:=\|u(0)-\varphi_{\omega _*}\|_{H^1}<\epsilon_0$ and
\begin{align}\label{eq:boot}
\|\eta\|_{\mathrm{Stz}(0,T)}+\sum_{\mathbf{m} \in \mathbf{R}_{\mathrm{min}}}\|\mathbf{z}^{\mathbf{m}}\|_{L^2(0,T)}\leq C_0 \epsilon,
\end{align}
for some $T>0$, then we have \eqref{eq:boot} with $C_0$ replaced by $C_0/2$.
\end{proposition}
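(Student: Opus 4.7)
The plan is to close the bootstrap by proving two essentially decoupled estimates driven by the modulation system \eqref{eq:modnls}: a Strichartz bound for $\eta$, and a Fermi Golden Rule $L^2(0,T)$-in-time bound for the resonant monomials $\mathbf{z}^{\mathbf{m}}$ with $\mathbf{m}\in\mathbf{R}_{\mathrm{min}}$. Together with the initial-data estimate $\|\eta(0)\|_{H^1}+\|\mathbf{z}(0)\|\lesssim \epsilon$ obtained by applying Lemma \ref{lem:mod} at $t=0$, these will yield \eqref{eq:boot} with constant $C_0/2$, provided $C_0$ is chosen large and $\epsilon_0$ small.

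The first task is to extract the modulation speeds $\dot\Theta-\widetilde\Theta$. Taking the inner product of \eqref{eq:modnls} with $D\varphi[\Theta]\Xi$ for each tangent direction $\Xi$, and using $\eta\in\mathcal{H}_{\mathrm{c}}[\Theta]$ together with \eqref{eq:iR_in_cont}, one obtains a linear system whose matrix is a small perturbation of the invertible $\mathcal{A}[0]$ appearing in \eqref{solvetilde}. Since $F$ is at least quadratic in $\eta$ by \eqref{def:F} and \eqref{eq:ggrowth}, inversion yields
$$|\dot\Theta-\widetilde\Theta|\lesssim \|\eta\|_{W^{1,6}}\|\eta\|_{H^1}+(\|\mathbf{z}\|+|\varpi|)\|\eta\|_{H^1}.$$
Rewriting the $\eta$-equation as an inhomogeneous Schr\"odinger equation with leading operator $\mathcal{H}_{\omega_*}$ and conjugating by the wave operator $W$ of \eqref{def:waveop}, standard Strichartz and Kato-smoothing estimates give
$$\|\eta\|_{\mathrm{Stz}(0,T)}\lesssim \|\eta(0)\|_{H^1}+\Big\|\sum_{\mathbf{m}\in\mathbf{R}_{\mathrm{min}}}\mathbf{z}^{\mathbf{m}}G_{\mathbf{m}}\Big\|_{X}+\|F\|_{X}+\|\mathcal{R}_1\|_{X}+(\text{modulation error}),$$
for a suitable dual-Strichartz-plus-weighted-$L^2$ norm $X$. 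The quadratic piece $F$ is absorbed by $\|\eta\|_{\mathrm{Stz}}^2\lesssim (C_0\epsilon)^2$, the remainder $\mathcal{R}_1$ is cubic by \eqref{estR}, and the resonant source is bounded via the Kato smoothing estimate $\|\<x\>^{-\sigma}e^{-\im t\mathcal{H}_{\omega_*}}P_cG_{\mathbf{m}}\|_{L^2_tL^2_x}\lesssim \|G_{\mathbf{m}}\|_{\Sigma}$, producing a constant-times $\sum_{\mathbf{m}}\|\mathbf{z}^{\mathbf{m}}\|_{L^2(0,T)}$ contribution.

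The crucial step is the Fermi Golden Rule lower bound on the $\mathbf{z}^{\mathbf{m}}$. Define the linear response
$$\eta_{\mathrm{res}}(t):=\sum_{\mathbf{m}\in\mathbf{R}_{\mathrm{min}}}\mathbf{z}^{\mathbf{m}}(t)\,(\mathcal{H}_{\omega_*}-\lambda(\omega_*,\mathbf{m})-\im 0)^{-1}\mathfrak{G}_{\mathbf{m}},$$
well-defined in the limiting-absorption sense by (H3) and (H4) because $G_{\mathbf{m}}\in\Sigma$. Differentiating the functional $\mathcal{Q}(t):=\<\im \eta_{\mathrm{res}}(t),\eta(t)\>$ along \eqref{eq:modnls}, inserting the ODE for $\mathbf{z}$ coming from the modulation step, and using the Plemelj formula $\mathrm{Im}(\mathcal{H}_{\omega_*}-\mu-\im 0)^{-1}=\pi\,\delta(\mathcal{H}_{\omega_*}-\mu)$, one obtains an identity of the form
$$-\frac{d\mathcal{Q}}{dt}+\sum_{k=1}^{M}\sum_{\mathbf{m},\mathbf{m}'\in\mathbf{R}_{\mathrm{min},k}}\Gamma^{(k)}_{\mathbf{m},\mathbf{m}'}\mathbf{z}^{\mathbf{m}}\overline{\mathbf{z}^{\mathbf{m}'}}=(\text{error}),$$
where $\Gamma^{(k)}$ is a positive multiple of the Gram matrix of the functions $\mathcal{F}(W^*\mathfrak{G}_{\mathbf{m}})_+$ restricted to the sphere $\{|\xi|^2=r_k-\omega_*\}$. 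Hypothesis (H7) says this family is linearly independent, so $\Gamma^{(k)}>0$. Integrating in $t$, using $|\mathcal{Q}|\lesssim \|\mathbf{z}\|\,\|\eta\|_{H^1}\lesssim \epsilon^2$, and bounding the errors by the bootstrap hypothesis gives
$$\sum_{\mathbf{m}\in\mathbf{R}_{\mathrm{min}}}\|\mathbf{z}^{\mathbf{m}}\|_{L^2(0,T)}^2\lesssim \epsilon^2+(C_0\epsilon)^3,$$
which when fed back into the Strichartz estimate closes \eqref{eq:boot}.

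The main obstacle is this FGR step. The refined-profile construction of Proposition \ref{prop:rp_pre_galilei} was designed so that the only leading-order source in \eqref{eq:modnls} is the resonant one $\sum_{\mathbf{m}\in\mathbf{R}_{\mathrm{min}}}\mathbf{z}^{\mathbf{m}}G_{\mathbf{m}}$, with every non-resonant monomial already absorbed into $\varphi[\omega,\mathbf{z}]$; this is the structural replacement for the multi-step normal-form transformations of \cite{Cuccagna11CMP,Bambusi13CMPas}. The delicate points are (i) making the limiting-absorption definition of $\eta_{\mathrm{res}}$ rigorous using the weighted decay $G_{\mathbf{m}}\in\Sigma$ from Proposition \ref{prop:rp_pre_galilei}, and (ii) showing that every error term, coming from $F$, $\mathcal{R}_1$, the difference $H[\Theta]-\mathcal{H}_{\omega_*}$, and the modulation step, is either cubic in the bootstrap quantities or can be absorbed on the left-hand side by a Cauchy--Schwarz with a small constant so that the coercivity furnished by (H7) survives.
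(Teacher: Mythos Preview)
Your overall architecture---Strichartz control of $\eta$ plus an FGR estimate on $\sum_{\mathbf{m}\in\mathbf{R}_{\mathrm{min}}}\|\mathbf{z}^{\mathbf{m}}\|_{L^2}$, closed by bootstrap---is exactly the paper's. But your FGR step has a genuine gap, and the paper's route is different in a way that matters.

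Your proposed functional $\mathcal{Q}(t)=\<\im\eta_{\mathrm{res}}(t),\eta(t)\>$ is not well defined. For $\lambda(\omega_*,\mathbf{m})$ embedded in the essential spectrum, $(\mathcal{H}_{\omega_*}-\lambda-\im 0)^{-1}\mathfrak{G}_{\mathbf{m}}$ is an outgoing wave that lies only in $L^{2,-\sigma}$, not in $L^2$; pairing it against a generic $\eta\in H^1$ diverges, so the claimed bound $|\mathcal{Q}|\lesssim\|\mathbf{z}\|\,\|\eta\|_{H^1}\lesssim\epsilon^2$ is unsupported and the boundary terms after time integration are uncontrolled. The paper avoids this by taking as Lyapunov functional the \emph{localized action} $E(\varphi[\Theta])+\omega_*Q_0(\varphi[\Theta])$, see \eqref{eq:LocEn1}, which depends only on $\Theta$ and is automatically $O(\epsilon^2)$ by orbital stability. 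Its time derivative (via \eqref{eq:full_rp}, \eqref{eq:HDphi}) produces the pairing $-\<\eta,D\mathcal{R}\widetilde\Theta\>$, whose leading part is $\<\widetilde G_{-\theta,-\mathbf{y},0}\zeta,\sum_{\mathbf{m}\in\mathbf{R}_{\mathrm{min}+}}\im\lambda(\omega_*,\mathbf{m})\mathbf{z}^{\mathbf{m}}\mathfrak{G}_{\mathbf{m}}\>$: now the \emph{localized} object $\mathfrak{G}_{\mathbf{m}}\in\Sigma$ is what is paired with $\eta$, and this is always finite. Only at this point is $\widetilde\zeta$ split as $Z+g$ with $Z$ your $\eta_{\mathrm{res}}$; the $\<Z,\mathfrak{G}_{\mathbf{m}}\>$ block gives the Plemelj term and the coercivity from (H7), while $g$ is estimated separately in $L^2L^{2,-\sigma}$ (Lemma~\ref{lem:g}) via the outgoing resolvent bounds of Lemma~\ref{prop:outgoing}. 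So the paper never needs the ill-defined pairing of $Z$ with $\eta$.

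A secondary point: ``conjugating by the wave operator $W$'' does not by itself deal with the time-dependent modulation hidden in $H[\Theta]$. The paper first removes the $(\theta,\mathbf{y})$ modulation and projects, obtaining \eqref{eq:tildeeta} with leading operator $\mathcal{H}_{\omega_*}$ plus the small first-order perturbation $P_*(\sigma_3\dot\theta+\im\sum_l\dot y_l\partial_{x_l})$, and then applies Beceanu's time-dependent Strichartz estimate (Proposition~\ref{prop:Stz}) to absorb that transport term. Your sketch should make explicit which Strichartz machinery handles this moving potential.
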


\begin{proof}[Proof of Theorem \ref{thm:main}]
By Proposition \ref{prop:boot}, we have \eqref{eq:boot} with $T=\infty$.
Then, by standard argument one can show there exists $\eta_+\in H^1$ s.t.\ $\lim_{t\to\infty}\|\eta(t)-e^{-\im (-\Delta +\omega_*) t}\eta_+\|_{H^1} = 0$.
Thus, we have \eqref{thm:main1}.
\begin{remark}
Recall $u$ is given by \eqref{u=phi+eta}.
\end{remark}

\noindent
If we have \eqref{thm:main2}, we will also have \eqref{thm:main3} by orbital stability.
Thus,  it remains to prove \eqref{thm:main2}.
Let $Q_0(u)= \frac{1}{2}\|u\|_{L^2}^2$ and $Q_l(u)=-\frac{1}{2}\<\im \partial_{x_l}u,u\>$ for $l=1,2,3$.
Then, $Q_l$ is   constant under the flow of \eqref{scalarNLS}.
Since $\|\eta(t)-e^{\im t \Delta}\eta_+\|_{H^1}\to 0$ as $t\to +\infty$, we have
\begin{align*}
Q_l(u(t))-Q_l(\varphi[\Theta(t)])-Q_l(\eta_+)\to 0,
\end{align*}
implying convergence of $Q_l(\varphi[\Theta(t)])$.
Furthermore, by $\mathbf{z}\to 0$, we see that the
$Q_l(\varphi[0,\omega(t),0,\mathbf{v}(t),0])$ converge.
Since $Q_0(\varphi[0,\omega(t),0,\mathbf{v}(t),0])=Q_0(\varphi_{\omega(t)})$, we see that $\omega(t)$ must converge.
Finally, since
$Q_l(\varphi[0,\omega(t),0,\mathbf{v}(t),0])=\frac{1}{2}v_l(t)Q_0(\varphi_{\omega(t)})$, we have the convergence of $v_l(t)$, which gives us the conclusion.
\end{proof}

The remainder of the paper is devoted to the proof of Proposition \ref{prop:boot}.
Before going into the details, we note that from Proposition \ref{prop:os} and \eqref{eq:modest}, we have
\begin{align}\label{Linftybound}
\|\varpi\|_{L^\infty}+\|\mathbf{v}\|_{L^\infty}+\|\mathbf{z}\|_{L^\infty} + \|\eta\|_{L^\infty H^1}\lesssim \epsilon,
\end{align}
and from \eqref{Linftybound} and \eqref{def:tildeTheta}, we have
\begin{align}\label{widetildeLinfty}
\|\widetilde{\Theta}\|_{L^\infty}\lesssim \epsilon.
\end{align}
We give  now  estimates for the term $F$ introduced  in \eqref{def:F}.
\begin{lemma}\label{lem:Fbound}
We have
\begin{align}
\|F\|_{L^2 W^{1,6/5}}\lesssim \epsilon\|\eta\|_{\mathrm{Stz} },\label{F:L2est}\\
\|F\|_{L^1L^{6/5}}\lesssim \|\eta\|_{\mathrm{Stz} }^2.\label{F:L1est}
\end{align}
\end{lemma}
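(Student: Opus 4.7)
My approach is based on the observation that $F$ as defined by \eqref{def:F} is precisely the second order Taylor remainder of $u \mapsto N(u) := g(|u|^2)u$ at $u = \varphi$ in the direction $\eta$. I would first write
\begin{align*}
F = \int_0^1 (1-s)\, D^2 N(\varphi + s\eta)(\eta,\eta)\, ds,
\end{align*}
so that, using the growth hypothesis \eqref{eq:ggrowth} (which implies $|D^k N(u)| \lesssim |u|^{5-k}$ for $k=0,1,2,3$), I obtain the pointwise bounds
\begin{align*}
|F| \lesssim |\eta|^2(|\varphi|+|\eta|)^3,\quad |\nabla F| \lesssim |\eta|^2(|\varphi|+|\eta|)^2(|\nabla\varphi|+|\nabla\eta|) + |\eta||\nabla\eta|(|\varphi|+|\eta|)^3.
\end{align*}
Once expanded, $F$ and $\nabla F$ are finite sums of monomials $|\eta|^a|\nabla\eta|^b|\varphi|^c|\nabla\varphi|^d$ with $a+b\geq 2$, $b\in\{0,1\}$, $a+b+c+d=5$, and the two inequalities will then reduce to estimating each such monomial.

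The standing inputs I would use throughout are: the uniform control $\|\varphi\|_{L^\infty_t W^{1,\infty}_x} \lesssim 1$, which follows from \eqref{Linftybound} together with the exponential decay and smoothness of the ground state family, so that any $|\varphi|^c|\nabla\varphi|^d$ sits in every $L^r_x$ with norm uniform in $t$; the three dimensional Sobolev embeddings $H^1_x \hookrightarrow L^p_x$ for $p \in [2,6]$ and, crucially, $W^{1,6}_x \hookrightarrow L^\infty_x$, the latter giving $\|\eta\|_{L^2_tL^\infty_x} \lesssim \|\eta\|_{\mathrm{Stz}}$ with no dependence on the length of $I$; and the orbital stability bound $\|\eta\|_{L^\infty_t H^1_x} \lesssim \epsilon$ from \eqref{Linftybound}, which furnishes the small factor $\epsilon$ required in \eqref{F:L2est}.

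For \eqref{F:L1est} each monomial will be distributed purely among Strichartz-type norms. The typical estimate is
\begin{align*}
\|\eta^2\varphi^3\|_{L^1_tL^{6/5}_x} \le \|\varphi\|_{L^\infty_tL^6_x}^3\|\eta\|_{L^2_tL^6_x}^2 \lesssim \|\eta\|_{\mathrm{Stz}}^2,
\end{align*}
and higher order terms like $|\eta|^5$ contribute $\|\eta\|_{\mathrm{Stz}}^5$, which is absorbed into $\|\eta\|_{\mathrm{Stz}}^2$ by the bootstrap smallness of $\|\eta\|_{\mathrm{Stz}}$. For \eqref{F:L2est} I will instead place one factor of $\eta$ or $\nabla\eta$ in an $L^\infty_t$ slot (to gain the $\epsilon$) and another in an $L^2_t$ slot of $\mathrm{Stz}$ (to gain the $\|\eta\|_{\mathrm{Stz}}$). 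For example, the worst $\nabla F$ monomial $|\varphi|^3|\eta||\nabla\eta|$ would be treated, using $\|\varphi^3\eta\|_{L^3_x} \lesssim \|\eta\|_{L^6_x}$ (thanks to the spatial decay of $\varphi$), by
\begin{align*}
\|\varphi^3\eta\nabla\eta\|_{L^2_tL^{6/5}_x} \lesssim \|\nabla\eta\|_{L^\infty_tL^2_x}\|\eta\|_{L^2_tL^6_x} \lesssim \epsilon\|\eta\|_{\mathrm{Stz}}.
\end{align*}

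The hard part will be the bookkeeping for the numerous monomials appearing in $\nabla F$: for each one a valid H\"older partition in $x$ and $t$ must be exhibited which puts one $\eta$-type factor into an $L^\infty_t$ slot bounded by $\epsilon$ and the remaining $\eta$-type factor(s) into $L^2_t$ slots bounded by $\|\eta\|_{\mathrm{Stz}}$, while never invoking a non-Strichartz norm such as $\|\nabla\eta\|_{L^2_tL^2_x}$ whose size would grow with the length of $I$. The embedding $W^{1,6}_x \hookrightarrow L^\infty_x$ available in three dimensions is the linchpin of this bookkeeping, since it allows a single derivative on $\eta$ to absorb a spatial $L^\infty$ factor from within a Strichartz-controlled $L^2_t$ slot.
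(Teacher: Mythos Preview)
Your proposal is correct and follows essentially the same route as the paper: a second-order Taylor remainder representation of $F$, pointwise bounds derived from the growth hypothesis \eqref{eq:ggrowth}, and then H\"older/Sobolev bookkeeping placing one $\eta$-factor in $L^\infty_t$ (to extract $\epsilon$) and another in $L^2_t$ (to extract $\|\eta\|_{\mathrm{Stz}}$). The only cosmetic difference is that the paper further splits the remainder as $F=\widetilde g(\eta)+\int_0^1\!\int_0^1(1-t)\widetilde g^{\,\prime\prime\prime}(s\varphi+t\eta)\varphi\eta^2\,ds\,dt$, which yields the pointwise bound $|F|\lesssim \langle\eta\rangle^2|\eta|^3+\langle\eta\rangle^2|\varphi||\eta|^2$ in place of your $|F|\lesssim |\eta|^2(|\varphi|+|\eta|)^3$; the subsequent estimates are the same.
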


\begin{proof}
We write $g(|u|^2)u=\widetilde{g}(u)$ and we ignore in this proof   complex conjugation, which is irrelevant to the estimates.
Notice that from $g(0)=0$, we have $\widetilde{g}(0)=\widetilde{g}'(0)=\widetilde{g}''(0)=0$.
Then, we have
\begin{align*}
F=\widetilde{g}(\eta)+\int_0^1\int_0^1(1-t)\widetilde{g}^{'''}(s\varphi+t\eta)\varphi \eta^2\,dsdt.
\end{align*}
Thus, from \eqref{eq:ggrowth}, we have
\begin{align}\label{Fpoint1}
|F|&\lesssim \<\eta\>^{2}|\eta|^3+\<\eta\>^{2}|\varphi||\eta|^2.\\
|\nabla_x F|&\lesssim \<\eta\>^{2}|\eta|^2|\nabla_x \eta| +\<\eta\>\<\nabla_x \eta\> |\varphi| |\eta|^2+\<\eta\>^{2}|\nabla_x\varphi||\eta|^2+\<\eta\>^{2}|\varphi| |\eta| |\nabla_x \eta| \nonumber
\end{align}
From H\"older and Sobolev inequalities, we have
\begin{align}\label{Fpoint11}
\| \<\eta\>^{2}|\eta|^3+\<\eta\>^{2}|\eta|^2|\nabla_x \eta|\|_{L^2L^{6/5}} \lesssim \<\|\eta\|_{L^\infty H^1}\>^2\|\eta\|_{L^\infty H^1}^2\|\eta\|_{L^2 W^{1,6}}
\end{align}
Similarly, just replacing one $\eta$ in the above inequality with $\varphi$ or $\nabla_x\varphi$, we have
\begin{align}\label{Fpoint12}
\|\<\eta\>^{2}|\varphi||\eta|^2+\<\eta\>^{2}|\nabla_x\varphi||\eta|^2+\<\eta\>^{2}|\varphi| |\eta| |\nabla_x \eta| \|_{L^2L^{6/5}}
\lesssim \<\|\eta\|_{L^\infty H^1}\>^2\|\eta\|_{L^\infty H^1}\|\eta\|_{L^2 W^{1,6}}
\end{align}
For the remaining term,
\begin{align}\label{Fpoint13}
\|\<\eta\>\<\nabla_x \eta\> |\varphi| |\eta|^2\|_{L^2L^{6/5}}\lesssim \<\|\eta\|_{L^\infty H^1}\>\|\eta\|_{L^\infty H^1}\|\eta\|_{L^\infty W^{1,6}}
\end{align}
Therefore, from \eqref{Linftybound}, \eqref{Fpoint11}, \eqref{Fpoint12} and \eqref{Fpoint13}, we have \eqref{F:L2est}.

For \eqref{F:L2est}, use \eqref{Fpoint1} and we will have the conclusion.
\end{proof}

Taking the inner product $\<\eqref{eq:modnls},D\varphi\Xi\>$ and using \eqref{eq:HDphi} and \eqref{eq:etaorth}, we have
\begin{align*}
\<\im D\varphi \(\dot{\Theta}-\widetilde{\Theta}\),D\varphi\Xi\>=-\<\eta,\im D^2\varphi(\dot{\Theta}-\widetilde{\Theta},\Xi)\>+\<\eta,D\mathcal{R}\Xi\>+\<F,D\varphi\Xi\>.
\end{align*}
Thus, substituting $\Xi=(1,0,0,0,0),\cdots, (0,0,0,0,\mathbf{e}^{N-})$ and using \eqref{Linftybound},  $\|D\mathcal{R}\Xi\|_{L^\infty\Sigma^1}\lesssim \epsilon$ and Lemma \ref{lem:Fbound} , we obtain
\begin{align}\label{Theta:L2}
\|\dot{\Theta}-\widetilde{\Theta}\|_{L^2}\lesssim \epsilon \|\eta\|_{\mathrm{Stz} }+\|\eta\|_{\mathrm{Stz} }^2.
\end{align}
Also, by \eqref{Fpoint1}, taking the $L^\infty$ norm, we have
\begin{align}\label{eq:ThetaLinfty}
\|\dot{\Theta}-\widetilde{\Theta}\|_{L^\infty}\lesssim \epsilon^2.
\end{align}
The estimate \eqref{eq:ThetaLinfty} combined with \eqref{widetildeLinfty} implies
\begin{align}\label{eq:Thetadot}
\|\dot{\Theta}\|_{L^\infty}\lesssim \epsilon.
\end{align}

\subsection{Estimate of continuous variables}

To use the properties and estimates of $\mathcal{H}_{\omega_*}$, we rewrite \eqref{eq:modnls} as
\begin{align}\label{eq:modnlsvec}
\im \partial_t \zeta + \im D\phi (\dot{\Theta}-\widetilde{\Theta}) = \mathcal{H}[\Theta]\zeta + \sigma_3(\mathfrak{F}+\mathfrak{R}),
\end{align}
where
\begin{align*}
\zeta=\begin{pmatrix}
\eta\\ \overline{\eta}
\end{pmatrix},\
\phi[\Theta]=
\begin{pmatrix}
\varphi[\Theta]\\
\overline{\varphi[\Theta]}
\end{pmatrix},\
\mathfrak{F}=
\begin{pmatrix}
F\\ \overline{F}
\end{pmatrix},\
\mathfrak{R}=
\begin{pmatrix}
\mathcal{R}\\ \overline{\mathcal{R}}
\end{pmatrix},
\end{align*}
and $\mathcal{H}[\Theta]:=\mathcal{H}[\omega_*,\varphi[\Theta]]$.
We set
$
\widetilde{\mathcal{H}}_{\mathrm{c}}[\Theta]:=\{ w={}^t(u\ \overline{u})\in H^1\ |\ u\in \mathcal{H}_{\mathrm{c}}[\Theta]\}
$, $P_*^\perp$ to be the projection on $\sigma_d(\mathcal{H}_{\omega_{*}})$ given by Riesz projection and $P_*:=1-P_*^\perp$.

%

\begin{lemma}\label{lem:R}
There exists $\delta>0$ s.t.\ for $(\varpi,\mathbf{v},\mathbf{z})$ satisfying $|\varpi|+\|\mathbf{v}\|+\|\mathbf{z}\|<\delta$, the projection $P_*$ restricted on $\widetilde{\mathcal{H}}_{\mathrm{c}}[0,\varpi,0,\mathbf{v},\mathbf{z}]$ is invertible.
Furthermore, setting $$R[\varpi,\mathbf{v},\mathbf{z}]:=(\left.P_*\right|_{\widetilde{\mathcal{H}}_{\mathrm{c}}[0,\varpi,0,\mathbf{v},\mathbf{z}]})^{-1},$$
 we have
\begin{align}\label{est:Rminus1}
\|R-1\|_{\Sigma^{*}\to \Sigma}\lesssim |\varpi|+\|\mathbf{v}\|+\|\mathbf{z}\|,\quad
\|\partial_x R\|_{\Sigma^{*}\to \Sigma}\lesssim 1,
\end{align}
for $\ x=\omega,v_l,z_{jA}$, $l=1,2,3$, $j=1,\cdots,N$ and $A=R,I.$
\end{lemma}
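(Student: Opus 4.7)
The plan is to treat this as a finite-dimensional perturbation problem. I would construct $Rf$, for $f$ in the continuous spectral subspace $\mathrm{Ran}(P_*)$, in the form $Rf=f+g$ with $g$ in the finite-dimensional discrete subspace $X_d:=\mathrm{Ran}(P_*^\perp)$. The constraint $Rf\in\widetilde{\mathcal{H}}_{\mathrm{c}}[0,\varpi,0,\mathbf{v},\mathbf{z}]$ then reduces to the finite-dimensional linear equation
\begin{align*}
\<\im g,D\varphi[\Theta]\Xi\>=-\<\im f,D\varphi[\Theta]\Xi\>\quad\forall\ \Xi\in\R^{1+1+3+3}\times\C^N,
\end{align*}
where $\Theta=(0,\varpi,0,\mathbf{v},\mathbf{z})$, and both the invertibility and the estimates follow from a standard analysis of this system.

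The base case $\Theta=0$ is the crucial identification. Using $\varphi[0]=\varphi_{\omega_*}$, $\varphi_{\mathbf{e}_{j\pm}}[\omega_*]=\xi_{j\pm}[\omega_*]$, and computing the Galilei/gauge derivatives gives, modulo harmless signs, $D\varphi[0]\Xi\in\mathrm{span}\{\im\varphi_{\omega_*},\partial_\omega\varphi_{\omega_*},\partial_{x_l}\varphi_{\omega_*},\im x_l\varphi_{\omega_*},\xi_{j+}+\xi_{j-},\im(\xi_{j+}-\xi_{j-})\}$; doubling via $\eta\mapsto(\eta,\overline{\eta})^T$ matches this family with the basis of $N_g(\mathcal{H}_{\omega_*})\oplus\mathrm{span}\{\xi_k,\sigma_1\xi_k\}=X_d$ recorded in \eqref{eq:Ng}. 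Under this identification the real pairing $\<\im\cdot,\cdot\>$ defining $\widetilde{\mathcal{H}}_{\mathrm{c}}[0]$ coincides with the $\sigma_3$-biorthogonality characterizing $\mathrm{Ran}(P_*)$, so $\widetilde{\mathcal{H}}_{\mathrm{c}}[0]=\mathrm{Ran}(P_*)\cap\{(u,\overline{u})^T\}$. In particular $R[0,0,0]=1$ and, for any $f\in\mathrm{Ran}(P_*)$, $\<\im f,D\varphi[0]\Xi\>=0$.

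For small $\Theta$, I would fix bases $\{e_k\}_{k=1}^{d}$ of $X_d$ and $\{\Xi_k\}_{k=1}^{d}$ of $\R^{1+1+3+3}\times\C^N$ with $d=8+2N$. Writing $g=\sum_k a_k e_k$ turns the equation into $M[\Theta]a=b[\Theta,f]$ with $M[\Theta]_{jk}:=\<\im e_k,D\varphi[\Theta]\Xi_j\>$ and $b_j[\Theta,f]:=-\<\im f,D\varphi[\Theta]\Xi_j\>$. The matrix $M[0]$ agrees, up to change of basis, with the matrix $\mathcal{A}[\omega_*,0]$ of Proposition \ref{prop:rp_pre_galilei}, which is non-degenerate by \eqref{eq:krein} and (H2); smoothness of $\Theta\mapsto D\varphi[\Theta]\Xi_j\in\Sigma$ (Propositions \ref{prop:rp_pre_galilei} and \ref{prop:full_rp}) therefore gives $\|M[\Theta]^{-1}\|\lesssim 1$ for $|\varpi|+\|\mathbf{v}\|+\|\mathbf{z}\|$ small. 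Since $\<\im f,D\varphi[0]\Xi_j\>=0$, a Taylor expansion yields
\begin{align*}
|b_j[\Theta,f]|=|\<\im f,(D\varphi[\Theta]-D\varphi[0])\Xi_j\>|\lesssim(|\varpi|+\|\mathbf{v}\|+\|\mathbf{z}\|)\|f\|_{\Sigma^{*}},
\end{align*}
whence $|a_k|\lesssim(|\varpi|+\|\mathbf{v}\|+\|\mathbf{z}\|)\|f\|_{\Sigma^{*}}$ and, as each $e_k\in\Sigma$, $\|Rf-f\|_\Sigma=\|g\|_\Sigma\lesssim(|\varpi|+\|\mathbf{v}\|+\|\mathbf{z}\|)\|f\|_{\Sigma^{*}}$, which is the first estimate in \eqref{est:Rminus1}. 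The derivative bound $\|\partial_x R\|_{\Sigma^{*}\to\Sigma}\lesssim 1$ follows by differentiating the explicit formula $a=M[\Theta]^{-1}b[\Theta,f]$ in $x\in\{\omega,v_l,z_{jA}\}$ and using $\|D^2\varphi[\Theta](\Xi_j,\partial_x)\|_\Sigma\lesssim 1$.

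The main obstacle is the bookkeeping in the base case: carefully matching the real-linear $\<\im\cdot,\cdot\>$-orthogonality defining $\widetilde{\mathcal{H}}_{\mathrm{c}}$ with the $\sigma_3$-biorthogonality that characterizes $\mathrm{Ran}(P_*)$, and verifying non-degeneracy of the resulting Gram matrix on $X_d$ (this is where \eqref{eq:krein} and the Vakhitov--Kolokolov condition (H2) really enter). Once that identification is in place, the rest is a routine finite-dimensional perturbation.
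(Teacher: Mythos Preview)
The paper's proof is simply ``The proof is standard,'' so your proposal is precisely the kind of argument the authors have in mind: write $Rf=f+g$ with $g$ in the finite-dimensional discrete space, reduce the orthogonality constraints defining $\widetilde{\mathcal{H}}_{\mathrm{c}}[\Theta]$ to a square linear system $M[\Theta]a=b[\Theta,f]$, invoke the base-case identification $\widetilde{\mathcal{H}}_{\mathrm{c}}[0]=\mathrm{Ran}(P_*)\cap\{(u,\overline u)^T\}$ to see $M[0]$ is nondegenerate (via (H2) and \eqref{eq:krein}) and $b[0,f]=0$, and then read off both estimates from smoothness in $\Theta$. This is correct and matches the intended argument.

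One small point of bookkeeping worth tightening: your displayed equation mixes a vector $g\in X_d\subset L^2(\R^3,\C^2)$ with the scalar $D\varphi[\Theta]\Xi$; since membership in $\widetilde{\mathcal{H}}_{\mathrm{c}}[\Theta]$ is a condition on the upper component only, the system should be written for the upper component (equivalently, using the $\sigma_3$-pairing on the doubled space), and you should check that the conjugation symmetry $\sigma_1\overline{\,\cdot\,}$ is respected so that $f+g$ stays of the form $(u,\overline u)^T$. This is routine once the bases are chosen compatibly with $\sigma_1$-conjugation and does not affect the substance of the argument.
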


\begin{proof}
The proof is standard.
\end{proof}

We define $\widetilde{G}_{\theta,\mathbf{y},\mathbf{v}}$ by $\widetilde{G}_{\theta,\mathbf{y},\mathbf{v}}w=e^{\im \sigma_3\theta} e^{\im \sigma_3 \frac{1}{2}\mathbf{v}\cdot (x-\mathbf{y})}u(t,x-\mathbf{y})$
and consider
\begin{align}\label{eq:eta_tildeeta_1}
\widetilde{\zeta}:=P_* \widetilde{G}_{-\theta,-\mathbf{y},0}\zeta.
\end{align}
By Lemma \ref{lem:R}, we have
\begin{align}\label{eq:eta_tildeeta_2}
\zeta=\widetilde{G}_{\theta,\mathbf{y},0}R[\varpi,\mathbf{v},\mathbf{z}]\widetilde{\zeta}.
\end{align}
One can also check $\mathcal{H}[\widetilde{\Theta}]=G_{\theta,\mathbf{y},0}\mathcal{H}[0,\varpi,0,\mathbf{v},\mathbf{z}]G_{-\theta,-\mathbf{y},0}$ using \eqref{def:rp}.
From \eqref{eq:eta_tildeeta_1}, \eqref{eq:eta_tildeeta_2} and Lemma \ref{lem:R}, we have
\begin{align}\label{eq:eta_tildeeta_3}
\|\eta\|_{\mathrm{Stz}}\sim\|\zeta\|_{\mathrm{Stz}}\sim \|\widetilde{\zeta}\|_{\mathrm{Stz}}.
\end{align}
Substituting \eqref{eq:eta_tildeeta_2} into \eqref{eq:modnlsvec} and applying $ P_* \widetilde{G}_{-\theta,-\mathbf{y},0}$, we have
\begin{align}\label{eq:tildeeta}
\im \partial_t \widetilde{\zeta} = \mathcal{H}_{\omega_*}  \widetilde{\zeta} + P_*\( \sigma_3\dot{\theta} \widetilde{\zeta}+\im \sum_{l=1}^{3}\dot{y_l}\partial_{x_l}\widetilde{\zeta}\)
+P_* \widetilde{G}_{-\theta,-\mathbf{y},0}\sigma_3\mathfrak{F}
 + P_*\sigma_3 \mathfrak{R}[0,\varpi,0,\mathbf{v},\mathbf{z}]  + R_{\widetilde{\zeta}},
\end{align}
where
\begin{align*}
R_{\widetilde{\zeta}}&=-\im P_* \(\im \sigma_3 \dot{\theta}(R-1)\widetilde{\zeta} -\sum_{l=1}^3\dot{y_l}\partial_{x_l}\((R-1)\zeta\) + \dot{\varpi}(\partial_{\varpi}R)\widetilde{\zeta}+ \(D_{\mathbf{v}}R \dot{\mathbf{v}}\)\widetilde{\zeta}+\(D_{\mathbf{z}}R \dot{\mathbf{z}}\)\widetilde{\zeta}\)\\&
\quad - \im P_*
D\phi[0,\varpi,0,\mathbf{v},\mathbf{z}] (\dot{\Theta}-\widetilde{\Theta})+P_*\(\mathcal{H}[0,\varpi,0,\mathbf{v},\mathbf{z}]-\mathcal{H}_{\omega_*}\)R\widetilde{\zeta}.
\end{align*}

We now recall Beceanu's Strichartz estimate \cite{beceanu}.
We denote by $\mathcal{U}(t,s)$ the evolution operator associated to the equation
\begin{align*}
\im \partial_t w= \mathcal{H}_{\omega_{*}}w + P_{*} \( \sigma_3  a_0(t)w + \im \sum_{l=1}^{3}a_l(t)\partial_{x_l} w\),\ w=P_*w.
\end{align*}

\begin{proposition}\label{prop:Stz}
There exists $\delta>0$ s.t.\ if $\|a_j(t)\|_{L^\infty}<\delta$, then for $u_0=P_*u_0$ and $\mathcal{F}=P_*\mathcal{F}$, we have
\begin{align*}
\|\mathcal{U}(t,0)u_0\|_{\mathrm{Stz} }&\lesssim \|u(0)\|_{H^1},\\
\|\int_0^\cdot \mathcal{U}(t,s)\mathcal{F}(s)\,ds\|_{\mathrm{Stz} }&\lesssim \|\mathcal{F}\|_{L^2W^{1,6/5}}.
\end{align*}
\end{proposition}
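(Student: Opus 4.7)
The plan is to reduce Proposition \ref{prop:Stz} to Beceanu's Strichartz estimates for the free propagator $e^{-\im t\mathcal{H}_{\omega_*}}$ on the continuous spectral subspace, treating the $a_0,a_l$ terms as a small time-dependent perturbation. Beceanu's result \cite{beceanu} gives, for $u_0=P_* u_0$ and $\mathcal{F}=P_*\mathcal{F}$,
\begin{align*}
\|e^{-\im t\mathcal{H}_{\omega_*}}u_0\|_{\mathrm{Stz}} \lesssim \|u_0\|_{H^1}, \qquad
\Bigl\|\int_0^t e^{-\im(t-s)\mathcal{H}_{\omega_*}}\mathcal{F}(s)\,ds\Bigr\|_{\mathrm{Stz}} \lesssim \|\mathcal{F}\|_{L^2 W^{1,6/5}},
\end{align*}
obtained via the intertwining $\mathcal{H}_{\omega_*} W = W\sigma_3(-\Delta+\omega_*)$, the $W^{1,p}$-boundedness of $W^{\pm 1}$, and the classical Strichartz estimates for $e^{-\im t(-\Delta+\omega_*)}$.

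The first step is Duhamel: writing $w=\mathcal{U}(\cdot,0)u_0$ and $\mathbf{a}=(a_1,a_2,a_3)$,
\begin{align*}
w(t) = e^{-\im t\mathcal{H}_{\omega_*}} u_0 -\im \int_0^t e^{-\im(t-s)\mathcal{H}_{\omega_*}} P_*\bigl(\sigma_3 a_0(s) w(s) + \im \mathbf{a}(s)\cdot\nabla_x w(s)\bigr)\,ds,
\end{align*}
with an analogous formula once an inhomogeneous source $\mathcal{F}$ is inserted. I would then set up a contraction on $\mathrm{Stz}(I)$ using the smallness of $\|a_j\|_{L^\infty}<\delta$, with the first term controlled by $\|u_0\|_{H^1}$ via the homogeneous estimate and the $\mathcal{F}$ term by $\|\mathcal{F}\|_{L^2W^{1,6/5}}$ via the inhomogeneous estimate.

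The delicate step is the derivative perturbation $\im \mathbf{a}(t)\cdot\nabla_x w$. To handle it I would apply the time-dependent gauge-Galilean change of variables $\tilde w(t,x):=e^{-\im \sigma_3 \Phi(t)}w(t, x+\mathbf{Y}(t))$, with $\Phi(t):=\int_0^t a_0(s)\,ds$ and $\mathbf{Y}(t):=\int_0^t \mathbf{a}(s)\,ds$. This conjugation eliminates the perturbative drift and phase terms exactly, at the cost of replacing $\mathcal{H}_{\omega_*}$ by $\mathcal{H}_{\omega_*}^{\mathbf{Y}(t)}$, where only the spatially localized (potential) part of $\mathcal{H}_{\omega_*}$ is translated by $\mathbf{Y}(t)$. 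Since $\mathcal{H}_{\omega_*}-\sigma_3(-\Delta+\omega_*)$ has exponentially decaying coefficients inherited from $\varphi_{\omega_*}$, the resulting perturbation is a smooth, spatially localized operator whose $\mathrm{Stz}\to L^2W^{1,6/5}$ norm is controlled by a quantity vanishing with $\|\mathbf{Y}\|_{L^\infty}$, and hence with $\delta$ on bounded time intervals. A Neumann series/contraction then closes for $\delta$ small enough, yielding both estimates.

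The main obstacle is precisely this derivative perturbation: a naive Hölder bound based on $a_l\in L^\infty_t$ gives no spatial improvement, and $\|\nabla_x w\|_{L^2 L^{6/5}}$ is not controlled by $\|w\|_{\mathrm{Stz}}$, so the dual Strichartz estimate cannot be applied directly to $a_l\partial_{x_l}w$. The Galilean-gauge conjugation is the essential trick that trades the unbounded derivative perturbation for a bounded, small-in-$\delta$, spatially localized one, at which point the argument becomes a standard perturbation of Beceanu's Strichartz bounds.
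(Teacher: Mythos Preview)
The paper's own proof is a one-line citation: ``See \cite{beceanu}.'' Beceanu's theorem is stated for precisely this kind of time-dependent perturbation (small $L^\infty_t$ phase and drift coefficients, equivalently a potential moving along a Lipschitz path), and gives the Strichartz bounds directly, uniformly on the whole half-line. So the intended argument is not a perturbation of the autonomous Strichartz estimate for $e^{-\im t\mathcal{H}_{\omega_*}}$; it is an application of a genuinely time-dependent result.

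Your reduction has a real gap. After the gauge--Galilean conjugation, the derivative term is indeed removed, but the potential is translated by $\mathbf{Y}(t)=\int_0^t \mathbf{a}(s)\,ds$, and the perturbation you must absorb is $V(\cdot+\mathbf{Y}(t))-V(\cdot)$. Its size is governed by $\|\mathbf{Y}\|_{L^\infty(0,T)}\le \delta\,T$, not by $\delta$ alone; with $a_l$ merely in $L^\infty_t$ there is no bound on $\mathbf{Y}$ uniform in $T$. Hence your contraction only closes with constants depending on $T$, which is exactly what the bootstrap in the paper cannot tolerate (Proposition~\ref{prop:boot} requires $T$-uniform bounds, ultimately $T=\infty$). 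In addition, $P_*$ does not commute with spatial translations, so after conjugation the projected structure is lost and must be re-established, another nontrivial step you have not addressed.

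The substantive point is that you have misidentified what \cite{beceanu} provides: it is not merely the autonomous Strichartz estimate for $e^{-\im t\mathcal{H}_{\omega_*}}P_*$ (which follows from wave-operator intertwining as you sketch), but Strichartz estimates for Schr\"odinger operators with a \emph{moving} localized potential along an arbitrary Lipschitz trajectory, with constants independent of the path length. That is precisely what handles the drift term $\im\,\mathbf{a}(t)\cdot\nabla_x$ here, and it is why the paper simply cites \cite{beceanu} rather than attempting a perturbative argument of the type you outline.
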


\begin{proof}
See, \cite{beceanu}.
\end{proof}

Using  Strichartz estimates and  Proposition \ref{prop:Stz}, we now estimate   $\|\eta\|_{\mathrm{Stz} }$.

\begin{lemma}\label{lem:etabound}
Under the assumption of Proposition \ref{prop:boot}, we have
\begin{align}\label{etabound}
\|\eta\|_{\mathrm{Stz} }\lesssim \epsilon + \sum_{\mathbf{m}\in \mathbf{R}_{\mathrm{min}}} \|\mathbf{z}^{\mathbf{m}}\|_{L^2}.
\end{align}
\end{lemma}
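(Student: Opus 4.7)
\bigskip

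\noindent\textbf{Proof proposal for Lemma \ref{lem:etabound}.}
The plan is to apply Beceanu's Strichartz estimate (Proposition \ref{prop:Stz}) directly to the equation \eqref{eq:tildeeta} for $\widetilde{\zeta}$, bound each forcing term on the right-hand side in $L^2 W^{1,6/5}$ (plus $H^1$ for the initial data), and then transfer the estimate back to $\eta$ via the equivalence \eqref{eq:eta_tildeeta_3}. The coefficients in the linear part $a_0 = \dot{\theta}$ and $a_l = \dot{y}_l$ are $L^\infty$-small by \eqref{eq:Thetadot}, so Proposition \ref{prop:Stz} is applicable once $\epsilon_0$ is small.

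The initial datum contributes $\|\widetilde{\zeta}(0)\|_{H^1}\lesssim \|\eta(0)\|_{H^1}\lesssim \epsilon$ using \eqref{eq:eta_tildeeta_2} and \eqref{est:Rminus1}. For the nonlinearity $P_*\widetilde{G}_{-\theta,-\mathbf{y},0}\sigma_3 \mathfrak{F}$ I would use Lemma \ref{lem:Fbound}, together with the boundedness of $P_*$ and the gauge/translation on Sobolev spaces, to obtain $\lesssim \epsilon \|\eta\|_{\mathrm{Stz}}$. For the refined-profile residual $P_*\sigma_3 \mathfrak{R}[0,\varpi,0,\mathbf{v},\mathbf{z}]$, Proposition \ref{prop:rp_pre_galilei} (specifically \eqref{expestR}--\eqref{estR}) gives a pointwise-in-time bound in $\Sigma$ (hence in $W^{1,6/5}$ by weighted embedding) of the form
\begin{equation*}
\|\mathcal{R}\|_{W^{1,6/5}}\lesssim \sum_{\mathbf{m}\in \mathbf{R}_{\mathrm{min}}} |\mathbf{z}^{\mathbf{m}}| + (|\varpi|+\|\mathbf{z}\|)\sum_{\mathbf{m}\in\mathbf{R}_{\mathrm{min}}}|\mathbf{z}^{\mathbf{m}}|,
\end{equation*}
which, after integrating in time and using \eqref{Linftybound}, yields the contribution $\sum_{\mathbf{m}\in\mathbf{R}_{\mathrm{min}}}\|\mathbf{z}^{\mathbf{m}}\|_{L^2}$ announced in the statement.

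The work lies in the remainder $R_{\widetilde{\zeta}}$. I would split it into three groups. First, the terms $\dot{\theta}(R-1)\widetilde{\zeta}$, $\dot{y}_l\partial_{x_l}((R-1)\widetilde{\zeta})$, $\dot{\varpi}(\partial_{\varpi}R)\widetilde{\zeta}$, $(D_{\mathbf{v}}R\,\dot{\mathbf{v}})\widetilde{\zeta}$, $(D_{\mathbf{z}}R\,\dot{\mathbf{z}})\widetilde{\zeta}$: using \eqref{est:Rminus1}, $\|\dot{\Theta}\|_{L^\infty}\lesssim \epsilon$ from \eqref{eq:Thetadot}, and the $L^\infty$ bound $\|(\varpi,\mathbf{v},\mathbf{z})\|_{L^\infty}\lesssim\epsilon$, these are of size $\epsilon\|\widetilde{\zeta}\|_{\mathrm{Stz}}\sim \epsilon\|\eta\|_{\mathrm{Stz}}$ in $L^2 W^{1,6/5}$. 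Second, the term $P_*D\phi[0,\varpi,0,\mathbf{v},\mathbf{z}](\dot{\Theta}-\widetilde{\Theta})$ is a finite linear combination of Schwartz functions multiplied by components of $\dot{\Theta}-\widetilde{\Theta}$; by \eqref{Theta:L2} it is controlled in $L^2 W^{1,6/5}$ by $\epsilon\|\eta\|_{\mathrm{Stz}}+\|\eta\|_{\mathrm{Stz}}^2$. Third, the term $P_*(\mathcal{H}[0,\varpi,0,\mathbf{v},\mathbf{z}]-\mathcal{H}_{\omega_*})R\widetilde{\zeta}$ is a smooth, exponentially localized potential times $\widetilde{\zeta}$, whose $L^\infty$-in-time operator norm on $W^{1,6/5}$ is $\lesssim |\varpi|+\|\mathbf{v}\|+\|\mathbf{z}\|\lesssim \epsilon$ by smoothness of $\varphi[\omega,\mathbf{z}]$ in its parameters and the chain rule; combined with the Strichartz norm of $\widetilde{\zeta}$ this yields another $\epsilon\|\eta\|_{\mathrm{Stz}}$ contribution.

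Summing these, Proposition \ref{prop:Stz} applied to \eqref{eq:tildeeta} gives
\begin{equation*}
\|\widetilde{\zeta}\|_{\mathrm{Stz}}\lesssim \epsilon + \sum_{\mathbf{m}\in \mathbf{R}_{\mathrm{min}}}\|\mathbf{z}^{\mathbf{m}}\|_{L^2} + \epsilon \|\eta\|_{\mathrm{Stz}} + \|\eta\|_{\mathrm{Stz}}^2,
\end{equation*}
and the bootstrap assumption \eqref{eq:boot} together with smallness of $\epsilon_0$ lets me absorb $\epsilon\|\eta\|_{\mathrm{Stz}}+\|\eta\|_{\mathrm{Stz}}^2$ into the left-hand side via \eqref{eq:eta_tildeeta_3}. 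The main technical obstacle I anticipate is the bookkeeping in the third group: one must verify that $\mathcal{H}[0,\varpi,0,\mathbf{v},\mathbf{z}]-\mathcal{H}_{\omega_*}$ is, as a perturbation, indeed of size $O(\epsilon)$ uniformly in time as a bounded map between the function spaces used for the inhomogeneous Strichartz endpoint, which relies crucially on the $C^\infty(\mathcal{O},\Sigma)$ regularity of $\varphi[\omega,\mathbf{z}]$ obtained in Proposition \ref{prop:rp_pre_galilei} and the smoothness of the Galilean conjugation.
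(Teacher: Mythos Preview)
Your proposal is correct and follows essentially the same approach as the paper: apply Proposition~\ref{prop:Stz} to \eqref{eq:tildeeta}, use \eqref{eq:Thetadot} for smallness of the coefficients, bound $\mathfrak{F}$ via Lemma~\ref{lem:Fbound}, bound $\mathfrak{R}$ via \eqref{expestR}--\eqref{estR}, and bound $R_{\widetilde{\zeta}}$ via Lemma~\ref{lem:R} together with \eqref{Theta:L2} and \eqref{eq:Thetadot}, then transfer back to $\eta$ through \eqref{eq:eta_tildeeta_3}. The paper's proof is much terser---it does not spell out the three-way splitting of $R_{\widetilde{\zeta}}$ or the absorption argument---but the underlying ingredients are identical to yours.
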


\begin{proof}
From \eqref{eq:Thetadot}, \eqref{eq:eta_tildeeta_3}, \eqref{eq:tildeeta} and  Proposition \ref{prop:Stz} we have
\begin{align*}
\|\eta\|_{\mathrm{Stz} }\lesssim \epsilon + \|\mathfrak{F}\|_{L^2W^{1,6/5}}+\|\mathfrak{R}[0,\varpi,0,\mathbf{v},\mathbf{z}]\|_{L^2W^{1,6/5}}+\|R_{\widetilde{\eta}}\|_{L^2W^{1,6/5}}.
\end{align*}
Since $\|\mathfrak{F}\|_{L^{2}W^{1,6/5}}\sim \|F\|_{L^2 W^{1,6/5}}$, by Lemma \ref{lem:Fbound}, we have
\begin{align*}
\|\mathfrak{F}\|_{L^2 W^{1,6/5}}\lesssim \epsilon \|\eta\|_{\mathrm{Stz} }.
\end{align*}
Using \eqref{Theta:L2}, \eqref{eq:Thetadot} and Lemma \ref{lem:R} we have
\begin{align*}
\|R_{\widetilde{\eta}}\|_{L^2 W^{1,6/5}}\lesssim \epsilon \|\eta\|_{\mathrm{Stz^1}}.
\end{align*}
Thus, we obtain \eqref{etabound}.
\end{proof}

In the rest of the paper we focus on the bound on $\sum_{\mathbf{m}\in \mathbf{R}_{\mathrm{min}}} \|\mathbf{z}^{\mathbf{m}}\|_{L^2}$. To this effect, for $\mathfrak{G}_{\mathbf{m}}$ is given in \eqref{FermiG},  we need  the expansion
\begin{align}\label{def:g} &
g=\widetilde{\zeta}-Z \text{ with }Z =\sum_{\mathbf{m} \in \mathbf{R}_{\mathrm{min}}}\mathbf{z}^{\mathbf{m}}Z_{\mathbf{m}} \text{  with}\\& Z_{\mathbf{m}}
=
-(\mathcal{H}_{\omega_*}-\lambda(\omega_*,\mathbf{m})-\im 0)^{-1}\sigma_3\mathfrak{G}_{\mathbf{m}}. \nonumber
\end{align}
Then,  for $\mathfrak{R}_1=\mathfrak{R}[0,\varpi,0,\mathbf{v},\mathbf{z}]-\mathfrak{G}_{\mathbf{m}}$, the vector valued function  $g$ solves
\begin{align}\label{eq:g}
\im \partial_t g = &\mathcal{H}_{\omega_*}  g+ P_*\( \sigma_3\dot{\theta} g+\im \sum_{l=1}^{3}\dot{y_l}\partial_{x_l}g\) + P_*\(\sigma_3 \dot{\theta} Z+\im \sum_{l=1}^{3}\dot{y_l}\partial_{x_l}Z\)\\&
+P_* \widetilde{G}_{-\theta,-\mathbf{y},0}\sigma_3\mathfrak{F}
 + P_*\sigma_3 \mathfrak{R}_1  + R_{\widetilde{\zeta}}
-\sum_{\mathbf{m}\in \mathbf{R}_{\mathrm{min}}}\(\im \partial_t(\mathbf{z}^{\mathbf{m}})-\lambda(\omega_*,\mathbf{m})\mathbf{z}^{\mathbf{m}}\)Z_{\mathbf{m}}.\nonumber
\end{align}
Before going into the estimates of $g$, we    consider several  preparatory  lemmas.
\begin{lemma}\label{lem:zmest}
Let $\mathbf{m}\in \mathbf{R}_{\mathrm{min}}$.
We have
\begin{align*}
\|\im \partial_t(\mathbf{z}^{\mathbf{m}})-\lambda(\omega_*,\mathbf{m})\mathbf{z}^{\mathbf{m}}\|_{L^2}\lesssim \epsilon (C_0\epsilon).
\end{align*}
\end{lemma}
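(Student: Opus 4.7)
My plan is to differentiate the monomial by Leibniz, substitute the modulation equations for $\dot z_j$, and estimate in $L^2_t$ the resulting errors using \eqref{Theta:L2}, the bootstrap \eqref{eq:boot} and \eqref{Linftybound}. Writing
\begin{align*}
\im\partial_t(\mathbf{z}^{\mathbf{m}})=\sum_j m_{+,j}\,(\im\dot z_j)\,\mathbf{z}^{\mathbf{m}-\mathbf{e}^{j+}}+\sum_j m_{-,j}\,(\im\dot{\bar z}_j)\,\mathbf{z}^{\mathbf{m}-\mathbf{e}^{j-}},
\end{align*}
and inserting $\dot z_j=\widetilde z_j+(\dot z_j-\widetilde z_j)$ together with the expansion \eqref{eq:zanz} of $\widetilde z_j$, the leading contributions $\lambda_j(\omega_*)z_j$ and $-\lambda_j(\omega_*)\bar z_j$ collect into the principal term $\lambda(\omega_*,\mathbf{m})\mathbf{z}^{\mathbf{m}}$. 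It then remains to bound four families of error terms.

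Three of these are immediate. The modulation defect $(\dot z_j-\widetilde z_j)\mathbf{z}^{\mathbf{m}-\mathbf{e}^{j+}}$ has $L^2$ norm $\leq \|\dot\Theta-\widetilde\Theta\|_{L^2}\|\mathbf{z}\|_{L^\infty}^{\|\mathbf{m}\|-1}\lesssim \epsilon(C_0\epsilon)$ by \eqref{Theta:L2}. The $\varpi$--correction $(\lambda_j(\omega)-\lambda_j(\omega_*))\mathbf{z}^{\mathbf{m}}$ is bounded by $\|\varpi\|_{L^\infty}\|\mathbf{z}^{\mathbf{m}}\|_{L^2}\lesssim \epsilon(C_0\epsilon)$ directly from the bootstrap, since $\mathbf{m}\in\mathbf{R}_{\min}$. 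The residual $\im\widetilde z_{j\mathcal R}\,\mathbf{z}^{\mathbf{m}-\mathbf{e}^{j+}}$, using $|\widetilde z_{j\mathcal R}|\lesssim \sum_{\mathbf{m}''\in\mathbf{R}_{\min}}|\mathbf{z}^{\mathbf{m}''}|$ from \eqref{eq:Ranz}, is controlled by $\|\mathbf{z}\|_{L^\infty}^{\|\mathbf{m}\|-1}\sum_{\mathbf{m}''}\|\mathbf{z}^{\mathbf{m}''}\|_{L^2}\lesssim \epsilon(C_0\epsilon)$; here I use that every $\mathbf{m}\in\mathbf{R}_{\min}$ has $\|\mathbf{m}\|\geq 2$, since $\mathbf{e}^{j\pm}\notin\mathbf{R}$ as $\lambda_j(\omega_*)<\omega_*$.

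The delicate family is the $\Lambda_j$--sum $\widetilde\lambda_{j\mathbf{m}'}\mathbf{z}^{\mathbf{n}}$ with $\mathbf{n}:=\mathbf{m}+\mathbf{m}'-\mathbf{e}^{j+}$, $\mathbf{m}'\in\Lambda_j$, $\|\mathbf{m}'\|\geq 2$. Since $\lambda(\omega_*,\mathbf{n})=\lambda(\omega_*,\mathbf{m})$, one has $\mathbf{n}\in\mathbf{R}$ with $\|\mathbf{n}\|\geq\|\mathbf{m}\|+1$, and my strategy is to show $\mathbf{n}\in\mathbf{I}$: then any $\mathbf{n}^{\star}\in\mathbf{R}_{\min}$ with $\mathbf{n}^{\star}\prec\mathbf{n}$ yields $|\mathbf{z}^{\mathbf{n}}|\leq \|\mathbf{z}\|_{L^\infty}^{\|\mathbf{n}\|-\|\mathbf{n}^{\star}\|}|\mathbf{z}^{\mathbf{n}^{\star}}|\leq \epsilon|\mathbf{z}^{\mathbf{n}^{\star}}|$, so $\|\mathbf{z}^{\mathbf{n}}\|_{L^2}\lesssim \epsilon(C_0\epsilon)$ via bootstrap. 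When $m'_{+,j}+m'_{-,j}\geq 1$ this is clear since $\mathbf{m}\prec\mathbf{n}$ directly. The main obstacle is the sub-case $m'_{+,j}=m'_{-,j}=0$: assumption (H6) forces both $\mathbf{m}'_+\neq 0$ (else $\lambda(\omega_*,\mathbf{m}')\leq 0\neq\lambda_j$) and $\mathbf{m}'_-\neq 0$ (else the first clause of (H6) forbids $\lambda(\omega_*,(\mathbf{m}'_+,0))=\lambda_j$ at $\|\mathbf{m}'_+\|\geq 2$); then depending on the sign of $\lambda(\omega_*,\mathbf{m})$, I remove $\mathbf{e}^{l-}$ for some $l\neq j$ with $m'_{-,l}\geq 1$ (in the positive case), or $\mathbf{e}^{k+}$ for some $k\neq j$ with $m'_{+,k}\geq 1$ (in the negative case), and verify that the resulting $\tilde{\mathbf{n}}$ still lies in $\mathbf{R}$, producing a strictly smaller resonant element and placing $\mathbf{n}\in\mathbf{I}$. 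Summing the four bounds completes the proof.
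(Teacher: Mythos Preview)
Your proof is correct and follows essentially the same route as the paper: Leibniz expansion, split $\dot z_j$ via the modulation equation \eqref{eq:zanz}, bound the defect by \eqref{Theta:L2}, the $\widetilde z_{j\mathcal R}$-piece by \eqref{eq:Ranz}, and handle the $\Lambda_j$-terms by a combinatorial argument showing the resulting exponent lies in $\mathbf{I}$. Your combinatorics in the delicate sub-case is organized a bit differently (the paper first observes that $\mathbf{m}\in\mathbf{R}_{\min}$ with $m_{j+}\neq 0$ forces $\mathbf{m}_-=0$ and then drops all of $\mathbf{m}'_-$ at once, rather than removing a single $\mathbf{e}^{l-}$), and you are slightly more careful than the paper in isolating the $(\lambda_j(\omega)-\lambda_j(\omega_*))\mathbf{z}^{\mathbf{m}}$ contribution, but these are cosmetic differences.
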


\begin{proof}
Setting $\boldsymbol{\lambda}\mathbf{z}:=(\lambda_1(\omega_*)z_1,\cdots,\lambda_N(\omega_*)z_N)$, we have
\begin{align}\label{estgpre1}
\partial_t(\mathbf{z}^{\mathbf{m}})+\im \lambda(\omega_*,\mathbf{m})\mathbf{z}^{\mathbf{m}}=D_{\mathbf{z}}(\mathbf{z}^{\mathbf{m}})\(\dot{\mathbf{z}}-\widetilde{\mathbf{z}}\)+D_{\mathbf{z}}(\mathbf{z}^{\mathbf{m}})\(\widetilde{\mathbf{z}}+\im \boldsymbol{\lambda}\mathbf{z}\).
\end{align}
Since $\mathbf{m}\in \mathbf{R}_{\mathrm{min}}$ implies $\|\mathbf{m}\|\geq 2$, from \eqref{Theta:L2}, we have
\begin{align}\label{estgpre2}
\|D_{\mathbf{z}}(\mathbf{z}^{\mathbf{m}})\(\dot{\mathbf{z}}-\widetilde{\mathbf{z}}\)\|_{L^2}\lesssim \epsilon \(\epsilon\|\eta\|_{\mathrm{Stz} }+\|\eta\|_{\mathrm{Stz} }^2\).
\end{align}
Next, from \eqref{eq:zanz}, we have
\begin{align}\label{estgpre3}
D_{\mathbf{z}}(\mathbf{z}^{\mathbf{m}})\(\widetilde{\mathbf{z}}+\im \boldsymbol{\lambda}\mathbf{z}\)=&\sum_{j=1}^{N} m_{j+}\mathbf{z}^{\mathbf{m}-\mathbf{e}^{j+}}\(\im \sum_{\mathbf{n}\in \Lambda_j,\ \|\mathbf{n}\|\geq 2}\mathbf{z}^{\mathbf{n}}\widetilde{\lambda}_{j\mathbf{m}}(\omega)+\widetilde{z}_{j\mathcal{R}}(\omega,\mathbf{z})\)\\&+\sum_{j=1}^{N}m_{j-}\mathbf{z}^{\mathbf{m}-\mathbf{e}^{j-}}\overline{\(\im \sum_{\mathbf{n}\in \Lambda_j,\ \|\mathbf{n}\|\geq 2}\mathbf{z}^{\mathbf{n}}\widetilde{\lambda}_{j\mathbf{n}}(\omega)+\widetilde{z}_{j\mathcal{R}}(\omega,\mathbf{z})\)}.\nonumber
\end{align}
By \eqref{eq:Ranz}, we have
\begin{align}\label{estgpre4}
\sum_{j=1}^{N}\|  m_{j+}\mathbf{z}^{\mathbf{m}-\mathbf{e}^{j+}}\widetilde{z}_{j\mathcal{R}}(\omega,\mathbf{z})+m_{j-}\mathbf{z}^{\mathbf{m}-\mathbf{e}^{j-}}\overline{\widetilde{z}_{j\mathcal{R}}(\omega,\mathbf{z})}\|_{L^2}\lesssim \epsilon \sum_{\mathbf{m} \in \mathbf{R}_{\mathrm{min}}}\|\mathbf{z}^{\mathbf{m}}\|_{L^2}
\end{align}
We are left to estimate terms in the form $m_{j+}\mathbf{z}^{\mathbf{m}-\mathbf{e}^{j+}+\mathbf{n}}$ or $m_{j-}\mathbf{z}^{\mathbf{m}-\mathbf{e}^{j-}+\overline{\mathbf{n}}}$ with $\mathbf{n}\in \Lambda_j$ and $\|\mathbf{n}\|\geq 2$.
We will only handle the former and the latter can be estimated by similar argument.
Since $\mathbf{m}\in \mathbf{R}_{\mathrm{min}}$, if $m_{j+}\neq 0$, we have $\lambda(\omega_*,\mathbf{m})>\omega_*$ and $\mathbf{m}_-=0$.
From assumption (H6), we have $\mathbf{n}_-\geq  1$.
Since
\begin{align*}
\omega_*<\lambda(\omega_*,\mathbf{m})=\sum_{k=1}^{N}\lambda_k(m_{k+}+\delta_{jk}+n_{k+}-n_{k-})<\sum_{k=1}^{N}\lambda_k(m_{k+}+\delta_{jk}+n_{k+}),
\end{align*}
we have $\mathbf{m}'  \preceq \mathbf{m}-\mathbf{e}^{j+}+(\mathbf{n}_+,0)$ for some $\mathbf{m}'\in \mathbf{R}_{\mathrm{min}}$.
Thus, $\mathbf{m}-\mathbf{e}^{j+}+\mathbf{n}\in \mathbf{I}$.
Therefore,
\begin{align}\label{estgpre5}
\sum_{j=1}^{N} \|m_{j+}\mathbf{z}^{\mathbf{m}-\mathbf{e}^{j+}} \sum_{\mathbf{n}\in \Lambda_j,\ \|\mathbf{n}\|\geq 2}\mathbf{z}^{\mathbf{n}}\widetilde{\lambda}_{j\mathbf{m}}(\omega)+m_{j-}\mathbf{z}^{\mathbf{m}-\mathbf{e}^{j-}}\overline{\sum_{\mathbf{n}\in \Lambda_j,\ \|\mathbf{n}\|\geq 2}\mathbf{z}^{\mathbf{n}}\widetilde{\lambda}_{j\mathbf{n}}(\omega)}\|_{L^2}\lesssim \epsilon \sum_{\mathbf{m} \in \mathbf{R}_{\mathrm{min}}}\|\mathbf{z}^{\mathbf{m}}\|_{L^2}.
\end{align}
Form \eqref{estgpre1}, \eqref{estgpre2}, \eqref{estgpre3}, \eqref{estgpre4} and \eqref{estgpre5}, we have the conclusion.
\end{proof}

\begin{lemma}\label{prop:outgoing}
Let $\sigma> 0$ sufficiently large and $|\lambda|>\omega_*$.
Then, there exists $\delta>0$ s.t.\ if $\|a_j(t)\|_{L^\infty}<\delta$,
\begin{align*}
\|\mathcal{U}(t,0)(\mathcal{H}_{\omega_*}-\lambda - \im 0)^{-1}G\|_{L^{2,-\sigma}}&\lesssim  \<t\>^{-3/2}\|G\|_{L^{2,\sigma}},\ t\geq 0,\\
\|\int_0^\cdot \mathcal{U}(t,s)(\mathcal{H}_{\omega_*}-\lambda - \im 0)^{-1}\mathcal{F}\,ds\|_{L^2L^{2,-\sigma}}&\lesssim \|\mathcal{F}\|_{L^2L^{2,\sigma}}.
\end{align*}
\end{lemma}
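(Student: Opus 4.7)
The strategy is a two-step approach: first establish both estimates for the free propagator $e^{-\im t \mathcal{H}_{\omega_*}} P_*$, then transfer them to the perturbed evolution $\mathcal{U}(t,s)$ via a Duhamel expansion, using Proposition \ref{prop:Stz} together with the smallness of $\|a_j\|_{L^\infty}$.

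\textbf{Unperturbed step.} Using the wave operator intertwining $\mathcal{H}_{\omega_*}P_* = W \sigma_3(-\Delta+\omega_*) W^{-1}$ and the boundedness of $W^{\pm 1}$ on the weighted $L^{2,\pm\sigma}$ spaces (see \cite{Cu1,CPV}), the question reduces to an analogous statement for the free $3$D Schr\"odinger resolvent $(-\Delta+\omega_*-\lambda-\im 0)^{-1}$ and the free evolution $e^{-\im t \sigma_3(-\Delta+\omega_*)}$. Inserting the spectral representation and using the jump formula for the resolvent across the continuous spectrum $[\omega_*,\infty)$, the composite $e^{-\im t\mathcal{H}_{\omega_*}}P_*(\mathcal{H}_{\omega_*}-\lambda-\im 0)^{-1}$ becomes, after a standard manipulation using the limiting absorption principle (valid under (H3)--(H4)), an oscillatory integral concentrated near the sphere $\{|\xi|^2=\lambda-\omega_*\}$. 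A classical stationary-phase argument in $\R^3$ then produces pointwise local decay $\langle t\rangle^{-3/2}$ between $L^{2,\sigma}$ and $L^{2,-\sigma}$ for $\sigma$ sufficiently large. The $L^2_t$-smoothing analogue (second inequality) follows by a Schur test/Christ--Kiselev argument applied to the kernel $\min(1,\langle t-s\rangle^{-3/2})$, which is $L^1_t$ uniformly in $s$.

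\textbf{Perturbation step.} Set $V(t):=\sigma_3 a_0(t)+\im\sum_l a_l(t)\partial_{x_l}$. Duhamel gives
\begin{align*}
\mathcal{U}(t,0)=e^{-\im t\mathcal{H}_{\omega_*}}P_* - \im\int_0^t e^{-\im(t-r)\mathcal{H}_{\omega_*}}P_* V(r)\mathcal{U}(r,0)\,dr.
\end{align*}
Applying both sides to $(\mathcal{H}_{\omega_*}-\lambda-\im 0)^{-1}G$, the leading term is controlled by the unperturbed estimate. For the correction, use the $L^2_tL^{2,\sigma}\to L^2_tL^{2,-\sigma}$ smoothing of Step~1 applied to the source $P_*V(r)\mathcal{U}(r,0)(\mathcal{H}_{\omega_*}-\lambda-\im 0)^{-1}G$, whose $L^2_tL^{2,\sigma}$ norm is bounded by $\|V\|_{L^\infty}$ times the Strichartz norm of $\mathcal{U}(r,0)(\mathcal{H}_{\omega_*}-\lambda-\im 0)^{-1}G$, which in turn is controlled by Proposition \ref{prop:Stz}. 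Iterating produces a Neumann series in $\delta$ whose total norm is bounded by the unperturbed estimate times a geometric factor $(1-O(\delta))^{-1}$. The inhomogeneous version proceeds identically, with $\mathcal{F}$ playing the role of the source.

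\textbf{Main obstacle.} The delicate point is reconciling the weighted $L^2$ nature of the target bounds with the unweighted Strichartz framework in which $\mathcal{U}$ is naturally bounded, especially since $V(t)$ contains first-order derivatives. One must exploit the two-derivative gain of the limiting absorption principle $(\mathcal{H}_{\omega_*}-\lambda-\im 0)^{-1}\colon L^{2,\sigma}\to H^{2,-\sigma}$ to absorb the $\partial_{x_l}$ in $V$, while $\sigma$ must be chosen sufficiently large to dominate the weight loss from stationary phase as well as the weight shift required in passing from $L^{2,-\sigma}$ to $L^{2,\sigma}$ at each Duhamel iteration. The smallness $\|a_j\|_\infty<\delta$ is essential both for the convergence of the Neumann series and for the applicability of Proposition \ref{prop:Stz}.
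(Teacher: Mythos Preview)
Your unperturbed step is reasonable, but the perturbation step has a genuine gap. The coefficients $a_j(t)$ are scalars depending only on $t$, so $V(t)=\sigma_3 a_0(t)+\im\sum_l a_l(t)\partial_{x_l}$ carries no spatial localization whatsoever: there is no way to bound $\|P_*V(r)u\|_{L^{2,\sigma}}$ by an unweighted Strichartz norm of $u$. In addition, $(\mathcal{H}_{\omega_*}-\lambda-\im 0)^{-1}G$ belongs generically only to $L^{2,-\sigma}$ by the limiting absorption principle, not to $H^1$ or even $L^2$, so Proposition~\ref{prop:Stz} cannot be applied to $\mathcal{U}(r,0)(\mathcal{H}_{\omega_*}-\lambda-\im 0)^{-1}G$ as you propose. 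Both problems prevent your Neumann iteration from closing, and you have not addressed at all how the pointwise $\langle t\rangle^{-3/2}$ bound survives the perturbation.

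The paper itself gives no argument and defers to Lemmas~10.1--10.2 of \cite{CM14}. The structural point your scheme is missing is that the only source of spatial localization in the Duhamel iteration is the discrete projection $P_*^{\perp}=1-P_*$: in the decomposition $P_*VP_*=V-P_*^{\perp}V-VP_*^{\perp}+P_*^{\perp}VP_*^{\perp}$, every term containing $P_*^{\perp}$ is finite rank with rapidly decaying kernel and hence maps $L^{2,-\sigma}\to L^{2,\sigma}$, while the remaining pure constant-coefficient piece $V$ must be handled separately (e.g.\ by conjugation with a gauge rotation and translation, noting that any commutator with the potential part of $\mathcal{H}_{\omega_*}$ is again spatially localized). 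It is this weight gain from $P_*^{\perp}$, not any property of $V$ itself, that allows the argument to close.
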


\begin{proof}
See, e.g.\ Lemma 10.1 and 10.2 of \cite{CM14}.
\end{proof}

We can now estimate $g$.
\begin{lemma}\label{lem:g}
We have
\begin{align*}
\|g\|_{L^2L^{2,-\sigma}} \lesssim \epsilon + \epsilon(C_0\epsilon).
\end{align*}
\end{lemma}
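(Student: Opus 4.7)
The plan is to apply Duhamel's formula to equation \eqref{eq:g} using the propagator $\mathcal{U}(t,s)$, which by construction absorbs $\mathcal{H}_{\omega_*}g$ together with the terms $P_*(\sigma_3\dot\theta g+\im\sum_l\dot y_l\partial_{x_l} g)$, and then to control every remaining source term by one of two complementary tools: the Strichartz estimate of Proposition \ref{prop:Stz}, followed by the embedding $\|f\|_{L^{2,-\sigma}}\lesssim\|f\|_{L^6}$ (valid because $\langle x\rangle^{-\sigma}\in L^3(\R^3)$ for $\sigma>1$); or the outgoing resolvent estimates of Lemma \ref{prop:outgoing}, invoked whenever the source factors through $(\mathcal{H}_{\omega_*}-\lambda(\omega_*,\mathbf{m})-\im 0)^{-1}$ acting on a profile in $L^{2,\sigma}$.

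For the homogeneous part $\mathcal{U}(t,0)g(0)$ I will split $g(0)=\widetilde\zeta(0)-Z(0)$: the Strichartz bound for $\widetilde\zeta(0)$ together with the weighted embedding yields $\lesssim\|\widetilde\zeta(0)\|_{H^1}\lesssim\epsilon$, while since $Z(0)=-\sum_{\mathbf{m}}\mathbf{z}^{\mathbf{m}}(0)(\mathcal{H}_{\omega_*}-\lambda(\omega_*,\mathbf{m})-\im 0)^{-1}\sigma_3\mathfrak{G}_{\mathbf{m}}$ with $|\mathbf{z}^{\mathbf{m}}(0)|\lesssim\epsilon^{\|\mathbf{m}\|}\leq \epsilon^2$, the first estimate of Lemma \ref{prop:outgoing} provides the pointwise decay $\langle t\rangle^{-3/2}\|\sigma_3\mathfrak{G}_{\mathbf{m}}\|_{L^{2,\sigma}}$, whose $L^2_t$-norm is finite, contributing $\lesssim\epsilon^2\leq\epsilon$.

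For the inhomogeneous terms I will distinguish Strichartz-amenable and resolvent-amenable pieces. The first category contains $P_*\widetilde G_{-\theta,-\mathbf{y},0}\sigma_3\mathfrak{F}$ (bounded via Lemma \ref{lem:Fbound} by $\|\mathfrak{F}\|_{L^2 W^{1,6/5}}\lesssim\epsilon\|\eta\|_{\mathrm{Stz}}\lesssim\epsilon(C_0\epsilon)$), $P_*\sigma_3\mathfrak{R}_1$ (using \eqref{estR} together with the continuous embedding $\Sigma\hookrightarrow W^{1,6/5}$ furnished by H\"older against $\langle x\rangle^{-\sigma}\in L^3$), and $R_{\widetilde\zeta}$ (treated as in Lemma \ref{lem:etabound}, via \eqref{Theta:L2}, \eqref{eq:Thetadot} and Lemma \ref{lem:R}), each yielding $\epsilon(C_0\epsilon)$. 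The resolvent-amenable piece $-\sum_{\mathbf{m}}(\im\partial_t(\mathbf{z}^{\mathbf{m}})-\lambda(\omega_*,\mathbf{m})\mathbf{z}^{\mathbf{m}})Z_{\mathbf{m}}$ I rewrite using $Z_{\mathbf{m}}=-(\mathcal{H}_{\omega_*}-\lambda(\omega_*,\mathbf{m})-\im 0)^{-1}\sigma_3\mathfrak{G}_{\mathbf{m}}$, so that Lemma \ref{lem:zmest} combined with the second estimate of Lemma \ref{prop:outgoing} produces the same bound.

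The principal obstacle is the term $P_*(\sigma_3\dot\theta Z+\im\sum_l\dot y_l\partial_{x_l} Z)$: its spatial factors $\sigma_3 Z_{\mathbf{m}}$ and $\partial_{x_l}Z_{\mathbf{m}}$ decay only like $r^{-1}$, so they lie in neither $L^{2,\sigma}$ nor $L^{6/5}$, and a direct Strichartz bound fails. My plan is to recast each as a resolvent: from $\sigma_3\mathcal{H}_{\omega_*}\sigma_3=\mathcal{H}_{\omega_*}^*$ one has $\sigma_3 Z_{\mathbf{m}}=-(\mathcal{H}_{\omega_*}^*-\lambda(\omega_*,\mathbf{m})-\im 0)^{-1}\mathfrak{G}_{\mathbf{m}}$, and from the commutator identity $[\mathcal{H}_{\omega_*},\partial_{x_l}]=$ (multiplication by derivatives of the Schwartz matrix potential in $\mathcal{H}_{\omega_*}-(-\Delta+\omega_*)\sigma_3$) one obtains $\partial_{x_l}Z_{\mathbf{m}}=(\mathcal{H}_{\omega_*}-\lambda(\omega_*,\mathbf{m})-\im 0)^{-1}\bigl(-\partial_{x_l}\sigma_3\mathfrak{G}_{\mathbf{m}}+[\mathcal{H}_{\omega_*},\partial_{x_l}]Z_{\mathbf{m}}\bigr)$, the bracket now lying in $L^{2,\sigma}$. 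The straightforward adjoint/incoming-resolvent analogue of Lemma \ref{prop:outgoing}, which follows by the same argument of \cite{beceanu}, combined with $\|\dot\theta\|_{L^\infty}+\|\dot{\mathbf{y}}\|_{L^\infty}\lesssim\epsilon$ from \eqref{eq:Thetadot} and $\sum_{\mathbf{m}}\|\mathbf{z}^{\mathbf{m}}\|_{L^2}\lesssim C_0\epsilon$, then gives $\lesssim\epsilon(C_0\epsilon)$. Summing all contributions yields $\|g\|_{L^2 L^{2,-\sigma}}\lesssim\epsilon+\epsilon(C_0\epsilon)$.
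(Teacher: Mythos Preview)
Your plan is correct and follows essentially the same route as the paper: Duhamel for \eqref{eq:g}, Strichartz (via Proposition~\ref{prop:Stz} and the embedding into $L^2L^{2,-\sigma}$) for $\widetilde\zeta(0)$, $\mathfrak{F}$, $\mathfrak{R}_1$, $R_{\widetilde\zeta}$, and the outgoing-resolvent estimates of Lemma~\ref{prop:outgoing} for every term carrying a factor $Z_{\mathbf m}$. The paper simply cites Lemma~\ref{prop:outgoing} for the $P_*(\sigma_3\dot\theta Z+\im\sum_l\dot y_l\partial_{x_l}Z)$ contribution without spelling out the commutator step you describe; your reduction is exactly what is implicitly needed there, and your ``adjoint analogue'' can in fact be avoided by writing $(\mathcal H_{\omega_*}^*-\lambda-\im 0)^{-1}=(\mathcal H_{\omega_*}-\lambda-\im 0)^{-1}+(\mathcal H_{\omega_*}-\lambda-\im 0)^{-1}(\mathcal H_{\omega_*}-\mathcal H_{\omega_*}^*)(\mathcal H_{\omega_*}^*-\lambda-\im 0)^{-1}$ and using that $\mathcal H_{\omega_*}-\mathcal H_{\omega_*}^*$ is multiplication by a Schwartz matrix, which feeds directly back into Lemma~\ref{prop:outgoing}.
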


\begin{proof}
From \eqref{eq:g}, we have
\begin{align}
\|g\|_{L^2L^{2,-\sigma}}\lesssim& \|\mathcal{U}(t,0)\widetilde{\zeta}(0)\|_{\mathrm{Stz^1}}+\sum_{\mathbf{m} \in \mathbf{R}_{\mathrm{min}}}|\mathbf{z}(0)^{\mathbf{m}}|\|\mathcal{U}(t,0)Z_{\mathbf{m}}\|_{L^2L^{2,-\sigma}}+
\|\mathfrak{F}\|_{\mathrm{Stz} }+\|\mathfrak{R}_1\|_{\mathrm{Stz} }+\|R_{\widetilde{\zeta}}\|_{\mathrm{Stz} }\nonumber\\&+\sum_{\mathbf{m} \in \mathbf{R}_{\mathrm{min}}} \|\int_0^\cdot \mathcal{U}(\cdot,s)\mathbf{z}(s)^{\mathbf{m}}P_*\(\sigma_3 \dot{\theta} Z_{\mathbf{m}}+\im \sum_{l=1}^{3}\dot{y_l}\partial_{x_l}Z_{\mathbf{m}}\)\|_{L^2L^{2,-\sigma}}\nonumber\\&
+\sum_{\mathbf{m} \in \mathbf{R}_{\mathrm{min}}}\|\int_0^\cdot \mathcal{U}(t,s)\(\im \partial_t(\mathbf{z}^{\mathbf{m}}(s))-\lambda(\omega_*,\mathbf{m})\mathbf{z}^{\mathbf{m}}(s)\)Z_{\mathbf{m}}\|_{L^2L^{2,-\sigma}},\label{gDuha}
\end{align}
where we have used $\mathrm{Stz} \hookrightarrow L^2L^{2,-\sigma}$.
The terms with $\mathrm{Stz} $ norms can be estimated similarly as the proof of Lemma \ref{lem:etabound} with the bound $\epsilon(C_0\epsilon)$.
The remaining terms can be bounded using Lemmas \ref{lem:zmest},  \ref{prop:outgoing}.
Thus, we have the conclusion.
\end{proof}

\subsection{Fermi Golden Rule estimate}

We are finally ready to estimate $\sum_{\mathbf{m} \in \mathbf{R}_{\mathrm{min}}}\|\mathbf{z}^{\mathbf{m}}\|_{L^2}$.

\begin{lemma}\label{lem:FGRbound}
We have
\begin{align*}
\sum_{\mathbf{m} \in \mathbf{R}_{\mathrm{min}}}\|\mathbf{z}\|_{L^2} \leq \frac{1}{2}C_0\epsilon.
\end{align*}
\end{lemma}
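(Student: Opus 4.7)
The plan is a Fermi Golden Rule (FGR) argument that exploits the dissipative imaginary part hidden in the $-\im 0$ prescription in the definition of $Z_{\mathbf{m}}$ from \eqref{def:g}, combined with the positivity guaranteed by (H7). First I would group the resonant multi-indices by eigenvalue: for each $k=1,\dots,M$, collect the amplitudes into a vector $\mathbf{w}_k(t)\in\C^{M_k}$ with entries $(\mathbf{w}_k)_n = \mathbf{z}^{\mathbf{m}(k,n)}(t)$, using that $\mathbf{R}_{\mathrm{min}}$ is the disjoint union of the $\mathbf{R}_{\mathrm{min},k}$ from \eqref{def:Rmink}. It then suffices to bound $\|\mathbf{w}_k\|_{L^2(0,T)}$ for each $k$ separately and sum.

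The second step is to refine Lemma \ref{lem:zmest} by inserting the decomposition $\widetilde{\zeta} = Z + g$ into the quadratic contribution of $\mathfrak{F}$ and extracting the resonant bilinear-in-$\mathbf{z}$ part. Schematically, one obtains
\begin{align*}
\im \dot{\mathbf{w}}_k = r_k \mathbf{w}_k + \mathsf{A}_k \mathbf{w}_k + \mathcal{E}_k, \qquad (\mathsf{A}_k)_{n,n'} = -\bigl\langle \sigma_3 \mathfrak{G}_{\mathbf{m}(k,n)},\, (\mathcal{H}_{\omega_*} - r_k - \im 0)^{-1}\sigma_3 \mathfrak{G}_{\mathbf{m}(k,n')}\bigr\rangle,
\end{align*}
where the remainder $\mathcal{E}_k$ collects (a) terms linear in $g$, bounded by $\|g\|_{L^2 L^{2,-\sigma}}$ via Lemma \ref{lem:g} and Proposition \ref{prop:outgoing}; (b) terms cubic or higher in $\mathbf{z}$, absorbed by smallness; (c) cross-resonance terms involving $\mathbf{w}_{k'}$ with $k'\neq k$, controlled either by integration by parts exploiting the frequency gap $r_k\neq r_{k'}$, or symmetrically when summing over $k$; and (d) the $\epsilon(C_0\epsilon)$ modulation error of Lemma \ref{lem:zmest}. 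Under the bootstrap \eqref{eq:boot} and Lemmas \ref{lem:etabound}, \ref{lem:g}, each piece of $\mathcal{E}_k$ is $O(\epsilon\cdot C_0\epsilon)$ in $L^2(0,T)$.

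The positivity step uses Plemelj together with the intertwining $W\mathcal{H}_{\omega_*} = \sigma_3(-\Delta+\omega_*)W$ from \eqref{def:waveop} to identify $-\Im \mathsf{A}_k$ as a positive constant multiple of the Gram matrix
\begin{align*}
\int_{|\xi|^2 = r_k - \omega_*} \overline{\mathcal{F}(W^*\mathfrak{G}_{\mathbf{m}(k,n)})_+(\xi)}\,\mathcal{F}(W^*\mathfrak{G}_{\mathbf{m}(k,n')})_+(\xi)\, d\sigma(\xi),
\end{align*}
which by (H7) is positive definite; let $c_k>0$ denote its smallest eigenvalue. Computing $\frac{d}{dt}\|\mathbf{w}_k\|^2$ from the refined equation, the Hermitian part $r_k + \Re \mathsf{A}_k$ drops out, so
\begin{align*}
\frac{d}{dt}\|\mathbf{w}_k\|^2 \leq -2 c_k \|\mathbf{w}_k\|^2 + 2\|\mathcal{E}_k\|\|\mathbf{w}_k\|.
\end{align*}
Integrating on $[0,T]$, applying Cauchy--Schwarz and absorbing, $\|\mathbf{w}_k\|_{L^2(0,T)}^2 \lesssim \|\mathbf{w}_k(0)\|^2 + \|\mathcal{E}_k\|_{L^2(0,T)}^2 \lesssim \epsilon^2 + (C_0\epsilon)^2\epsilon^2$. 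Summing over $k$ gives $\sum_{\mathbf{m}\in\mathbf{R}_{\mathrm{min}}}\|\mathbf{z}^{\mathbf{m}}\|_{L^2}\leq C\epsilon(1 + C_0\epsilon)$, and choosing $C_0$ large (say $C_0 > 4C$) and then $\epsilon_0$ small enough yields the desired $\leq \tfrac{1}{2}C_0\epsilon$.

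The main obstacle is the refinement of Step 2: one must isolate the exact bilinear-in-$\mathbf{z}$ resonant contribution from $\mathfrak{F}$ and verify that it has precisely the announced form in terms of the outgoing resolvent. This is a bookkeeping exercise at the level of the expansion \eqref{expestR}--\eqref{estR}: after substituting $\widetilde{\zeta} = Z + g$ into $F$, one must check that all non-resonant quadratic monomials in $\mathbf{z}$ cancel thanks to the refined profile construction of Section \ref{sec:rf}, leaving only the pairing $\langle \sigma_3\mathfrak{G}_{\mathbf{m}(k,n)}, Z_{\mathbf{m}(k,n')}\rangle$, and that it is precisely the $-\im 0$ prescription in \eqref{def:g} (justified by Proposition \ref{prop:outgoing}) which endows this pairing with a nontrivial imaginary part accessible to (H7).
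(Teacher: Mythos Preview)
There is a genuine gap in the proposed ODE step. You claim that
\[
\im \dot{\mathbf{w}}_k = r_k \mathbf{w}_k + \mathsf{A}_k \mathbf{w}_k + \mathcal{E}_k
\]
with a \emph{constant} matrix $\mathsf{A}_k$, but a degree count rules this out. For $\mathbf{m}\in\mathbf{R}_{\mathrm{min},k}$ one has $\partial_t(\mathbf{z}^{\mathbf{m}}) = \sum_j m_{j+}\mathbf{z}^{\mathbf{m}-\mathbf{e}^{j+}}\dot z_j + \text{c.c.}$, so any contribution of homogeneity $\|\mathbf{m}'\|$ (with $\mathbf{m}'\in\mathbf{R}_{\mathrm{min},k}$, hence $\|\mathbf{m}'\|\ge 2$) to $\partial_t(\mathbf{z}^{\mathbf{m}})$ would require a term of homogeneity $\|\mathbf{m}'\|-\|\mathbf{m}\|+1\le 1$ in $\dot z_j$. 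The only such term is $-\im\lambda_j z_j$, which produces $r_k\mathbf{z}^{\mathbf{m}}$, not $\mathsf{A}_k\mathbf{w}_k$. The radiation backreaction you are after --- substituting $\widetilde\zeta=Z+g$ into the modulation equations --- enters at strictly higher degree in $\mathbf{z}$: the pairing $\langle Z_{\mathbf{m}'},\sigma_3\mathfrak G_{\mathbf{m}}\rangle$ comes multiplied by $\mathbf{z}^{\mathbf{m}'}\overline{\mathbf{z}^{\mathbf{m}}}$-type factors, not alone. Consequently $\frac{d}{dt}\|\mathbf w_k\|^2\le -2c_k\|\mathbf w_k\|^2+\cdots$ cannot hold (it would force exponential decay of $|\mathbf z^{\mathbf m}|$, while only polynomial decay is expected). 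You have also misplaced the source of the damping: it is the term $\langle\eta,D\mathcal R\,\Xi\rangle$ in the modulation identity, not the quadratic part of $\mathfrak F$, that carries the resonant coupling $\langle Z,\mathfrak G_{\mathbf m}\rangle$.

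The paper circumvents this by differentiating not $\|\mathbf w_k\|^2$ but the localized action $E(\varphi[\Theta])+\omega_* Q_0(\varphi[\Theta])$. A short computation using \eqref{eq:full_rp}, \eqref{eq:HDphi} and \eqref{eq:modnls} gives
\[
\frac{d}{dt}\bigl(E(\varphi)+\omega_* Q_0(\varphi)\bigr) = -\langle\eta, D\mathcal R\,\widetilde\Theta\rangle + \text{(integrable errors)},
\]
and after substituting $\eta\sim Z+g$ and isolating $D_{\mathbf z}\mathcal R(\im\boldsymbol\lambda\mathbf z)$, the main term is exactly $\langle Z,\sum_{\mathbf m\in\mathbf R_{\mathrm{min}+}}\im\lambda(\omega_*,\mathbf m)\mathbf z^{\mathbf m}\mathfrak G_{\mathbf m}\rangle$. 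Plemelj then yields the negative of your Gram form, so integrating in $t$ and bounding the boundary terms by orbital stability gives $\sum_{\mathbf m}\|\mathbf z^{\mathbf m}\|_{L^2}^2\lesssim \epsilon^2+\epsilon(C_0\epsilon)+\epsilon(C_0\epsilon)^2$. The point is that the energy functional has \emph{exactly} the right homogeneity so that its derivative produces $\sum|\mathbf z^{\mathbf m}|^2$ at leading order; this is what your choice of Lyapunov functional $\|\mathbf w_k\|^2$ fails to achieve. A viable ``direct'' alternative would be to differentiate $\sum_j|z_j|^2$ (not $\sum_{\mathbf m}|\mathbf z^{\mathbf m}|^2$) and show $\frac{d}{dt}\sum_j|z_j|^2\le -c\sum_{\mathbf m}|\mathbf z^{\mathbf m}|^2+\cdots$, but that requires extracting the higher-order damping in $\dot z_j$ rather than the linear-in-$\mathbf w_k$ term you wrote.
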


\begin{proof}
We start by computing the time derivative of the localized action $E(\varphi[\Theta])+\omega_*Q_0(\varphi[\Theta])$.
\begin{align}
\frac{d}{dt}\(E(\varphi)+\omega_{*}Q_0(\varphi)\) &= \< (-\Delta+\omega_*)\varphi +g(|\varphi|^2)\varphi,D\varphi \dot{\Theta}\>=\<\im D\varphi \widetilde{\Theta},D\varphi  \dot{\Theta}\>\nonumber\\&
=-\<\im D\varphi (\dot{\Theta}-\widetilde{\Theta}),D\varphi \widetilde{\Theta}\>\nonumber\\&
=-\<H\eta +F,D\varphi \widetilde{\Theta}\>+\<\im \partial_t \eta,D\varphi \widetilde{\Theta}\>\nonumber\\&
=-\<\eta,\im D^2\varphi(\widetilde{\Theta},\widetilde{\Theta})+D\mathcal{R}\widetilde{\Theta}\>-\<F,D\varphi\widetilde{\Theta}\>
+\<\eta,\im D^2\varphi(\dot{\Theta},\widetilde{\Theta})\>\nonumber\\&
=-\<\eta,D\mathcal{R}\widetilde{\Theta}\>-\<\eta,\im D^2\varphi (\dot{\Theta}-\widetilde{\Theta},\widetilde{\Theta})\>-\<F,D\varphi\widetilde{\Theta}\>,\label{eq:LocEn1}
\end{align}
where we have used \eqref{prop:full_rp} in the 1st line, $\<\im w,w\>=0$ in the 2nd line, \eqref{eq:modest} in the 3rd line, \eqref{eq:HDphi} in the 4th line.
From \eqref{F:L1est} and \eqref{Theta:L2} we have
\begin{align}\label{eq:LocEn2}
\|\<\eta,\im D^2\varphi (\dot{\Theta}-\widetilde{\Theta},\widetilde{\Theta})\>\|_{L^1}+\|\<F,D\varphi\widetilde{\Theta}\>\|_{L^1}\lesssim  \epsilon\(C_0\epsilon\)^2.
\end{align}
For the 1st term of the last line of \eqref{eq:LocEn1},
\begin{align}\label{eq:LocEn3}
-\<\eta,D\mathcal{R}\widetilde{\Theta}\>=\<\eta,D_{\mathbf{z}}\mathcal{R}(\im \boldsymbol{\lambda}\mathbf{z})\>-\<\eta,D\mathcal{R}\(\widetilde{\Theta}-(0,0,0,0,-\im \boldsymbol{\lambda}\mathbf{z})\)\>
\end{align}
where $\boldsymbol{\lambda}\mathbf{z}=(\lambda_1(\omega_*)z_1,\cdots,\lambda_N(\omega_{*})z_N)$.
Following the argument of Lemma \ref{lem:zmest}, we have
\begin{align}\label{eq:LocEn4}
\|\<\eta,D\mathcal{R}\(\widetilde{\Theta}-(0,0,0,0,-\im \boldsymbol{\lambda}\mathbf{z})\)\>\|_{L^1}\lesssim C_0^2 \epsilon^3.
\end{align}
For the 1st term in the r.h.s.\ of \eqref{eq:LocEn3},
\begin{align}\label{eq:LocEn5}
\<\eta,D_{\mathbf{z}}\mathcal{R}\( \im \boldsymbol{\lambda}\mathbf{z}\)\>=&
\<G_{-\theta,-\mathbf{y},0}\eta,\im \sum_{\mathbf{m} \in \mathbf{R}_{\mathrm{min}}}\lambda(\omega_*,\mathbf{m})\mathbf{z}^{\mathbf{m}}G_{\mathbf{m}}\>\\&+\<G_{-\theta,-\mathbf{y},0}\eta,,\im \sum_{\mathbf{m} \in \mathbf{R}_{\mathrm{min}}}\lambda(\omega_*,\mathbf{m})\mathbf{z}^{\mathbf{m}}\(G_{0,0,\mathbf{v}}G_{\mathbf{m}}-G_{\mathbf{m}}\)\>+\<\eta,D_{\mathbf{z}}\mathcal{R}_1\( \im \boldsymbol{\lambda}\mathbf{z}\)\>.\nonumber
\end{align}
By \eqref{estR} and \eqref{Linftybound}, the terms in the 2nd line of \eqref{eq:LocEn5} can be bounded by
\begin{align}\label{eq:LocEn6}
\|\<G_{-\theta,-\mathbf{y},0}\eta,,\im \sum_{\mathbf{m} \in \mathbf{R}_{\mathrm{min}}}\lambda(\omega_*,\mathbf{m})\mathbf{z}^{\mathbf{m}}\(G_{0,0,\mathbf{v}}G_{\mathbf{m}}-G_{\mathbf{m}}\)\>+\<\eta,D_{\mathbf{z}}\mathcal{R}_1\( \im \boldsymbol{\lambda}\mathbf{z}\)\>\|_{L^1}\lesssim \epsilon (C_0\epsilon)^2.
 \end{align}
 For the 1st term in the r.h.s.\ of \eqref{eq:LocEn5}, since $\mathbf{R}_{\mathrm{min}}=\{\mathbf{m},\overline{\mathbf{m}}\ |\ \lambda(\omega_{*},\mathbf{m})>\omega_*\}$, setting
 \begin{align*}
 \mathbf{R}_{\mathrm{min}+}:=\{\mathbf{m}\in \mathbf{R}_{\mathrm{min}},\ \lambda(\omega_{*},\mathbf{m})>\omega_{*}\},
 \end{align*}
  we have
 \begin{align}\nonumber
& \<G_{-\theta,-\mathbf{y},0}\eta,\im \sum_{\mathbf{m} \in \mathbf{R}_{\mathrm{min}}}\lambda(\omega_*,\mathbf{m})\mathbf{z}^{\mathbf{m}}G_{\mathbf{m}}\>\\&
=\<G_{-\theta,-\mathbf{y},0}\eta,\sum_{\substack{\mathbf{m}\in \mathbf{R}_{\mathrm{min}+}}}\( \im \lambda(\omega_*,\mathbf{m})\mathbf{z}^{\mathbf{m}}G_{\mathbf{m}}+\im \lambda(\omega_*,\overline{\mathbf{m}})\mathbf{z}^{\overline{\mathbf{m}}}G_{\overline{\mathbf{m}}}\)\>\nonumber\\&
 =\sum_{\substack{\mathbf{m}\in \mathbf{R}_{\mathrm{min}+}}}\(\<G_{-\theta,-\mathbf{y},0}\eta, \im \lambda(\omega_*,\mathbf{m})\mathbf{z}^{\mathbf{m}}G_{\mathbf{m}}\>+\<\overline{G_{-\theta,-\mathbf{y},0}\eta},\im \lambda(\omega_*,\mathbf{m})\mathbf{z}^{\mathbf{m}}G_{\overline{\mathbf{m}}}\>\)\nonumber\\&
 =\<\widetilde{G}_{-\theta,-\mathbf{y},0}\zeta,\sum_{\substack{\mathbf{m}\in \mathbf{R}_{\mathrm{min}+}
 }}\im \lambda(\omega_*,\mathbf{m})\mathbf{z}^{\mathbf{m}}\mathfrak{G}_{\mathbf{m}}\>.\label{eq:LocEn61}
 \end{align}
By \eqref{def:g}, the 1st term in the r.h.s.\ of \eqref{eq:LocEn61} can be decomposed as
\begin{align}\label{eq:LocEn7}
&\<\widetilde{G}_{-\theta,-\mathbf{y},0}\zeta,\sum_{\substack{\mathbf{m}\in \mathbf{R}_{\mathrm{min}+}
}}\im \lambda(\omega_*,\mathbf{m})\mathbf{z}^{\mathbf{m}}\mathfrak{G}_{\mathbf{m}}\>=
\<Z+g+(R-1)\widetilde{\zeta},\sum_{\mathbf{m}\in \mathbf{R}_{\mathrm{min}+}}\im \lambda(\omega_*,\mathbf{m})\mathbf{z}^{\mathbf{m}}\mathfrak{G}_{\mathbf{m}}\> .
\end{align}
By Lemmas \ref{lem:R} and \ref{lem:g} we have
\begin{align}\label{eq:LocEn8}
\|\<g+(R-1)\widetilde{\zeta},\sum_{\mathbf{m}\in \mathbf{R}_{\mathrm{min}+}}\im \lambda(\omega_*,\mathbf{m})\mathbf{z}^{\mathbf{m}}\mathfrak{G}_{\mathbf{m}}\>\|_{L^1}\lesssim \epsilon (1+C_0\epsilon)C_0\epsilon.
\end{align}
Recalling \eqref{def:Rmink}, the remaining term of r.h.s.\ of  \eqref{eq:LocEn7} can be decomposed as
\begin{align}\label{eq:LocEn9}
\<Z,\sum_{\mathbf{m}\in \mathbf{R}_{\mathrm{min}+}}\im \lambda(\omega_*,\mathbf{m})\mathbf{z}^{\mathbf{m}}\mathfrak{G}_{\mathbf{m}}\> =&\sum_{k=1}^M \sum_{\mathbf{m}^1,\mathbf{m}^2\in \mathbf{R}_{\mathrm{min},k}}r_k\<\mathbf{z}^{\mathbf{m}^1}Z_{\mathbf{m}^1},\im \mathbf{z}^{\mathbf{m}^2}\mathfrak{G}_{\mathbf{m}^2}\>
\\&+\sum_{\substack{\mathbf{m}^{1}\in \mathbf{R}_{\mathrm{min}},\mathbf{m}^{2} \in \mathbf{R}_{\mathrm{min}+}\\ \lambda(\omega_*,\mathbf{m}^1)\neq \lambda(\omega_*,\mathbf{m}^2)}}
\<\mathbf{z}^{\mathbf{m}^1}Z_{\mathbf{m}^1},\im \lambda(\omega_*,\mathbf{m}^2)\mathbf{z}^{\mathbf{m}^2}\mathfrak{G}_{\mathbf{m}^2}\>.\nonumber
\end{align}
Now, for $\mathbf{m}^1$ and $\mathbf{m}^2$ with $\lambda(\omega_*,\mathbf{m}^1)\neq \lambda(\omega_*,\mathbf{m}^2)$, we have
\begin{align*}
\frac{d}{dt}\mathbf{z}^{\mathbf{m}^1+\overline{\mathbf{m}^2}}=\im (\lambda(\omega_*,\mathbf{m}^1)-\lambda(\omega_*,\mathbf{m}^2))\(\mathbf{z}^{\mathbf{m}^1+\overline{\mathbf{m}^2}}+r_{\mathbf{m}_1,\mathbf{m}_2}\),
\end{align*}
where
\begin{align*}
r_{\mathbf{m}_1,\mathbf{m}_2}=\frac{1}{\im (\lambda(\omega_*,\mathbf{m}^1)-\lambda(\omega_*,\mathbf{m}^2))}D(\mathbf{z}^{\mathbf{m}^1+\overline{\mathbf{m}^2}})(\dot{\mathbf{z}}+\im \boldsymbol{\lambda}\mathbf{z}).
\end{align*}
Arguing as Lemma \ref{lem:zmest}, we can show
\begin{align}\label{eq:LocEn10}
\|r_{\mathbf{m}^1,\mathbf{m}^2}\|_{L^1}\lesssim \epsilon(C_0\epsilon)^2.
\end{align}
Thus, for the 2nd term in the r.h.s.\ of \eqref{eq:LocEn9},
\begin{align}
\<\mathbf{z}^{\mathbf{m}^1}Z_{\mathbf{m}^1},\im \lambda(\omega_*,\mathbf{m}^2)\mathbf{z}^{\mathbf{m}^2}\mathfrak{G}_{\mathbf{m}^2}\>
=\frac{d}{dt}A
_{\mathbf{m}^1,\mathbf{m}^2}
-
r_{\mathbf{m}^1,\mathbf{m}^2}\<Z_{\mathbf{m}^1},\im \lambda(\omega_*,\mathbf{m}^2)\mathfrak{G}_{\mathbf{m}^2}\>,\label{eq:LonEn11}
\end{align}
where
\begin{align}\label{eq:LocEn12}
A_{\mathbf{m}_1,\mathbf{m}_2}=
\frac{1}{\im (\lambda(\omega_*,\mathbf{m}^1)-\lambda(\omega_*,\mathbf{m}^2))}\<\mathbf{z}^{\mathbf{m}^1}Z_{\mathbf{m}^1},\im \lambda(\omega_*,\mathbf{m}^2)\mathbf{z}^{\mathbf{m}^2}\mathfrak{G}_{\mathbf{m}^2}[\omega_{*}]\>.
\end{align}
Finally, for the 1st term of r.h.s.\ of \eqref{eq:LocEn9}, by Plemelj formula  we have
\begin{align}\label{eq:LocEn13}
&\sum_{k=1}^M \sum_{\mathbf{m}^1,\mathbf{m}^2\in \mathbf{R}_{\mathrm{min},k}}r_k\<\mathbf{z}^{\mathbf{m}^1}Z_{\mathbf{m}^1},\im \mathbf{z}^{\mathbf{m}^2}\mathfrak{G}_{\mathbf{m}^2}[\omega_{*}]\>\\&=-\sum_{k=1}^{M}\frac{\pi}{2\sqrt{r_k-\omega_*}}\int_{|\xi|^2=r_k-\omega_*}|\sum_{\mathbf{m}\in \mathbf{R}_{\mathrm{min}},k}\mathbf{z}^{\mathbf{m}}\mathcal{F}{\(W^*\mathfrak{G}_{\mathbf{m}}\)_+}|^2\,d\sigma(\xi). \nonumber
\end{align}
Here, $W^*$ is the adjoint of the wave operator $W$ given by \eqref{def:waveop}, $\mathcal{F}$ is the usual Fourier transform and $F_+=f_1$ for $F={}^t(f_1\ f_2)$.
Since, for each $k$, the r.h.s.\ of \eqref{eq:LocEn13} is a non-negative bilinear form of $\mathbf{z}^{\mathbf{m}}$'s, by (H7), we have
\begin{align}\label{eq:LocEn14}
\sum_{k=1}^{N}\int_{|\xi|^2=r_k}|\sum_{\mathbf{m}\in \mathbf{R}_{\mathrm{min}},k}\mathbf{z}^{\mathbf{m}}\mathcal{F}{\(W^*\mathfrak{G}_{\mathbf{m}}\)_+}|^2\,d\sigma\gtrsim \sum_{\mathbf{m} \in \mathbf{R}_{\mathrm{min}}} |\mathbf{z}^{\mathbf{m}}|^2
\end{align}
Collecting all \eqref{eq:LocEn1}-\eqref{eq:LocEn14}, we have
\begin{align}\label{eq:LocEn15}
\sum_{\mathbf{m} \in \mathbf{R}_{\mathrm{min}}}\|\mathbf{z}^\mathbf{m}\|_{L^2}^2\lesssim \left[E(\varphi)+\omega_* Q_0 + A\right]_0^T + \epsilon(C_0\epsilon)^2 + \epsilon(C_0\epsilon).
\end{align}
By Orbital stability, Proposition \ref{prop:os}, we see that the 1st term in r.h.s.\ of \eqref{eq:LocEn15} can be bounded by $\lesssim \epsilon^2$.
So, we have
\begin{align}\label{eq:LocEn16}
\sum_{\mathbf{m} \in \mathbf{R}_{\mathrm{min}}}\|\mathbf{z}^\mathbf{m}\|_{L^2}^2\leq C(\epsilon^2+\epsilon (C_0\epsilon)^2+\epsilon (C_0\epsilon))
\end{align}
Thus, taking $C_0$ sufficiently large so that $C(1+C_0)\leq \frac{1}{100N}C_0^2$ and $\epsilon_0$ sufficiently small so that $C\epsilon_0\leq\frac{1}{100N}$, we have
\begin{align}\label{eq:LocEn16}
\sum_{\mathbf{m} \in \mathbf{R}_{\mathrm{min}}}\|\mathbf{z}^\mathbf{m}\|_{L^2}^2\leq \frac{1}{10N}(C_0\epsilon)^2.
\end{align}
Therefore, we have the conclusion.
\end{proof}

\begin{proof}[Proof of Proposition \ref{prop:boot}]
Taking $C_0$ larger if necessary, from Lemmas \ref{lem:etabound} and \ref{lem:FGRbound}, we have \eqref{eq:boot} with $C_0$ replaced by $C_0/2$.
\end{proof}

\section*{Acknowledgments}
C. was supported by a FRA of the University of Trieste and by the Prin 2020 project \textit{Hamiltonian and Dispersive PDEs} n. 2020XB3EFL.
M.M. was supported by the JSPS KAKENHI Grant Number 19K03579, G19KK0066A and JP17H02853.

Department of Mathematics and Geosciences,  University
of Trieste, via Valerio  12/1  Trieste, 34127  Italy.
{\it E-mail Address}: {\tt scuccagna@units.it}

Department of Mathematics and Informatics,
Graduate School of Science,
Chiba University,
Chiba 263-8522, Japan.
{\it E-mail Address}: {\tt maeda@math.s.chiba-u.ac.jp}

\end{document}